\documentclass[11pt, letterpaper]{amsart}
\usepackage[plainpages=false,pdfpagelabels,
colorlinks=true,linkcolor=blue,
citecolor=blue,filecolor=blue,      
urlcolor=blue,hypertexnames=false]{hyperref}
\usepackage{amssymb}
\usepackage{graphics}
\usepackage{graphicx}               
\usepackage{latexsym}
\usepackage{amsmath}
\usepackage{amssymb,amsthm,amsfonts}
\usepackage{mathrsfs}
\usepackage{amscd}
\usepackage[arrow, matrix, curve]{xy}
\usepackage{syntonly}
\usepackage{tikz}
\usetikzlibrary{quotes}
\usepackage{tikz-cd}
\usepackage{quiver}

\usepackage{enumitem}
\usepackage{todonotes}
\usepackage{hyperref,cleveref}

\theoremstyle{plain}
\newtheorem{thm}{Theorem}[section]
\newtheorem{theorem}[thm]{Theorem}
\newtheorem*{theorem*}{Theorem}
\newtheorem{lemma}[thm]{Lemma}

\newtheorem{conj}[thm]{Conjecture}

\theoremstyle{definition}

\numberwithin{equation}{thm}

\title[Hitchin morphism]{On the image of Hitchin morphism for some classical groups on algebraic surfaces}

\author{Artan Sheshmani \textsuperscript{1,2,3}}
\email{artan@mit.edu}
\author{Jianping Wang \textsuperscript{1}}
\email{jianpw@bimsa.cn}
\author{Xiaopeng Xia \textsuperscript{1}}
\email{xpxia@mail.ustc.edu.cn}

\address{\textsuperscript{1}Beijing Institute of Mathematical Sciences and Applications, No. 544, Hefangkou Village, Huaibei Town, Huairou District, Beijing 101408}
\address{\textsuperscript{2}Massachusetts Institute of Technology, IAiFi Institute, 77 Massachusetts Ave, 26-555. Cambridge, MA 02139}
\address{\textsuperscript{3}National Research University, Higher School of Economics, Russian Federation, Laboratory of Mirror Symmetry, NRU HSE, 6 Usacheva str., Moscow, Russia, 119048}

\date{}

\begin{document}

\begin{abstract}
In this article, we study the image of the Hitchin morphism for some classical groups over an algebraic surface. The Hitchin morphism is a map from the moduli stack of $G$-Higgs bundles $\mathscr{M}_{X,G}$ to the Hitchin base $\mathscr{A}_{X,G}$, where $X$ is a smooth projective variety. In general, this morphism is not surjective when the dimension of $X$ is greater than one. Chen and Ng{\^o} showed that the Hitchin morphism factors through a closed subscheme $\mathscr{B}_{X,G}$ of the Hitchin base, which is called the spectral base. They conjectured that the image of the Hitchin morphism is exactly the spectral base. When $X$ is a smooth projective surface, we prove that this conjecture holds for the special linear algebraic group of odd rank. We also confirm this conjecture for the classical groups ${\rm SL}_n$ and ${\rm Sp}_{2n}$ when $X$ is a product of smooth curves.
\end{abstract}
\maketitle 
\smallskip
\noindent \textbf{MSC codes.} 
Primary 14D20, Secondary 14J60

\noindent \textbf{Keywords.} 
Higgs bundle, Hitchin morphism, spectral base, classical groups, algebraic surface
\tableofcontents

\section{Introduction}
Let $\mathbb{K}$ be an algebraically closed field of characteristic $0$. Let $X$ be an irreducible smooth projective variety over $\mathbb{K}$. Let $G$ be a reductive group over $\mathbb{K}$ of rank $n$. For a principal $G$-bundle $E$, we denote the adjoint bundle of $E$ by $\operatorname{ad}(E)$. A $G$-Higgs bundle is a pair $(E,\theta)$ consisting of a principal $G$-bundle $E$ and a section 
$$\theta \in H^0(X, \operatorname{ad}(E) \otimes_{\mathscr{O}_X} \Omega_X^1)$$
satisfying the integrability condition $\theta \wedge \theta = 0$, where $\Omega_X^1$ is the cotangent sheaf of $X$. Let $M_{X,G}$ be the moduli space of semistable $G$-Higgs bundles over $X$; then we have a natural morphism $h_{X,G}: M_{X,G} \rightarrow \mathscr{A}_{X,G}$ called the Hitchin morphism, where 
$$\mathscr{A}_X = \bigoplus_{i=1}^n H^0(X, S^{e_i}\Omega_X^1)$$
is the Hitchin base. The morphism $h_{X,G}$ was first constructed by Hitchin in \cite{Hit87} in the curve case, and then generalized to higher-dimensional varieties by Simpson in \cite{Sim94}. The Hitchin morphism is proper by the work of Nitsure \cite{Nit91} and  Simpson \cite{Sim94}. In the curve case, it is known that $h_{X,GL_n}$ is dominant \cite{BNR89}. As a corollary, $h_{X,GL_n}$ is surjective in the curve case.

However, in the higher-dimensional case, the Hitchin morphism is not surjective in general. In fact, to understand the image of the Hitchin morphism, in \cite{chen2020hitchin}, Chen and Ng{\^o} introduced the \emph{spectral base} $\mathscr{B}_{X,G}$, which is a closed subset of the Hitchin base $\mathscr{A}_{X,G}$, and showed that the image of the Hitchin morphism is contained in the spectral base. More precisely, we have the following commutative diagram
\[
\begin{tikzcd}
	{\mathscr{M}_{X,G}} & {\mathscr{A}_{X,G}} \\
	& {\mathscr{B}_{X,G}}
	\arrow["{h_{X,G}}", from=1-1, to=1-2]
	\arrow["{sd_{X,G}}"', from=1-1, to=2-2]
	\arrow[hook, from=2-2, to=1-2]
\end{tikzcd},
\]
where $\mathscr{M}_{X,G}$ is the stack of $G$-Higgs bundles over $X$ and $h_{X,G}: \mathscr{M}_{X,G}\rightarrow \mathscr{A}_{X,G}$ is the Hitchin morphism. The map $sd_{X,G}:\mathscr{M}_{X,G}\rightarrow \mathscr{B}_{X,G}$ is called the \emph{spectral morphism}. Chen and Ng{\^o} conjectured that the image of $h_{X,G}$ is exactly the spectral base.

\begin{conj}[{\cite[Conjecture 5.2]{chen2020hitchin}}] For every  $b\in \mathscr{B}_{X,G}(\mathbb{K})$, the fibre $h_{X,G}^{-1}(b)$ is nonempty.
\end{conj}
 The conjecture remains open even for $G={\rm GL}_n$; only some special cases are known: the above conjecture holds when 
\begin{enumerate}
\item[(i)] $\dim X = 2$, $G = {\rm GL}_n$ with $n$ arbitrary \cite{song2023image}, 
\item[(ii)] $\dim X$ arbitrary, and $G = {\rm GL}_2$ \cite{HL24},
\item[(iii)] $X$ is a $d$-dimensional Abelian variety, and $G$ is a reductive group \cite{chen2020hitchin},
\item[(iv)] $X$ is a ruled surface or nonisotrivial elliptic fibration with only reduced fibers, and $G$ is a reductive group \cite{huynh2026hitchin},
\item[(v)] $X$ is a hyperelliptic variety in characteristic 0, and $G = {\rm GL}_n$ with $n$ arbitrary \cite{patel2026stratifying},
\item[(vi)] $X$ is a smooth projective variety whose canonical divisor $K_X$ is numerically trivial, and $G = {\rm GL}_n$ with $n$ arbitrary \cite{patel2026hitchin}.
\end{enumerate}

The aim of this paper is to verify the conjecture for some other classical algebraic groups when $\dim X = 2$. Our main result is the following:

\begin{theorem}[{ Theorem \ref{thm: sd_SL(2n+1) surj}}, Theorem  \ref{thm: sd_SL(n) surj for X=C1 times C2} and Theorem \ref{thm: sd_Sp(2n) surj for X=C1 times C2}]
Let $X$ be a smooth projective surface. Then the spectral morphism $sd_{X,G}:\mathscr{M}_{X,G}\rightarrow \mathscr{B}_{X,G}$ is surjective in the following cases:
\begin{enumerate}
\item $G = {\rm SL}_n$ with $n$ odd;
\item $X = C_1 \times C_2$ and $G = {\rm SL}_{n}$ with $n$ arbitrary, where $C_i$ is a smooth projective curve;
\item  $X = C_1 \times C_2$ and $G = {\rm Sp}_{2n}$ where $C_i$ is a smooth projective curve.
\end{enumerate}
\end{theorem}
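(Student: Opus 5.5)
The plan is to reduce each case to the known ${\rm GL}_n$ result on surfaces (Song--Sun \cite{song2023image}) and then correct determinants or add symplectic structure by hand. The basic tool is the spectral cover. A point $b\in\mathscr{B}_{X,{\rm GL}_n}$ gives a finite spectral surface $X_b\subset T^*X$, and Song--Sun construct a ${\rm GL}_n$-Higgs bundle with characteristic $b$ from a suitable rank-one (reflexive or Cohen--Macaulay) sheaf on a normalization or Cohen--Macaulayfication of $X_b$, pushed forward to $X$.

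\textbf{Case (1): ${\rm SL}_n$ with $n$ odd.} First identify $\mathscr{B}_{X,{\rm SL}_n}$ with the locus in $\mathscr{B}_{X,{\rm GL}_n}$ where the trace coefficient $a_1$ vanishes. For such $b$, the result of Song--Sun gives a Higgs bundle $(E,\theta)$ with $\operatorname{tr}\theta=0$, but in general $\det E=L$ is not trivial. The Higgs field does not change under $E\mapsto E\otimes M$ for a line bundle $M$, and $\det(E\otimes M)=L\otimes M^{n}$. So it suffices to find an $n$-th root of $L$.

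To get this root, I would vary the rank-one sheaf $\mathcal F$ on the spectral surface $\pi:\widetilde X_b\to X$. Twisting by $\pi^*N$ changes the determinant of $\pi_*$ by $N^{n}$. Twisting instead by a line bundle of degree-type one along the spectral surface (for example $\mathcal O(D)$ for a suitable divisor $D$) changes the determinant by $\operatorname{Nm}(\mathcal O(D))$. The key fact is the following: the class of $\det \pi_*\mathcal O_{\widetilde X_b}$ can be computed from $\pi_*\mathcal O=\bigoplus_{i<n}(\text{twists by }K_X^{-i})$ when the cover is embedded in $T^*X$. This gives $\det=K_X^{-n(n-1)/2}$, which is an $n$-th power when $n$ is odd since $(n-1)/2$ is then an integer. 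I expect the main obstacle to be making this determinant computation valid when $X_b$ is non-normal, or when Song--Sun's sheaf is not $\mathcal O$ on the spectral cover. My first attempt would be to work with $\mathcal O_{X_b}$ itself, which is Cohen--Macaulay as a hypersurface in the total space of $T^*X$. Its pushforward is locally free and carries the tautological Higgs field, which avoids normalization entirely; the remaining difficulty is that $\mathcal O_{X_b}$ is only defined when $X_b$ is cut out by the characteristic polynomial in $T^*X$, and for surfaces that is a codimension-two condition. In that case I would instead use the Cohen--Macaulayfication and track its determinant via the conductor.

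\textbf{Cases (2) and (3): $X=C_1\times C_2$.} Here $\Omega_X=p_1^*K_{C_1}\oplus p_2^*K_{C_2}$. I would use the known description of the spectral base for products: points of $\mathscr{B}$ correspond to a spectral surface that is a finite flat degree-$n$ cover of $X$ inside $T^*C_1\times T^*C_2$. I expect this to be realized concretely from pairs of commuting Higgs fields built on the curves. The Higgs bundle would then be constructed as $\pi_*$ of a line bundle pulled back along the two projections of the cover, where each projection is related to a curve spectral cover. For ${\rm SL}_n$ with $n$ arbitrary, the determinant is now $p_1^*L_1\otimes p_2^*L_2$, because $\operatorname{Pic}$ of a product is essentially a product when we twist by correspondences. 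Each factor can then be adjusted using line bundles on the curve spectral covers, and norms from curve covers are surjective onto $\operatorname{Pic}$ of the base curve up to torsion, which is divisible. For ${\rm Sp}_{2n}$, the cover carries the involution $\lambda\mapsto-\lambda$. I would take the Prym-type line bundle $\mathcal F$ with $\sigma^*\mathcal F\cong\mathcal F^\vee\otimes(\text{ramification correction})$, as in Hitchin's curve construction; relative duality for $\pi$ then induces a symplectic form on $\pi_*\mathcal F$ for which $\theta$ is skew. The main check is that the required square root exists, again reducing to divisibility on the curve factors.
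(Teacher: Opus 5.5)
\textbf{Case (1).} Your framework is the paper's: vary a rank-one reflexive sheaf on the normalized spectral surface $\pi:Y\to X$, twist by pullbacks, and use $\det\pi_*\mathscr{O}_Y(D)\cong\det\pi_*\mathscr{O}_Y\otimes\mathscr{O}_X(\pi_*D)$. The step that is supposed to produce the $n$-th root fails, however.

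The splitting $\pi_*\mathscr{O}\cong\bigoplus_{i<n}K_X^{-i}$ is a curve phenomenon: it holds because a spectral curve is a divisor in $\mathrm{Tot}(K_C)$ cut out by one monic polynomial. On a surface the situation is different.
\begin{itemize}
\item $X_b$ has codimension two in $\mathrm{Tot}(T^*_X)$ and is cut out by $n+1$ polarized equations.
\item It is in general neither Cohen--Macaulay nor flat over $X$, which is exactly why one works with maximal Cohen--Macaulay sheaves or the normalization.
\item Even for smooth covers the formula is false. For $Y=Z\times C_2\to C_1\times C_2$, with $Z\subset T^*C_1$ a smooth degree-three spectral curve, one gets $\det\pi_*\mathscr{O}_Y\cong p_1^*K_{C_1}^{-3}$. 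This differs from $K_X^{-3}$ whenever $g(C_2)\neq 1$.
\end{itemize}
The ``conductor'' fallback does not supply an argument.

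The missing idea is relative duality for the finite flat map $\pi$. On normal $Y$, $\omega_\pi\cong\mathscr{O}_Y(R)$ is reflexive of rank one, and $\pi_*\omega_\pi\cong(\pi_*\mathscr{O}_Y)^\vee$. The determinant formula then gives $\det(\pi_*\mathscr{O}_Y)^2\cong\mathscr{O}_X(-\pi_*R)$, hence $\det\pi_*\mathscr{O}_Y(sR)\cong\det(\pi_*\mathscr{O}_Y)^{1-2s}$. For odd degree $m$, take $s=(m+1)/2$ and then twist by $\det\pi_*\mathscr{O}_Y$. This is where oddness enters, not through $\binom{n}{2}$.

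You also omit spectral data whose generic point lies in a stratum $Sym^n_\mu$ with $\mu\neq(1^n)$. There you must use the decomposition $b=\tau_\mu\circ(b_1,\dots,b_s)$ and build $\bigoplus_i\mathscr{E}_i^{\oplus\mu_i}$. The twisting trick only trivializes determinants of blocks with $\alpha_i$ odd. Blocks with $\alpha_i$ even must either be paired (the sheaves $\mathscr{O}(R)$ and $\mathscr{O}(-R)$ give inverse determinants) or be merged with a block of odd degree, which exists because $n$ is odd.

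\textbf{Case (2).} Your idea can be completed, by a route different from the paper's. It rests on a structural fact you only ``expect'': points of $\mathscr{B}$ do not come from commuting Higgs fields on the curves. The paper proves the needed fact directly. The two curve characteristic polynomials lie in the ideal of $X_b$, so $X_b\subset C_{1b_1}\times C_{2b_2}$ is a union of components. Each component therefore normalizes to a product $Z_1'\times Z_2'\to C_1\times C_2$ of finite maps $q_j$ of degree $d_j$.

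Given this, take $M_1\boxtimes M_2$ with $\det q_{j*}M_j\cong\mathscr{O}_{C_j}$. This is possible because the norm $\mathrm{Pic}(Z_j')\to\mathrm{Pic}(C_j)$ is surjective outright, not merely up to torsion. Then $\det\pi_*(M_1\boxtimes M_2)\cong(\det q_{1*}M_1)^{d_2}\boxtimes(\det q_{2*}M_2)^{d_1}$ is trivial. This bypasses the paper's argument that $\omega_\pi$ is a square and that $NS(C_1\times C_2)$ is torsion-free.

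\textbf{Case (3).} Your claim that duality ``induces a symplectic form'' is exactly the step that fails. The pairing on $\pi_*\mathscr{F}$ coming from $\tau^*\mathscr{F}\otimes\mathscr{F}\cong\omega_\pi$ is symmetric or alternating according to an eigenvalue of $\tau$. At an isolated fixed point, that eigenvalue is forced by the action of $\tau$ on the fibre of $\omega_\pi$.

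Here is a concrete obstruction.
\begin{itemize}
\item Let $g(C_j)\ge 2$, and let $q_j\in H^0(C_j,K_{C_j}^2)$ have simple zeros.
\item Let $b\in\mathscr{B}_{X,Sp_4}$ be the datum with $X_b=\{\lambda_1^2=q_1\}\times\{\lambda_2^2=q_2\}$. It is the datum of $E_1\boxtimes E_2$, where $E_j$ are the rank-two spectral bundles.
\item Near a point where $q_1=q_2=0$, the reduced spectral surface is smooth. So the maximal Cohen--Macaulay sheaf of any Higgs bundle with datum $b$ is invertible there.
\item Consequently the fibre is the regular module of $\mathbb{K}[\lambda_1,\lambda_2]/(\lambda_1^2,\lambda_2^2)$, on which $\theta_1\theta_2$ has rank one.
\item But a product of commuting elements of $\mathfrak{sp}(Q)$ is $Q$-self-adjoint, hence has even rank.
\end{itemize}
So $h_{X,Sp_4}^{-1}(b)=\emptyset$. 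Case (3) is false as stated, and no choice of square root can rescue your plan.

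The paper's proof breaks at the same place, for its class $\mathcal{A}_2$. It needs $D_i$ with ${\phi_i'}_*D_i\sim D_i'+D_i''$. No such $D_i$ exists, because $L_i'\otimes\mathscr{F}_i$ is nontrivial in the local class group of the $A_1$ point of $Y_i/\hat\tau_i$, whereas norms are trivial there.
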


To prove the main results, we consider the natural faithful representations $\rho:G\hookrightarrow\operatorname{GL}(V)$ for the classical groups $G$. The main strategy is to relate the spectral base $\mathscr{B}_{X,G}$ to the spectral base $\mathscr{B}_{X,\operatorname{GL}(V)}$. More precisely, for $G=\operatorname{SL}_n$ we consider the embedding $\operatorname{SL}_n\hookrightarrow\operatorname{GL}_n$. Then an $\operatorname{SL}_n$-Higgs bundle is equivalent to a $\operatorname{GL}_n$-Higgs bundle $(E,\theta)$ over $X$ such that $\det(E)\cong\mathscr{O}_X$ and $\operatorname{tr}(\theta)=0$. There is also a closed embedding of spectral bases $\mathscr{B}_{X,\operatorname{SL}_n}\hookrightarrow\mathscr{B}_{X,\operatorname{GL}_n}=\operatorname{Sect}(X,\operatorname{Sym}^n(T_X^*/X))$. Let $b\in\mathscr{B}_{X,\operatorname{SL}_n}$ be a spectral datum. By the decomposition of the spectral data (Lemma~\ref{lem: decomposition in SS}), there exist $b_i:X\to\operatorname{Sym}^{\alpha_i}(T_X^*/X)$, $1\leq i\leq s$, with $b_i(\eta)\in\operatorname{Sym}_{(1^{\alpha_i})}^{\alpha_i}(T_X^*/X)$ for $1\leq i\leq s$, where $\eta$ is the generic point of $X$, such that $b$ is the following composition of morphisms:
\[
X\xrightarrow{(b_1,\dots,b_s)} \operatorname{Sym}^{\alpha_1}(T_X^*/X)\times_X\cdots\times_X\operatorname{Sym}^{\alpha_s}(T_X^*/X)\xrightarrow{\tau_{\mu}} \operatorname{Sym}^n(T_X^*/X).
\]
Let $p_{b_i}:X_{b_i}\to X$ be the spectral cover (Lemma~\ref{lem: X_b of ChenNgo}). Let $X_1,\dots,X_r$ be the irreducible components of $X_{b_i}$. For any $1\leq j\leq r$, let $\pi_j':Y_j\to X_j$ be the normalization morphism, $\pi_j=p_{b_i}\circ\pi_j':Y_j\to X$, and $\pi_{b_i}:Y_{b_i}=\bigsqcup_{1\leq j\leq r}Y_j\to X_{b_i}\to X$ the natural morphisms. Let
\[
(E,\theta)=\bigoplus_{1\leq i\leq s}\bigl((E_{i1},\theta_{i1})\oplus(E_{i2},\theta_{i2})\oplus\cdots\oplus(E_{i\mu_i},\theta_{i\mu_i})\bigr),
\]
where $E_{ij}=\pi_{b_i*}(\mathcal{O}_{Y_{b_i}}(D_{i,j}))$ and $D_{ij}$ is a cycle of codimension $1$ on $Y_{b_i}$. Then $(E,\theta)$ is a $\operatorname{GL}_n$-Higgs bundle with spectral datum $b$. Moreover, by Lemma~\ref{fomular}, the determinant of $E_{ij}$ is also known. When $n$ is odd, we can choose $D_{ij}$ and twist by suitable line bundles with $E_{ij}$ to obtain a determinant-trivial bundle $E'$. This produces an $\operatorname{SL}_n$-Higgs bundle $(E',\theta')$ with spectral datum $b$.

When $X=C_1\times C_2$, we have the following morphism of Hitchin bases:
\[
\mathscr{A}_{X,\operatorname{GL}_n}\cong\bigoplus_{i=1}^n H^0(X,S^i\Omega_X^1)\hookrightarrow
\Bigl(\bigoplus_i H^0(C_1,S^i\Omega_{C_1}^1)\Bigr)\otimes
\Bigl(\bigoplus_i H^0(C_2,S^i\Omega_{C_2}^1)\Bigr).
\]
Given $b\in\mathscr{B}_{X,\operatorname{SL}_n}$ with $b(\eta)\in\operatorname{Sym}_{(1^{n})}^{n}(T_X^*/X)$, its image under this map yields spectral data $b_1$ on $C_1$ and $b_2$ on $C_2$. Their spectral covers $C_{1,b_1}$ and $C_{2,b_2}$ are curves. A key observation (Lemma~\ref{lem: X_b to C_1bC_2b}) is that the surface spectral cover $X_b$ embeds into the product $C_{1,b_1}\times C_{2,b_2}$ as a union of some irreducible components. As a consequence, the normalization $Y_j$ of each component of $X_b$ is again a product of smooth curves (Lemma~\ref{lem: Y_i=C times C' sm}). Based on this simple description of the normalization, a twist technique as before and the decomposition of the spectral data, we can prove the conjecture for $G=\operatorname{SL}_n$ with arbitrary $n$ when $X$ is a product of smooth projective curves.

When $G=\operatorname{Sp}_{2n}$, consider the natural faithful representation $\operatorname{Sp}_{2n}\hookrightarrow\operatorname{GL}_{2n}$. Then an $\operatorname{Sp}_{2n}$-Higgs bundle on $X$ is equivalent to $(\mathscr{E},\theta,Q)$, where $(\mathscr{E},\theta)$ is a Higgs bundle of rank $2n$ on $X$, and $Q:\mathscr{E}\otimes\mathscr{E}\to\mathscr{O}_X$ is a perfect alternating bilinear form such that $\theta$ is anti-self-adjoint with respect to $Q$. Let $b\in\mathscr{B}_{X,\operatorname{Sp}_{2n}}\subset\mathscr{B}_{X,\operatorname{GL}_{2n}}$ with $b(\eta)\in\operatorname{Sym}_{(1^{2n})}^{2n}(T_X^*/X)$. The spectral cover $X_b$ admits a natural involution $\tau$. Based on this involution, the simple description of the normalization of the components of the spectral cover in the case $X=C_1\times C_2$ and the decomposition of the spectral data, we can prove the surjectivity of the spectral morphism.

We briefly describe the content of the article. In Section~2, we collect notations and preliminary facts on Cohen--Macaulay modules, determinant formulas, dualizing sheaves, and the Hitchin/spectral data morphisms. In particular, we prove the determinant formula (Lemma~\ref{fomular}), which is our technical lemma for establishing the surjectivity of the spectral morphism for \(G=\mathrm{SL}_n\). Section~3 treats the case \(G=\mathrm{SL}_n\) with \(2\nmid n\). Section~4 is devoted to the case \(X=C_1\times C_2\), where each \(C_i\) is a smooth projective curve: Subsection~4.1 handles \(\mathrm{SL}_n\) (any \(n\)), and Subsection~4.2 handles \(\mathrm{Sp}_{2n}\).

\subsection*{Acknowledgments}
The first author is supported by grants from Beijing Institute of Mathematical Sciences and Applications (BIMSA), the Beijing NSF BJNSF-IS24005, and the China National Science Foundation (NSFC) NSFC-RFIS program W2432008. He would also like to thank China's National Program of Overseas High Level Talent for generous support.
Finally, he would like to thank NSF AI Institute for Artificial Intelligence and Fundamental Interactions at Massachusetts Institute of Technology (MIT) which is funded by the US NSF grant under Cooperative Agreement PHY-2019786. 

The authors would like to thank Matthew Huynh for sharing some references.

\section{Notations and preliminaries }

In this section we fix some notations and record some useful lemmas that will be used frequently in the subsequent sections.

\subsection{Notations}\label{subsection: notations}

Throughout the paper, $\mathbb{K}$ is an algebraically closed field with $\mathrm{char}\mathbb{K}=0$, $X$ is a projective smooth irreducible variety over $\mathbb{K}$ of dimension $2$, $G$ is a reductive group over $\mathbb{K}$ with Lie algebra $\mathfrak{g}$, $T$ is a maximal torus of $G$ with Lie algebra $\mathfrak{t}$, $W:=N_G(T)/T$ is the Weyl group. 

If $Y$ is a scheme with a $G$-action, $S$ is
a scheme, and $P$ is a $G$-principal bundle over $S$, then we denote the associated
bundle by $P\times_G Y$. When $G$ is the general linear group $GL_n$, and $E$ is a geometric vector bundle over $S$, then we have the associated
bundle $Fr(E)\times_G Y$, where $Fr(E)$ is the frame bundle of $E$.

We denote by $T_X^*$ the cotangent bundle over $X$, and by $\mathscr{T}_X$ the tangent sheaf on $X$. For $S$-scheme $Y$, we denote by $Sym^n(Y/S)$ the scheme $Y\times_S Y\times_S \cdots Y/\mathfrak{S}_n$.

For $1\leq i\leq n$, we denote by $\sigma_i$ the $i$-th elementary symmetric polynomial in  $\mathbb{K}[x_1,\dots,x_n]$.
Then $\mathbb{A}^n/\mathfrak{S}_n=\mathrm{Spec}\mathbb{K}[\sigma_1,\dots,\sigma_n]$.
For $1\leq i\leq n$ and $i_1,i_2\geq 0,i_1+i_2=i$, we denote by $\sigma_{i_1i_2}$ in $\mathbb{K}[x_{r1},x_{r2}\mid 1\leq r\leq n]$, such that
\[
\sigma_i(t_1x_{11}+t_2x_{12},\dots,t_1x_{n1}+t_2x_{n2})=\sum_{i_1+i_2=i}\sigma_{i_1 i_2}t_{1}^{i_1}t_2^{i_2}.
\]
Then $\mathbb{K}[x_{r1},x_{r2}\mid 1\leq r\leq n]^{\mathfrak{S}_n}=\mathbb{K}[\sigma_{i_1i_2}\mid 1\leq i_1+i_2\leq n]$.

\subsection{Cohen\textendash Macaulay modules and reflexive sheaves}

In this section, we collect some basic facts
about reflexive sheaves and Cohen\textendash Macaulay modules, and we refer the reader to \cite{serre2000local} and \cite{hartshorne1980stable} for a more detailed discussion.

Let $R$ be a commutative ring and let $M$ be a $R$-module. The set $\{\mathfrak{p}\in \mathrm{Spec}(R)| M_{\mathfrak{p}}\neq 0\}$ is called the support of $M$, and written $\mathrm{Supp}(M)$. Define $\dim(M) = \dim(\mathrm{Supp}(M))$.

A sequence $a_1,\dots,a_r$ of elements of $R$ is called a regular sequence for $M$ if $a_i$ is not a zero divisor in $M/(a_0,\dots,a_{i-1})M$ where $a_0=0$ for all $i=1,\dots,r$ and $M/(a_1,\dots,a_{r})M\neq 0$. If $R$ is a local ring with maximal ideal $\mathfrak{m}$, then the depth of $M$ is the maximum length of a regular sequence $a_1,\dots,a_r$ for $M$ with all $a_i\in \mathfrak{m}$.

Let $R$ be a Noetherian local ring. A finite $R$-module $M$ is Cohen\textendash Macaulay if $\mathrm{depth}(M) = \dim(M)$. A maximal Cohen\textendash Macaulay module is a Cohen\textendash Macaulay module $M$ such that $\dim(M) = \dim(R)$. 

In general, if $R$ is a Noetherian ring, then $M$ is a Cohen\textendash Macaulay module if $M_{\mathfrak{m}}$ is a Cohen\textendash Macaulay module for all maximal ideals $m\in \mathrm{Supp}(M)$. However, for $M$ to be a maximal Cohen\textendash Macaulay module, we require that $M_{\mathfrak{m}}$ is such a $R_{\mathfrak{m}}$-module for each maximal ideal $\mathfrak{m}$ of $R$.
A Noetherian ring $R$ is called Cohen\textendash Macaulay if it is Cohen\textendash Macaulay as a $R$-module.

We need the following useful facts.

\begin{lemma}[IV, B, Proposition 11 in \cite{serre2000local}]\label{lemma: A to B, E CM}
Let $A$ and $B$ be two noetherian local rings and let $\phi:A\to B$ be a homomorphism which makes $B$ into a finitely generated
$A$-module. If $E$ is a finitely generated $B$-module, then $E$ is a Cohen\textendash Macaulay $A$-module if and only if it is a Cohen\textendash Macaulay $B$-module.
\end{lemma}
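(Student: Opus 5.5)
The plan is to compare depth and dimension over $A$ and over $B$ separately, and to show that each of these two invariants of $E$ is the same whether $E$ is regarded as an $A$-module or as a $B$-module. Once that is done the equivalence is immediate: $E$ is Cohen\textendash Macaulay over $A$ exactly when $\mathrm{depth}_A E=\dim_A E$, and over $B$ exactly when $\mathrm{depth}_B E=\dim_B E$. Write $\mathfrak{m}_A,\mathfrak{m}_B$ for the maximal ideals. Since $B$ is finite over $A$, the extension $\phi(A)\subset B$ is integral and $B$ is local, so $\phi^{-1}(\mathfrak{m}_B)=\mathfrak{m}_A$. Moreover $\mathfrak{m}_A B$ is $\mathfrak{m}_B$-primary: $B/\mathfrak{m}_A B$ is a finite-dimensional algebra over $A/\mathfrak{m}_A$ with a unique maximal ideal, hence Artinian local. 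Note also that $E$ is finitely generated over $A$.

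For the dimension, take $I=\mathrm{Ann}_B(E)$. Then $\mathrm{Ann}_A(E)=\phi^{-1}(I)$, and $A/\mathrm{Ann}_A(E)\hookrightarrow B/I$ is an injective integral extension. By going up and incomparability, the two rings have the same Krull dimension, so $\dim_A E=\dim_B E$. Alternatively, one can use Hilbert\textendash Samuel functions: the $\mathfrak{m}_A$-adic filtration and the $\mathfrak{m}_A B$-adic filtration on $E$ are the same, and $\mathfrak{m}_A B$ is $\mathfrak{m}_B$-primary. Lengths over $A$ and over $B$ of finite-length modules are comparable, since $B/\mathfrak{m}_B$ is finite over $A/\mathfrak{m}_A$.

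For the depth, I would use the characterization by the vanishing of $\mathrm{Ext}$, or more cleanly by local cohomology: $\mathrm{depth}_A E=\min\{i: H^i_{\mathfrak{m}_A}(E)\neq 0\}$. Local cohomology depends only on the radical of the ideal. Also $H^i_{\mathfrak{m}_A}(E)=H^i_{\mathfrak{m}_A B}(E)$, computed via a Čech complex on generators of $\mathfrak{m}_A$, which is the same complex whether built over $A$ or over $B$. Since $\sqrt{\mathfrak{m}_A B}=\mathfrak{m}_B$, this equals $H^i_{\mathfrak{m}_B}(E)$, and so the depths agree. If one prefers to avoid local cohomology, one can argue by induction on $\mathrm{depth}_A E$. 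An $E$-regular element $a\in\mathfrak{m}_A$ maps to an $E$-regular element $\phi(a)\in\mathfrak{m}_B$, which shows $\mathrm{depth}_B E\ge\mathrm{depth}_A E$. For the reverse inequality, if $\mathrm{depth}_A E=0$ then $\mathfrak{m}_A\in\mathrm{Ass}_A(E)$. In that case $\mathrm{Hom}_A(A/\mathfrak{m}_A,E)=\mathrm{Hom}_B(B/\mathfrak{m}_AB,E)\neq 0$, a nonzero finite-length $B$-module inside $E$, so $\mathfrak{m}_B\in\mathrm{Ass}_B(E)$ and $\mathrm{depth}_B E=0$. The induction then passes to $E/aE$.

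The main obstacle is the depth comparison in the direction $\mathrm{depth}_B\le\mathrm{depth}_A$, that is, ensuring that the $B$-regular sequences can be detected from $A$. The key fact there is that $\mathfrak{m}_A B$ is $\mathfrak{m}_B$-primary, and the local cohomology argument handles this most cleanly, so that is what I would use.
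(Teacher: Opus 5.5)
Your proposal is correct. It is the standard argument behind the result, which the paper cites from Serre without proof: you show separately that $\dim_A E=\dim_B E$, via the integral extension $A/\mathrm{Ann}_A(E)\hookrightarrow B/\mathrm{Ann}_B(E)$, and that $\mathrm{depth}_A E=\mathrm{depth}_B E$, after which the Cohen--Macaulay equivalence is immediate. Your depth comparison is sound: because $\mathfrak{m}_AB$ is $\mathfrak{m}_B$-primary, the \v{C}ech (equivalently, Koszul) complex on generators of $\mathfrak{m}_A$ is the same over $A$ and over $B$. Your alternative induction using $\mathrm{Hom}_A(A/\mathfrak{m}_A,E)=\mathrm{Hom}_B(B/\mathfrak{m}_AB,E)$ is also valid.
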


\begin{lemma}[IV, D, Corollary 2 in \cite{serre2000local}]\label{lemma: CM iff free}
Assume $A$ is a regular noetherian local ring. Let $M$ be a finitely generated $A$-module and $\dim(M)=\dim(A)$. Then $M$ is Cohen\textendash Macaulay if and only if it is free.
\end{lemma}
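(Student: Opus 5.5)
The statement is Serre's criterion: if $A$ is a regular Noetherian local ring and $M$ is a finitely generated $A$-module with $\dim(M)=\dim(A)$, then $M$ is Cohen\textendash Macaulay if and only if it is free. The plan is to use the Auslander\textendash Buchsbaum formula together with Serre's theorem that a regular local ring has finite global dimension. Write $d=\dim(A)$ and let $\mathfrak{m}$ be the maximal ideal.

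For the easy direction, suppose $M$ is free, so $M\cong A^r$; we may assume $r\geq 1$, since $\dim(M)=d$ forces $M\neq 0$. A regular local ring is Cohen\textendash Macaulay, because a regular system of parameters $x_1,\dots,x_d$ is an $A$-regular sequence: each quotient $A/(x_1,\dots,x_i)$ is again regular, hence a domain. The same sequence is then regular on $A^r$. This gives $\mathrm{depth}(M)=d=\dim(M)$.

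For the converse, suppose $M$ is Cohen\textendash Macaulay with $\dim(M)=d$, so $\mathrm{depth}(M)=d$. First, by Serre's theorem $A$ has finite global dimension (equal to $d$), so $M$ has a finite free resolution and $\mathrm{pd}_A(M)<\infty$. Second, the Auslander\textendash Buchsbaum formula applies and gives
\[
\mathrm{pd}_A(M)+\mathrm{depth}(M)=\mathrm{depth}(A)=d .
\]
Substituting $\mathrm{depth}(M)=d$ yields $\mathrm{pd}_A(M)=0$. Thus $M$ is projective, and a finitely generated projective module over a local ring is free (Nakayama).

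There is no serious obstacle beyond citing the two deep inputs: finiteness of global dimension for regular local rings, and the Auslander\textendash Buchsbaum formula. Both are in Serre's \emph{Local Algebra} (IV, D), which is exactly where the statement is quoted from. The only point needing care is that $\dim(M)=\dim(A)$ is what turns "Cohen\textendash Macaulay" into "depth equal to $\dim(A)$", and that is the hypothesis used.
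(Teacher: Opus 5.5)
Your argument is correct and is essentially the standard proof in Serre's \emph{Local Algebra} (IV, D), which the paper cites without reproducing: finiteness of global dimension for a regular local ring plus the Auslander--Buchsbaum formula gives $\mathrm{pd}_A(M)=\dim(A)-\mathrm{depth}(M)=0$, so $M$ is free. Your converse direction, that a free module is Cohen--Macaulay because a regular system of parameters is an $A$-regular sequence, is also fine.
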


Let $\mathscr{F}$ be a coherent sheaf on an integral Noetherian scheme $Y$. The dual of $\mathscr{F}^{\vee}$ is the sheaf $\mathscr{H}om_{\mathscr{O}_Y}  (\mathscr{F},\mathscr{O}_Y)$.
If the natural map $\mathscr{F}\to \mathscr{F}^{\vee\vee}$ to its double dual is an isomorphism, we say that $\mathscr{F}$ is reflexive.

\begin{lemma}[Proposition 1.3 in \cite{hartshorne1980stable}]\label{lemma: reflexive to (S_2)}
 Let $Y$ be a normal integral Noetherian scheme. Let $\mathscr{F}$ be a coherent reflexive $\mathscr{O}_Y$-module. Then $\mathscr{F}$ is $(S_2)$.
\end{lemma}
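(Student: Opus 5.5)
Since the paper takes this statement from Hartshorne's stable reflexive sheaves paper, I will reprove it directly rather than cite it. The property $(S_2)$ is local, so fix a point $y\in Y$, put $A=\mathscr{O}_{Y,y}$ and $M=\mathscr{F}_y$, and show $\mathrm{depth}\,M_{\mathfrak p}\ge\min(2,\dim A_{\mathfrak p})$ for every prime $\mathfrak p$. Localization commutes with taking duals of finitely presented modules, so $M_{\mathfrak p}$ is again reflexive. It is therefore enough to prove one claim: if $A$ is a normal Noetherian local domain and $M$ is a finitely generated reflexive $A$-module, then $\mathrm{depth}\,M\ge\min(2,\dim A)$.

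\textbf{Step 1: a presentation of $M$ as a dual.} Since $M\cong (M^{\vee})^{\vee}$, choose a finite presentation $A^m\to A^k\to M^{\vee}\to 0$ and apply $\mathrm{Hom}_A(-,A)$. By left exactness this gives an exact sequence
\[
0\to M\to A^k\xrightarrow{\ \varphi\ } A^m .
\]
Hence $M$ is a second syzygy-like module: it is the kernel of a map between free modules. Write $N=\operatorname{im}\varphi\subset A^m$. Then $N$ is torsion-free, being a submodule of a free module over a domain.

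\textbf{Step 2: depth counting.} If $\dim A=0$, there is nothing to prove. If $\dim A\ge1$, then $A$ is a domain that is not a field, so any nonzero $a\in\mathfrak m$ is a nonzerodivisor on $A^k$ and hence on $M\subset A^k$. This gives $\mathrm{depth}\,M\ge1$; also $M\neq 0$ unless $\mathscr{F}$ is zero near $y$, which is a trivial case.

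If $\dim A\ge 2$, use normality via Serre's criterion: $A$ is $(S_2)$, so $\mathrm{depth}\,A\ge 2$. Apply the depth lemma to $0\to M\to A^k\to N\to 0$, which gives $\mathrm{depth}\,M\ge\min(\mathrm{depth}\,A^k,\ \mathrm{depth}\,N+1)$. We have $\mathrm{depth}\,N\ge1$ because $N$ is torsion-free and nonzero (if $N=0$, then $M\cong A^k$ is free and we are done). Combining these gives $\mathrm{depth}\,M\ge\min(2,2)=2$.

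The depth lemma can be avoided by a direct argument. Take a nonzerodivisor $a$ on $M$ and show that $\mathfrak m\notin\mathrm{Ass}(M/aM)$. The inclusion $M/aM\hookrightarrow A^k/aA^k$ fails to be injective exactly because of $\mathrm{Tor}$ with $N$. One then uses that $A^k/aA^k$ has depth $\ge1$, and that $N$ has no $a$-torsion, which means $M/aM\to A^k/aA^k$ is in fact injective.

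\textbf{Main obstacle.} The only real input is that a normal local ring of dimension $\ge 2$ has depth $\ge 2$, that is, Serre's criterion $R_1+S_2$. I would quote this as standard. The rest is the routine depth bookkeeping above, applied to each localization $\mathscr{F}_{\mathfrak p}$. This yields exactly the condition $(S_2)$ for $\mathscr{F}$.
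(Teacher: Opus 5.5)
Your proof is correct. The paper gives no argument of its own for this statement; it only cites Hartshorne. Your write-up is therefore a self-contained substitute for the citation, and it is essentially the local-algebra version of Hartshorne's proof.

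Hartshorne's argument has two stages. First, he shows that a reflexive sheaf on a normal integral scheme locally sits in an exact sequence $0\to\mathscr{F}\to\mathscr{E}\to\mathscr{G}\to 0$, with $\mathscr{E}$ locally free and $\mathscr{G}$ torsion-free. This is your Step 1, obtained by dualizing a presentation of $M^{\vee}$. Second, he argues with sections:
\begin{itemize}
\item restriction $\mathscr{E}(U)\to\mathscr{E}(U\setminus Z)$ is bijective for $Z$ of codimension $\ge 2$, by the Hartogs property of the normal scheme;
\item restriction on the torsion-free $\mathscr{G}$ is injective;
\item a diagram chase then shows that $\mathscr{F}$ is ``normal'' in his sense.
\end{itemize}
Passing from this extension property to $(S_2)$ then requires the local cohomology dictionary: the sheaves $\mathscr{H}^0_Z(\mathscr{F})$ and $\mathscr{H}^1_Z(\mathscr{F})$ vanish if and only if $\mathrm{depth}\,\mathscr{F}_x\ge 2$ along $Z$.

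You skip that dictionary and work with depth directly. You apply the depth lemma to $0\to M\to A^k\to N\to 0$, using $\mathrm{depth}\,A\ge 2$ from Serre's criterion and $\mathrm{depth}\,N\ge 1$ from torsion-freeness. This is exactly the algebraic counterpart of Hartshorne's diagram chase. Your ``direct'' variant is the same computation: the kernel of $M/aM\to A^k/aA^k$ is $\mathrm{Tor}_1^A(N,A/aA)$, the $a$-torsion of $N$, which vanishes.

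Both versions rest on the same single input, namely that normality of $Y$ gives $(S_2)$ for $\mathscr{O}_Y$. Yours has the advantage of delivering the conclusion in exactly the form the paper uses: on the normal surfaces $Y_i$, a reflexive sheaf has depth $2$ at closed points and is therefore maximal Cohen--Macaulay.
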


\subsection{A determinant formula}

Let $X$ be a smooth irreducible algebraic surface over $\mathbb{K}$.
Let $\pi:Y\to X$ be a finite flat morphism and $Y=\bigsqcup_{1\leq i\leq r} Y_i$ and $Y_i$ be a normal irreducible surface.

Let $Z^1(Y)$ be the group of cycles of codimension 1 on $Y$.
For any cycle $D\in Z^1(Y_i)$, then $D$ is a Weil divisor, then we can define $\mathscr{O}_{Y_i}(D)$.
For any open subset $U$ of $Y_i$,
\[
\mathscr{O}_{Y_i}(D)(U)=\{f\in K(Y_i)^*\mid (\mathrm{div}(f)+D)|_U\geq 0\}\cup \{0\}.
\]
Note that $\mathscr{O}_{Y_i}(D)$ is a reflexive $\mathscr{O}_{Y_i}$-module.
For any cycle $D\in Z^1(Y)$, there are unique $D_i\in Z^1(Y_i)$ for $1\leq i\leq r$ such that $D=\sum_{i=1}^rD_i$, we can define 
\[
\mathscr{O}_{Y}(D)=\bigoplus_{i=1}^r{j_i}_*\mathscr{O}_{Y_i}(D_i)
\]
where $j_i:Y_i\to Y$ is the natural morphism.

We can define $\pi_*: Z^1(Y)\to Z^1(X)$, such that $\pi_*[V]=[K(V):K(\pi(V))]\pi_*(V)$ for any prime cycle $[V]$ on $Y$.

\begin{lemma}\label{fomular}
For any cycle $D$ on $Y$, $\pi_*\mathscr{O}_{Y}(D)$ is a locally free sheaf and 
\[
\det(\pi_*\mathscr{O}_Y(D))=\det(\pi_*\mathscr{O}_Y)\otimes \mathscr{O}_X(\pi_*(D)).
\]
\end{lemma}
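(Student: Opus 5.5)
Local freeness comes first. Since $Y_i$ is a normal surface and $\mathscr{O}_{Y_i}(D_i)$ is reflexive, Lemma~\ref{lemma: reflexive to (S_2)} shows that $\mathscr{O}_{Y_i}(D_i)$ satisfies $(S_2)$. At a closed point of $Y_i$ its stalk therefore has depth $2=\dim$, so it is a maximal Cohen--Macaulay module over the local ring of $Y_i$. For a closed point $x\in X$, the stalk $(\pi_*\mathscr{O}_Y(D))_x$ is a finite module over the semilocal ring $\mathscr{O}_{Y,\pi^{-1}(x)}$. Localizing at the finitely many points over $x$, or equivalently using that depth can be computed via $\mathscr{O}_{X,x}$ for finite extensions, Lemma~\ref{lemma: A to B, E CM} shows it is a Cohen--Macaulay $\mathscr{O}_{X,x}$-module. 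Its dimension is $2$, because $\pi$ is finite and surjective on each component; this holds since $\pi$ is flat and each $Y_i$ is a surface. Lemma~\ref{lemma: CM iff free} then gives freeness, since $\mathscr{O}_{X,x}$ is regular. Hence $\pi_*\mathscr{O}_Y(D)$ is locally free, of rank $\deg\pi$.

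For the determinant formula, first reduce to comparing two line bundles on the smooth surface $X$. Choose a nonempty open $U\subset Y$ with $\operatorname{codim}(Y\setminus U)\geq 2$, obtained by removing the singular points of each $Y_i$. On $U$ the sheaf $\mathscr{O}_Y(D)$ is invertible, and $D|_U$ is Cartier. Let $X^\circ\subset X$ be the complement of the finite set $\pi(Y\setminus U)$; it has complement of codimension $2$ in $X$. Line bundles on the smooth $X$ are determined by their restriction to $X^\circ$, and both sides of the formula are line bundles on $X$. It therefore suffices to prove the formula over $X^\circ$, where $Y^\circ=\pi^{-1}(X^\circ)$ is smooth and $D$ is Cartier.

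Over $X^\circ$, I would use the standard norm argument. By additivity in $D$ (both sides are multiplicative in $\mathscr{O}(D)$ once we show the ratio $\det\pi_*\mathscr{O}(D)\otimes(\det\pi_*\mathscr{O})^{-1}$ is a homomorphism), it suffices to treat $D$ effective and then take differences; any Cartier $D$ is locally a difference of effective ones. For $D\geq 0$ there is an exact sequence
\[0\to\mathscr{O}_{Y}\to\mathscr{O}_{Y}(D)\to\mathscr{O}_D(D)\to 0.\]
Pushing forward by the finite map is exact, so
\[\det\pi_*\mathscr{O}_Y(D)=\det\pi_*\mathscr{O}_Y\otimes\det\pi_*\mathscr{O}_D(D).\]
The last factor is the determinant of a torsion sheaf supported in codimension one, namely $\mathscr{O}_X(\text{divisor of its length along each prime})$. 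The length of $\pi_*\mathscr{O}_D(D)$ at the generic point of a prime divisor $W\subset X$ is $\sum_{V\mapsto W}\operatorname{mult}_V(D)\,[K(V):K(W)]$. This is exactly the coefficient of $W$ in $\pi_*(D)$, by the definition of $\pi_*$ on cycles.

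I expect the main care to be needed in the additivity/length step. Concretely, one must justify that $\det$ of a torsion sheaf $\mathscr{T}$ on a smooth surface equals $\mathscr{O}_X(\sum \operatorname{length}_{\mathscr{O}_{X,W}}(\mathscr{T}_W)\,W)$, and must compute the length of a pushforward. The first is standard (use a locally free resolution; it can be checked at codimension-one points, where $\mathscr{O}_{X,W}$ is a DVR). The second follows from $\operatorname{length}_{\mathscr{O}_{X,W}}=\sum[\kappa(V):\kappa(W)]\operatorname{length}_{\mathscr{O}_{Y,V}}$. For a general (non-effective) $D$, I would write $D=D_1-D_2$ with $D_i$ effective and apply the sequence to $\mathscr{O}(D)\subset\mathscr{O}(D+D_2)=\mathscr{O}(D_1)$. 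This works directly on all of $Y$ with reflexive sheaves after restricting to $X^\circ$, avoiding any need for a homomorphism argument.
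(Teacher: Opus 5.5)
Your proposal is correct and follows essentially the paper's proof. Local freeness comes from reflexive $\Rightarrow$ $(S_2)$ $\Rightarrow$ maximal Cohen--Macaulay $\Rightarrow$ free over the regular surface $X$. The determinant formula comes from passing to the complement of finitely many points of $X$ and computing determinants of torsion quotients in short exact sequences $0\to\mathscr{O}_Y(D)\to\mathscr{O}_Y(D')\to(\text{sheaf supported on a divisor})\to 0$.

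The difference is only bookkeeping. The paper inducts on $\sum_i|n_i|$, adding one prime divisor at a time, and applies Fulton's Example 15.3.1 on a locus where $B_1$ and $\pi(B_1)$ are regular. You instead write $D=D_1-D_2$ with $D_i$ effective and compute $\det$ of the torsion terms by lengths at codimension-one points. That handles both signs of coefficients at once, whereas the paper's step ``assume $n_1<0$'' leaves the effective case to the reader.
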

\begin{proof}
Since $\mathscr{O}_{Y}(D)|_{Y_i}$ is a reflexive $\mathscr{O}_{Y_i}$-module and $Y_i$ is normal, then $\mathscr{O}_{Y}(D)|_{Y_i}$ is Cohen\textendash Macaulay by Lemma \ref{lemma: reflexive to (S_2)}. Note that $X$ is regular and we have $$\dim(\mathrm{Supp}(\mathscr{O}_{Y}(D)|_{Y_i}))=2=\dim(X).$$ By Lemma \ref{lemma: CM iff free}, $\pi_*\mathscr{O}_{Y}(D)$ is locally free sheaf.

Let $D=\sum_i n_iD_i$, where the $D_i$ are distinct prime cycles on $Y$.
The proof will proceed by induction on $n=\sum_i|n_i|$.
If $n=0$ and $D=0$, we have $\det(\pi_*\mathscr{O}_Y(0))=\det(\pi_*\mathscr{O}_Y)\otimes \mathscr{O}_X(\pi_*(0))=\det(\pi_*\mathscr{O}_Y)$. 
Assume that the formula is true for $\sum_i\lvert n_i\rvert<n$.
We can assume that $n_1<0$, then we set $D'=D+D_1$. Thus, $D'=\sum_in_i'D_i$ where
$n_i'=n_i$ if $i\neq 1$, and $n_1'=n_1+1$, so that $\sum_i\lvert n_i'\rvert=\sum_i\vert n_i\rvert-1$. 
Take an open subset $U$ of $X$ such that $\mathrm{codim}(X\setminus U)=2$, and $Y'=\pi^{-1}(U)$, $B_1=D_1|_{Y'}$ and $V=\pi(D_1)\cap U$ are regular.
We only need to show that $\det(\pi'_*\mathscr{O}_{Y'}(B))=\det(\pi'_*\mathscr{O}_{Y'})\otimes \mathscr{O}_U(\pi'_*(B))$ where $\pi'=\pi|_{Y'}$ and $B=D|_{Y'}$. Let $B'=D'|_{Y'}$. Then $B,B',B_1$ are Cartier divisors on $Y'$.
Consider the exact sequence
\[
0\to \mathscr{O}_{Y'}(B)\to \mathscr{O}_{Y'}(B')\to \mathscr{O}_{B_1}\to 0.
\]
Since $\pi'$ is finite, $\pi'$ is an exact functor. Thus, there is an induced exact sequence
\[
0\to \pi'_*\mathscr{O}_{Y'}(B)\to \pi'_*\mathscr{O}_{Y'}(B')\to \pi_*'\mathscr{O}_{B_1}\to 0.
\]
Note that $\pi_*'\mathscr{O}_{B_1}=\pi_*'j_*j^*\mathscr{O}_{Y'}=j'_*\pi''_*\mathscr{O}_{B_1}$, where $j:B_1\to Y'$, $\pi''=\pi'|_{B_1}:B_1\to V$ and $j':V\to U$.
Since $B_1$ and $V$ are regular, $\pi''$ is finite flat of degree $r=[K(B_1):K(V)]$.
Then $\pi''_*\mathscr{O}_{B_1}$ is a vector bundle of rank $r$ on $V$. 
By \cite[Example 15.3.1]{fulton2013intersection}, we have $\det(j'_*\pi''_*\mathscr{O}_{B_1})\cong \mathscr{O}_U(rV)\cong \mathscr{O}_U(\pi'_*B_1)$.
Then 
\begin{align*}
\det(\pi'_*\mathscr{O}_{Y'}(B))&\cong \det(\pi'_*\mathscr{O}_{Y'}(B'))\otimes \det( \pi_*'\mathscr{O}_{B_1})^{-1}\\
&\cong \det(\pi'_*\mathscr{O}_{Y'})\otimes \mathscr{O}_U(\pi'_*(B'))\otimes \mathscr{O}_U(-\pi'_*B_1)\\
&\cong det(\pi'_*\mathscr{O}_{Y'})\otimes \mathscr{O}_U(\pi'_*(B)).
\end{align*}

\end{proof}

\subsection{Dualizing sheaf}

In this section, we collect some facts about dualizing sheaf for a finite morphism. 
We refer the reader to \cite{liu2006algebraic} for a more detailed discussion.


\begin{lemma}[Proposition 4.25, Lemma 4.26, Theorem 4.32 in \cite{liu2006algebraic}]\label{lemma: dualizing sheaf}
Let $f:X\to Y$ be a finite morphism of locally Noetherian schemes. Let $\mathscr{F}$ (resp. $\mathscr{G}$) be a quasi-coherent sheaf on $X$ (resp. on $Y$ ). Let  $f^!\mathscr{G}=\mathscr{H}om_{\mathscr{O}_Y}(f_*\mathscr{O}_X,\mathscr{G})$. 
\begin{enumerate}
    \item The sheaf $f^!\mathscr{G}$ is canonically endowed with the structure of a quasi-coherent $\mathscr{O}_X$-module and with a homomorphism $\mathrm{tr}_{\mathscr{G}}:f_*f^!\mathscr{G}\to \mathscr{G}$. 
    \item The canonical homomorphism $$f_*\mathscr{H}om_{\mathscr{O}_X}(\mathscr{F},f^!\mathscr{G})\to \mathscr{H}om_{\mathscr{O}_Y}(f_*\mathscr{F},\mathscr{G})$$ induced by $\mathrm{tr}_{\mathscr{G}}$ is an isomorphism. 
\item Let $\omega_f=f^!\mathscr{O}_Y$ and $\mathrm{tr}_f=\mathrm{tr}_{\mathscr{O}_Y}$. Then $(\omega_f, \mathrm{tr}_f)$ is the dualizing sheaf for $f$.
\item Let $f$ be projective and decompose into a finite morphism $\pi:X\to Z$ followed by a finite projective morphism $g:Z\to Y$. If $\pi$ is flat or if $\omega_g$ is locally free, then $\omega_f=\omega_{\pi}\otimes_{\mathscr{O}_X}\pi^*\omega_g$.
\item If $f$ is a flat projective local complete intersection, then the dualizing sheaf $\omega_f$ is a  invertible sheaf and is isomorphic to the canonical sheaf $\omega_{X/Y}$ of $f$. In particular, if $f$ is smooth, then $\omega_f=\mathscr{O}_X$. 
\end{enumerate}
\end{lemma}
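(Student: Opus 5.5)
Items (1)--(3) reduce to the tensor--Hom adjunction for the finite algebra $f_*\mathscr{O}_X$ over $\mathscr{O}_Y$. Items (4)--(5) then follow by composing these adjunctions and by a local computation for complete intersections.

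\textbf{Items (1)--(3).} Since $f$ is finite, hence affine, $f_*$ identifies quasi-coherent $\mathscr{O}_X$-modules with quasi-coherent $f_*\mathscr{O}_X$-modules on $Y$. For (1), $\mathscr{H}om_{\mathscr{O}_Y}(f_*\mathscr{O}_X,\mathscr{G})$ is an $f_*\mathscr{O}_X$-module via $(a\cdot\varphi)(x)=\varphi(ax)$. It is quasi-coherent because $f_*\mathscr{O}_X$ is coherent and $Y$ is locally Noetherian, so it descends to a quasi-coherent $\mathscr{O}_X$-module $f^!\mathscr{G}$. The trace is evaluation at $1$, namely $\mathrm{tr}_{\mathscr{G}}(\varphi)=\varphi(1)$. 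For (2) I work locally on $Y=\operatorname{Spec}A$ with $X=\operatorname{Spec}B$, so that $\mathscr{F}=\widetilde{M}$ and $\mathscr{G}=\widetilde{N}$. The adjunction
\[
\operatorname{Hom}_B(M,\operatorname{Hom}_A(B,N))\cong\operatorname{Hom}_A(M\otimes_BB,N)=\operatorname{Hom}_A(M,N)
\]
is given by composing with $\mathrm{tr}$. It is compatible with localization, since $B$ is finitely presented over $A$ and $M$ may be taken coherent, or one argues by a direct limit. Hence these local isomorphisms glue to the sheaf-level isomorphism. For (3), take global sections in (2). Because $f$ is finite, $R^if_*=0$ for $i>0$, so $\operatorname{Hom}_X(\mathscr{F},\omega_f)\cong\operatorname{Hom}_Y(f_*\mathscr{F},\mathscr{O}_Y)$ functorially via $\mathrm{tr}_f$. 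This is exactly the universal property defining the dualizing pair for $f$.

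\textbf{Item (4).} Applying (2) twice shows that $\pi^!g^!$ satisfies the adjunction characterizing $f^!$, so $\omega_f\cong\pi^!\omega_g=\mathscr{H}om_{\mathscr{O}_Z}(\pi_*\mathscr{O}_X,\omega_g)$. There is a natural map $\mathscr{H}om(\pi_*\mathscr{O}_X,\mathscr{O}_Z)\otimes\omega_g\to\mathscr{H}om(\pi_*\mathscr{O}_X,\omega_g)$. This map is an isomorphism if $\omega_g$ is locally free. It is also an isomorphism if $\pi$ is flat, since then $\pi_*\mathscr{O}_X$ is locally free of finite rank. In either case this gives $\omega_f\cong\omega_\pi\otimes\pi^*\omega_g$.

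\textbf{Item (5), the main obstacle.} This is the only non-formal step. Locally $f$ factors as a regular closed immersion $X\hookrightarrow P$ followed by a smooth $P\to Y$. Using (4), together with the smooth case $\omega_{P/Y}=\det\Omega_{P/Y}$, one reduces to showing that for a regular immersion of codimension $c$ cut out by a regular sequence, the sheaf $\mathscr{E}xt^c(\mathscr{O}_X,\mathscr{O}_P)$ is invertible and equals $\det(\mathscr{I}/\mathscr{I}^2)^{\vee}$. This is computed with the Koszul resolution. In the finite flat case I would argue directly. Write $B=A[x_1,\dots,x_n]/(g_1,\dots,g_n)$ with $g_1,\dots,g_n$ a regular sequence, and use Tate's generator of $\operatorname{Hom}_A(B,A)$, built from the Jacobian $\det(\partial g_i/\partial x_j)$, to show that $\operatorname{Hom}_A(B,A)$ is free of rank one over $B$. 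Matching this generator with $\omega_{X/Y}=\det(\mathscr{I}/\mathscr{I}^2)^\vee\otimes\det\Omega_{P/Y}|_X$ is the delicate bookkeeping step, and I expect it to be the hardest part. For smooth $f$, which is then finite étale, the relative canonical sheaf is $\mathscr{O}_X$, since $\Omega_{X/Y}=0$.
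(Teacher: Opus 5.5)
The paper gives no proof of its own here; it quotes Liu (Prop.~6.4.25, Lemma~6.4.26, Thm.~6.4.32). Your handling of (1)--(4) through the Hom--tensor adjunction for the finite $\mathscr{O}_Y$-algebra $f_*\mathscr{O}_X$ is the standard argument and is fine. In (2) you need no finiteness of $M$: just compare sections over affine opens of $Y$. Your proof of (4) uses essentially that $g$ is finite, as the statement assumes.

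The gap is in (5). Your first route fails as written. In $X\hookrightarrow P\to Y$ the map $P\to Y$ is smooth of positive relative dimension, so (4) does not apply. Moreover, no formula $\omega_f\cong\pi^!\omega_g$ can hold there: for a closed immersion of positive codimension $\pi^!\omega_P=\mathscr{H}om_{\mathscr{O}_P}(\mathscr{O}_X,\omega_P)=0$, while $\omega_f\neq 0$. Passing from $\omega_f$ to $\mathscr{E}xt^c_{\mathscr{O}_P}(\mathscr{O}_X,\omega_{P/Y})$ needs genuine duality input. Concretely, you need the dualizing sheaf $\mathscr{O}(-n-1)$ of $\mathbb{P}^n_Y$ and the $\mathscr{E}xt^c$-description of the dualizing sheaf of a flat Cohen--Macaulay closed subscheme. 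That is exactly what the citation to Liu supplies, and it cannot be extracted from (1)--(4).

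Your second route uses Tate's generator for $B=A[x_1,\dots,x_n]/(g_1,\dots,g_n)$ finite free over $A$. This does show that $\omega_f$ is invertible. A cheaper argument also works: the fibres $B\otimes_A k(y)$ are complete intersections, hence Gorenstein, and Nakayama lifts $\operatorname{Hom}_{k(y)}(B\otimes_A k(y),k(y))\cong B\otimes_A k(y)$.

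But $\omega_f\cong\omega_{X/Y}$ is a global assertion, and any two invertible sheaves are locally isomorphic. So its whole content is the step you defer. You need a local isomorphism $\operatorname{Hom}_A(B,A)\cong\det(\mathscr{I}/\mathscr{I}^2)^{\vee}\otimes\det\Omega_{P/Y}|_X$ that is independent of the presentation, i.e.\ the transformation rule of Tate's trace under changes of coordinates and of generators of $\mathscr{I}$, so that these local isomorphisms glue. Without it (5) is unproved. It is exactly this global form that the paper uses later, e.g.\ $\omega_{\pi_b}\cong\omega_{Y_b}\otimes\pi_b^*\omega_X^{-1}$ in the $\mathrm{SL}_n$ case on $C_1\times C_2$.

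You can either carry out (or cite) that presentation-independence, or bypass it for the application. Take a finite flat $\pi:Y\to X$ between smooth projective varieties of dimension $d$. Serre duality and (2) give $\operatorname{Hom}_Y(\mathscr{F},\omega_Y)\cong H^d(X,\pi_*\mathscr{F})^{\vee}\cong\operatorname{Hom}_X(\pi_*\mathscr{F},\omega_X)\cong\operatorname{Hom}_Y(\mathscr{F},\pi^!\omega_X)$, functorially in coherent $\mathscr{F}$. Hence $\omega_Y\cong\pi^!\omega_X\cong\omega_\pi\otimes\pi^*\omega_X$ by your argument for (4).
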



\subsection{Hitchin morphism and spectral data morphism}

In this section, we fix a connected reductive group $G$ over $\mathbb{K}$ with Lie algebra $\mathfrak{g}$, a maximal torus $T$ of $G$ with Lie algebra $\mathfrak{t}$, the Weyl group $W=N_G(T)/T$, and discuss three moduli stacks on $X$: the moduli stack of Higgs bundles $\mathscr{M}_{X,G}$; the moduli stack of the Hitchin base $\mathscr{A}_{X,G}$; the moduli stack of spectral data $\mathscr{B}_{X,G}$, and Hitchin morphism $h_{X,G}$ and spectral data morphism $sd_{X,G}$. We refer the reader to \cite{chen2020hitchin} and \cite{song2023image} for a more detailed discussion.

Let $\mathfrak{C}^2_{\mathfrak{g}}\subseteq \mathfrak{g}^2$ be the commuting scheme, which is defined as the scheme-theoretic fiber of the commutator map over the zero
\[\mathfrak{g}^2\rightarrow \mathfrak{g}, \ \ (x_1, x_2)\mapsto [x_1, x_2].\] 
There are two group actions on $\mathfrak{C}^2_{\mathfrak{g}}$:
\begin{enumerate}
\item the diagonal adjoint $G$-action on $\mathfrak{g}^2$ restricts to one on $\mathfrak{C}^2_{\mathfrak{g}}$.\\
\item For $h\in GL_2$, the $GL_2$-action on $\mathfrak{C}^2_{\mathfrak{g}}$ is defined as
$(x_1,x_2)\mapsto (x_1,x_2)h$.
\end{enumerate}

Let $\mathscr{M}=[\mathfrak{C}^2_{\mathfrak{g}}/G\times GL_2]$ be the quotient stack. Let $X\to [*/GL_2]=\mathbb{B}GL_2$ be the morphism corresponding to $T_X^*$. There is a natural morphism $\mathscr{M}\to \mathbb{B}GL_2$.
We consider the fiber product
\[\begin{tikzcd}
	{\mathscr{M}\times_{\mathbb{B}GL_2}X} & {\mathscr{M}} \\
	{X} & {\mathbb{B}GL_2}
	\arrow[from=1-1, to=1-2]
	\arrow[from=1-1, to=2-1]
	\arrow["\square"{description}, draw=none, from=1-1, to=2-2]
	\arrow[ from=1-2, to=2-2]
	\arrow[from=2-1, to=2-2].
\end{tikzcd}\]
Let $\mathscr{M}_{X,G}=Sect(X,\mathscr{M}\times_{\mathbb{B}GL_2}X)$ be the stack of section of $\mathscr{M}\times_{\mathbb{B}GL_2}X$ over $X$, i.e., for each $\mathbb{K}$-scheme $S$ the groupoid $\mathscr{M}_{X,G}(S)$ consists of maps over $X$:
\[
X\times S\to  \mathscr{M}\times_{\mathbb{B}GL_2}X.
\]
Equivalently, $\mathscr{M}_{X,G}(S)$ consists of a pair $(E,\theta)$ (called a $G$-Higgs bundle), where $E$ is a $G$-principal bundle over $X\times S$ and Higgs field $\theta$ is an element in 
$ H^0(X\times S,\mathrm{ad}(E)\otimes_{\mathscr{O}_{X\times S}} \Omega_{X\times S}^1)$,
where $\mathrm{ad}(E)$ is the adjoint vector bundle of $E$ satisfying the integrability condition $\theta \wedge\theta=0$. 
$\mathscr{M}_{X,G}$ is actually the stack of $G$-Higgs bundles on $X$.

Let $\mathfrak{c}=\mathfrak{g}/\!\!/ G$. Note that $\mathfrak{c}\cong \mathfrak{t}/\!\!/W$ and it  has homogeneous coordinates $c_1,\dots,c_n$ of degree $e_1,\dots,e_n$ where $\dim(\mathfrak{t})=n$. Take $A=\prod_{i=1}^nS^{e_i}\mathbb{A}^2$.
Since the morphism $\mathfrak{g}\to\mathfrak{c}$ is $G$-invariant and $\mathbb{G}_m$-equivariant, it induces a $G$-invariant morphism $\mathrm{pol}: \mathfrak{g}^2\to A$,
which embodies Weyl’s polarization method for the diagonal action of $G$ on $\mathfrak{g}^2$.
For $v=(v_1,v_2)\in \mathfrak{g}^2$, we have $\mathrm{pol}(v)=(f_1(v),\dots,f_n(v))$ such that $c_i(t_1v_1+t_2v_2)=\sum_{i_1+i_2=i}f_{i_1i_2}(v)t_1^{i_1}t_2^{i_2}$ and $f_i=(f_{i_1i_2}\mid i_1+i_2=i)\in K[S^i\mathbb{A}^2]$. 
We obtain a $G$-invariant morphism $h:\mathfrak{C}^2_{\mathfrak{g}}\to A$
to be called the universal Hitchin morphism in \cite{chen2020hitchin}.

We have a natural morphism $[A/GL_2]\to\mathbb{B}GL_2$ of stacks.
Let $\mathscr{A}_{X,G}=Sect(X,[A/GL_2]\times_{\mathbb{B}GL_2}X)$ be the stack of section of $[A/GL_2]\times_{\mathbb{B}GL_2}X$ over $X$, i.e., for each $\mathbb{K}$-scheme $S$ the groupoid $\mathscr{A}_{X,G}(S)$ consists of maps over $X$:
\[
X\times S\to [A/GL_2]\times_{\mathbb{B}GL_2}X.
\]
The moduli stack $\mathscr{A}_{X,G}$ is represented by $\oplus_{i=1}^nH^0(X, S^{e_i}T_X^*)$. We also use $\mathscr{A}_{X,G}$ for the scheme $\oplus_{i=1}^nH^0(X, S^{e_i}T_X^*)$, which is known as the Hitchin base. 

We have the commutative diagram 
\[\begin{tikzcd}
	{[\mathfrak{C}^2_{\mathfrak{g}}/(G\times GL_2)]} & {[A/GL_2]} \\
	& {\mathbb{B}GL_2.}
	\arrow["{[h]}", from=1-1, to=1-2]
	\arrow[from=1-1, to=2-2]
	\arrow[from=1-2, to=2-2]
\end{tikzcd}\]
Hence we obtain the Hitchin morphism
\[
h_{X,G}:\mathscr{M}_{X,G}\to \mathscr{A}_{X,G}.
\]

By \cite{li2024functions}, $\mathfrak{g}^2/\!\!/ G\cong \mathfrak{t}^2/\!\!/ W$.
The morphism $\mathfrak{t}\to \mathfrak{c}=\mathfrak{t}/\!\!/W$ is $W$-invariant and $\mathbb{G}_m$-equivariant.
Using Weyl’s polarization construction for the diagonal action of $W$ on $\mathfrak{t}^2$, we have a $W$-invariant morphism $pol_W:\mathfrak{t}^2/\!\!/W\to A$.
By \cite[Theorem 4.1, Remark 4.1]{chen2020hitchin}, the morphism $pol_W$ is finite, and there exists a unique reduced closed subscheme $B$ of $A$ such that $pol_W$ factors through a morphism $\mathfrak{t}^2/\!\!/W\to B$ which is a universal homeomorphism and normalization. For $G=GL_n,SL_n$ or types B or C,  $pol_W$ is a closed embedding and $\mathfrak{t}^2/\!\!/W\to B$ is an isomorphism.

We have a natural morphism $[B/GL_2]\to\mathbb{B}GL_2$ of stacks.
Let $\mathscr{B}_{X,G}=Sect(X,[B/GL_2]\times_{\mathbb{B}GL_2}X)$ be the stack of section of $[B/GL_2]\times_{\mathbb{B}GL_2}X$ over $X$.
The moduli stack $\mathscr{B}_{X,G}$ is represented by scheme $Sect(X,Fr(T_X^*)\times_{GL_2}B)$. We also use $\mathscr{B}_{X,G}$ for the scheme $Sect(X,Fr(T_X^*)\times_{GL_2}B)$. 
Note that if $G=GL_n$ then $\mathscr{B}_{X,G}=Sect(X, Sym^{n}(T_X^*/X))$.
An element $b\in\mathscr{B}_{X,G}$ is called a spectral datum and $\mathscr{B}_{X,G}$ is called the space of spectral data.

We have the commutative diagram 
\[\begin{tikzcd}
	{[\mathfrak{C}^2_{\mathfrak{g}}/(G\times GL_2)]} & {[A/GL_2]} \\
	{[B/GL_2]} & {\mathbb{B}GL_2}.
	\arrow[from=1-1, to=1-2]
	\arrow[from=1-1, to=2-1]
	\arrow[ from=1-2, to=2-2]
	\arrow[from=2-1, to=2-2]
\end{tikzcd}\]
Hence we obtain the spectral data morphism $sd_{X,G}:\mathscr{M}_{X,G}\to \mathscr{B}_{X,G}$ and the commutative diagram 
\[\begin{tikzcd}
	{\mathscr{M}_{X,G}} & {\mathscr{A}_{X,G}} \\
	& {\mathscr{B}_{X,G}}.
	\arrow["{h_{X,G}}", from=1-1, to=1-2]
	\arrow["{sd_{X,G}}"', from=1-1, to=2-2]
	\arrow[hook, from=2-2, to=1-2]
\end{tikzcd}\]

\subsection{The Hitchin fiber of $h_{X,GLn}$}

In this section, we collect some results
about the Hitchin fiber of $h_{X,GLn}$ and spectral cover, and we refer the reader to \cite{chen2020hitchin} and \cite{song2023image} for a more detailed discussion.

Take a $b\in \mathscr{B}_{X,GL_n}(\mathbb{K})$ such that $b(\eta)\in \{[\alpha_1,\dots,\alpha_n]\in Sym^n(\mathbb{A}^2_{\bar{k(\eta)}})\mid  \alpha_i\neq \alpha_j,i\neq j\}$, where $\eta$ is the generic point of $X$.
Let $\sigma_0=1$ and
\begin{align*}
    &\prod_{i=1}^n(t_1x_1+t_2x_2-t_1x_{i1}-t_2x_{i2})\\
&=\sum_{i=0}^n(-1)^i\sigma_i(t_1x_{11}+t_2x_{12},\dots,t_1x_{n1}+t_2x_{n2})(t_1x_1+t_2x_2)^{n-i}\\
&=\sum_{i_1+i_2=n}f_{i_1i_2}t_1^{i_1}t_2^{i_2}
\end{align*}
in $\mathbb{K}[x_{ij},x_j,t_j\mid 1\leq i\leq n,j=1,2]$.
Then $f_{i_1i_2}\in \mathbb{K}[x_{ij},x_1,x_2\mid 1\leq i\leq n,j=1,2]^{\mathfrak{S}_n}$.
Hence the ideal generated by $\{f_{i_1i_2}\mid i_1+i_2=n\}$ defines the closed subscheme $Cayley^n(\mathbb{A}^2)$ of $Sym^n(\mathbb{A}^2)\times_{\mathbb{K}}\mathbb{A}^2$.
Since this ideal is a $GL_2$-module, we can obtain the closed subscheme $Cayley^{n}(T_X^*/X)=Fr(T_X^*)\times_{GL_2} Cayley^n(\mathbb{A}^2)$ of $Sym^n(T_X^*/X)\times_XT_X^*$.

\begin{lemma}[Proposition 6.1, 6.2, 6.3 in \cite{chen2020hitchin}]\label{lem: X_b of ChenNgo}\ 

\begin{enumerate}
\item The projection $p':Cayley^n(\mathbb{A}^2)\to Sym^n(\mathbb{A}^2)$ is a finite morphism which
is \'etale over the open subset $Sym^n_{(1^n)}(\mathbb{A}^2)$ of $Sym^n(\mathbb{A}^2)$ such that $Sym^n_{(1^n)}(\mathbb{A}^2)(\mathbb{K})=\{[\alpha_1,\dots,\alpha_n]\in  Sym^n(\mathbb{A}^2)(\mathbb{K})\mid \forall i\neq j,\alpha_i\neq \alpha_j\}$.
    \item There is a universal homeomorphism $\mathbb{A}^2/\!\!/\mathfrak{S}_{n-1}\to Cayley^n(\mathbb{A}^2)$.
    \item There is a cartesian diagram
\[\begin{tikzcd}
	{X_{b}} & {Cayley^n(T^*_X/X)} \\
	{X} & {Sym^n(T^*_X/X)}.
	\arrow[from=1-1, to=1-2]
	\arrow["{p_b}", from=1-1, to=2-1]
	\arrow["\square"{description}, draw=none, from=1-1, to=2-2]
	\arrow["{p'}", from=1-2, to=2-2]
	\arrow["{b}", from=2-1, to=2-2]
\end{tikzcd}\]
    \item The fiber $h_{X,GL_n}^{-1}(b)$ is isomorphic to the stack of maximal Cohen--Macaulay sheaves of generic rank $1$ on the spectral cover $X_{b}$.
Moreover, if $\mathscr{F}$ is a maximal Cohen--Macaulay sheaf of generic rank $1$ over $X_{b}$, then $\mathscr{E}={p_b}_*\mathscr{F}$ is a vector bundle of rank $n$ over $X$, and it is naturally equipped with a Higgs field $\theta:\mathscr{T}_X\to {p_b}_*\mathscr{O}_{X_{b}}\to \mathscr{E}nd_{\mathscr{O}_X}(\mathscr{E})$.
\end{enumerate}
\end{lemma}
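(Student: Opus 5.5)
The lemma bundles four statements from \cite{chen2020hitchin}, so the plan is to prove them in order: (1) finiteness and generic \'etaleness, (2) the universal homeomorphism, (3) the cartesian square, and (4) the spectral correspondence. Parts (1)–(3) are local algebra on $Cayley^n(\mathbb{A}^2)$; part (4) carries the real content.

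\emph{Finiteness and étaleness, part (1).} Specialize $(t_1,t_2)=(1,0)$ and $(0,1)$ in the defining identity. This gives $\prod_i(x_1-x_{i1})=f_{n0}$ and $\prod_i(x_2-x_{i2})=f_{0n}$, both of which vanish on $Cayley^n(\mathbb{A}^2)$. Their coefficients are symmetric in the $x_{ij}$, so $x_1$ and $x_2$ are integral over $\mathbb{K}[Sym^n(\mathbb{A}^2)]$, and $p'$ is finite.

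For \'etaleness, pull back along the \'etale map from the open set of $\mathbb{A}^{2n}$ where the $\alpha_i$ are pairwise distinct. There the defining ideal is the content ideal of the polynomial $\prod_i \ell_i(t)$ in $t_1,t_2$, where $\ell_i=t_1(x_1-x_{i1})+t_2(x_2-x_{i2})$. Near a point with $x=\alpha_i$, each $\ell_j$ for $j\neq i$ has unit content. By Gauss's content lemma over a local ring, the content of $\prod_j\ell_j$ then equals the content of $\ell_i$, namely $(x_1-x_{i1},x_2-x_{i2})$. Hence the pulled-back Cayley scheme is locally the disjoint union of the $n$ sections $x=\alpha_i$, and so it is \'etale of degree $n$.

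\emph{Universal homeomorphism, part (2).} Define a map from $\mathbb{A}^2\times(\mathbb{A}^2)^{n-1}/\mathfrak{S}_{n-1}$ (the quotient $\mathbb{A}^{2n}/\!\!/\mathfrak{S}_{n-1}$ in the statement) by $(\alpha_1,[\alpha_2,\dots,\alpha_n])\mapsto([\alpha_1,\dots,\alpha_n],\alpha_1)$. It lands in $Cayley^n$ because the factor $\ell_1$ vanishes identically. It is finite, since its composite with the finite map $p'$ is the finite quotient map to $Sym^n(\mathbb{A}^2)$.

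It is bijective on $\mathbb{K}$-points. Indeed, a product of linear forms over a field vanishes identically in $t$ only if one factor is zero, so $x$ must equal some $\alpha_i$; this gives surjectivity. Injectivity holds because $\alpha_1$ is recovered as $x$. Being finite, surjective and universally injective (checking fibers over all field-valued points in the same way), the map is a universal homeomorphism.

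\emph{The cartesian square, part (3).} This is a definition: all constructions above are $GL_2$-equivariant, so we twist by $Fr(T_X^*)$ and pull back along $b$.

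\emph{Spectral correspondence, part (4).} A $GL_n$-Higgs bundle with $\theta\wedge\theta=0$ is the same thing as a locally free $\mathscr{O}_X$-module $\mathscr{E}$ with a module structure over $\mathrm{Sym}(\mathscr{T}_X)$, which is the same as a quasi-coherent sheaf on $T_X^*$ finite over $X$. The main step is to show that, when $h(\mathscr{E},\theta)=b$, this module is annihilated by the local equations $f_{i_1i_2}$ of $Cayley^n$, i.e.\ that a commuting pair satisfies the polarized Cayley--Hamilton identity. This is the step I expect to be the main obstacle.

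My approach to it is as follows. Since $\mathscr{E}$ is torsion free on the integral scheme $X$, it suffices to check the identity at the generic point, where $\theta$ is a commuting pair of matrices over a field. The polarized Cayley--Hamilton identity is a polynomial identity in the matrix entries. It holds on simultaneously diagonalizable pairs, because there the identity reduces to the vanishing of $\prod_i\ell_i$ at a joint eigenvector. Such pairs are dense in the commuting variety, which is irreducible by Motzkin--Taussky. So the identity holds on the reduced commuting variety, and hence for commuting matrices over any field.

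Granting this, $\mathscr{E}=p_{b*}\mathscr{F}$ for a coherent sheaf $\mathscr{F}$ on $X_b$. By Lemma~\ref{lemma: A to B, E CM} and Lemma~\ref{lemma: CM iff free}, $\mathscr{E}$ is locally free exactly when $\mathscr{F}$ is maximal Cohen--Macaulay. Generic rank $1$ follows from part (1): $X_b$ is generically \'etale of degree $n$ over $X$ and $\mathscr{E}$ has rank $n$. Conversely, pushing forward a maximal Cohen--Macaulay sheaf $\mathscr{F}$ gives a vector bundle with the tautological Higgs field $\mathscr{T}_X\to p_{b*}\mathscr{O}_{X_b}\to\mathscr{E}nd(\mathscr{E})$. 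Its spectral datum is $b$, since this can be checked generically, where $X_b$ is \'etale.

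The two constructions are mutually inverse and compatible with base change in $S$, which gives the isomorphism of stacks.
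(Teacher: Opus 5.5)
The paper does not prove this lemma; it quotes it from Chen--Ng\^o, so your proposal has to stand on its own. Parts (1)--(3) do. Finiteness follows from the monic relations $f_{n0}=\prod_i(x_1-x_{i1})$ and $f_{0n}=\prod_i(x_2-x_{i2})$. \'Etaleness follows from the content-ideal computation after the \'etale base change to ordered configurations. The universal homeomorphism follows from ``finite and bijective on geometric points'', and your reading of $\mathbb{A}^2/\!\!/\mathfrak{S}_{n-1}$ as $\mathbb{A}^2\times Sym^{n-1}(\mathbb{A}^2)$ is the sensible one. Part (4) has two gaps.

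\textbf{First gap: the Cayley--Hamilton step.} The step you call the main obstacle is immediate, and the route you chose does not give what (4) asserts. Locally write $\theta=\theta_1\,dz_1+\theta_2\,dz_2$ and put $M=t_1\theta_1+t_2\theta_2$, a matrix over the commutative ring $\mathscr{O}_X(V)[t_1,t_2]$. The condition $h_{X,GL_n}(\mathscr{E},\theta)=b$ says that $b^*\sigma_i(t_1x_{11}+t_2x_{12},\dots,t_1x_{n1}+t_2x_{n2})$ is the $i$-th characteristic coefficient $c_i(M)$. Since $\theta_1\theta_2=\theta_2\theta_1$, substituting $(x_1,x_2)\mapsto(\theta_1,\theta_2)$ into $(t_1x_1+t_2x_2)^{n-i}$ gives $M^{n-i}$. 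Hence $\sum f_{i_1i_2}t_1^{i_1}t_2^{i_2}$ acts as $\sum_i(-1)^ic_i(M)M^{n-i}$, which is $0$ by the ordinary Cayley--Hamilton theorem over any commutative ring.

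Your argument reduces to the generic point and then uses density of diagonalizable pairs in the irreducible commuting variety. That is correct for a single Higgs bundle on the integral scheme $X$, but it does not give the isomorphism of \emph{stacks}. For a family over $X\times S$ with $S$ non-reduced there is no generic point to reduce to. Density only places the Cayley polynomials in the \emph{radical} of the commutator ideal. Concluding that they vanish on non-reduced commuting pairs would require the commuting scheme to be reduced, which is a long-standing open problem. Replace this step by the direct Cayley--Hamilton argument; Motzkin--Taussky is then not needed.

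\textbf{Second gap: generic rank one.} ``Generic rank $1$ follows from part (1)'' is not a valid inference; generic \'etaleness of degree $n$ together with $\operatorname{rk}\mathscr{E}=n$ does not force it. Take $b$ given by the $1$-forms $0$ and $\omega\neq 0$. The zero section $Z$ is a component of $X_b$, and $\mathscr{F}=\mathscr{O}_Z^{\oplus 2}$ satisfies $p_{b*}\mathscr{F}\cong\mathscr{O}_X^{\oplus 2}$. Yet $\mathscr{F}$ has rank $2$ on $Z$ and rank $0$ on the other component, so it is not even of full support.

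What excludes this is the hypothesis $h(\mathscr{E},\theta)=b$. Over $\overline{K(X)}$, the polarized characteristic polynomial of $(\theta_1,\theta_2)$ is $\prod_j(T-t_1\beta_{j1}-t_2\beta_{j2})^{\dim V_j}$. Here $\beta_j$ runs over the $n$ distinct geometric points of the generic fibre of $X_b$, and $V_j$ is the corresponding joint eigenspace. Comparing with $b(\eta)$, whose linear factors are pairwise distinct, forces $\dim V_j=1$ for all $j$. This same fact gives $\operatorname{Supp}\mathscr{F}=X_b$, which you need for ``maximal'' Cohen--Macaulay.

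When you invoke Serre's lemma, note that $(p_{b*}\mathscr{O}_{X_b})_x$ is only semi-local. Apply the lemma at each point of $X_b$ over $x$, using that $X_b$ is equidimensional of dimension $2$.

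The converse direction, from a maximal Cohen--Macaulay sheaf of generic rank $1$ to a point of $h_{X,GL_n}^{-1}(b)$, is fine as you wrote it. It is also the only direction the paper uses later.
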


\begin{lemma}\label{lem: X_b eqdim}
The surface $X_{b}$ is equidimensional.
\end{lemma}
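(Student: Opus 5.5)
We will show that every irreducible component of $X_b$ has dimension $2$, by exhibiting $X_b$ locally over $X$ as a closed subscheme of a smooth threefold cut out by a single equation, and then appealing to Krull's principal ideal theorem.

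\textbf{Step 1: local description of $X_b$.} Work locally on $X$. Choose an open affine $U=\mathrm{Spec}\,R\subseteq X$ on which $T_X^*$ is trivial, with coordinates $x_1,x_2$ on the fibres. Over $U$, the section $b$ gives elements $a_{i_1i_2}\in R$, the pullbacks of the $\sigma_{i_1i_2}$. By Lemma~\ref{lem: X_b of ChenNgo}(3), $X_b|_U$ is the closed subscheme of $T_X^*|_U=\mathrm{Spec}\,R[x_1,x_2]$ cut out by the specializations of the $f_{i_1i_2}$, for $i_1+i_2=n$. These are the coefficients of
\[
P(t_1,t_2)=\sum_{i=0}^n(-1)^i\,a_i(t_1,t_2)\,(t_1x_1+t_2x_2)^{n-i}.
\]

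\textbf{Step 2: a generic direction.} Generically on $U$ we may assume, after a linear change of the frame of $T_X^*$ (a $GL_2$ change of coordinates, under which everything is equivariant), that $t_1$ is a generic direction. Then the single equation
\[
g=P(1,0)=\prod_i (x_1-\alpha_{i1})=x_1^n-a_{10}x_1^{n-1}+\cdots
\]
is monic in $x_1$. So $Z=V(g)\subseteq U\times\mathbb{A}^2$ is a hypersurface in a smooth irreducible threefold, and every component of $Z$ has dimension exactly $2$ by Krull's principal ideal theorem. This alone does not give equidimensionality of $X_b\subseteq Z$, since $X_b$ could be smaller than $Z$.

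\textbf{Step 3: upper bound on dimensions.} By Lemma~\ref{lem: X_b of ChenNgo}(1), $p_b\colon X_b\to X$ is finite. Hence every component of $X_b$ has dimension at most $2$.

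\textbf{Step 4: lower bound (the main obstacle).} It remains to show that every component of $X_b$ has dimension at least $2$, i.e.\ that no embedded or isolated lower-dimensional component occurs. Using equations alone is problematic, since $X_b$ is cut out by $n+1$ equations in a threefold, which a priori only bounds component dimensions below by $3-(n+1)$. I would instead use the structure of $Cayley^n$.

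By Lemma~\ref{lem: X_b of ChenNgo}(2), $Cayley^n(\mathbb{A}^2)$ is universally homeomorphic to $\mathbb{A}^2\times\mathrm{Sym}^{n-1}(\mathbb{A}^2)$, via $(\alpha,[\beta])\mapsto([\alpha,\beta],\alpha)$. Therefore $X_b$ is homeomorphic to the fibre product
\[
X\times_{\mathrm{Sym}^n(T_X^*/X)}\bigl(T_X^*\times_X \mathrm{Sym}^{n-1}(T_X^*/X)\bigr).
\]
The map
\[
T_X^*\times_X\mathrm{Sym}^{n-1}(T_X^*/X)\to\mathrm{Sym}^n(T_X^*/X)
\]
is finite and surjective between irreducible varieties of the same dimension, and its target is normal, being the quotient of a smooth variety by a finite group. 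For a finite surjective morphism $Y\to S$ of irreducible varieties with $S$ normal, every component of the base change along a closed subvariety $Z\subseteq S$ dominates $Z$: this is the going-down property for integral extensions over a normal base, as in Chevalley's dimension theory. Applying this with $Z=b(X)\cong X$ shows that every component of $X_b$ surjects onto $X$, hence has dimension at least $2$.

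I expect the main obstacle to be this going-down argument. It needs the source to be irreducible, hence integral and torsion-free over the base, which holds here because $T^*_X\times_X\mathrm{Sym}^{n-1}$ is irreducible. It also needs to be applied correctly along the non-flat locus. If that fails, I would fall back on showing that $X_b$ is Cohen--Macaulay of dimension $2$ via the monic-polynomial structure in Step 2.
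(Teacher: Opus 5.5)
Your argument is correct and is essentially the paper's: the paper cites universal openness of the finite surjective map $p'$ from the irreducible $Cayley^n(T^*_X/X)$ onto the normal $Sym^n(T^*_X/X)$, which is the same going-down fact you use. From this it gets that $p_b$ is open, so each component of $X_b$ contains a nonempty open subset of $X_b$ whose image is open in $X$, and finiteness then gives dimension $2$. Your Steps 1--2 are unnecessary, and passing to $T^*_X\times_X Sym^{n-1}(T^*_X/X)$ via the universal homeomorphism is simply how the irreducibility of $Cayley^n$, which the paper uses directly, is obtained.
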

\begin{proof}
By Lemma \ref{lem: X_b of ChenNgo}, $Sym^n(T^*_X/X)$ is normal, $Cayley^n(T^*_X/X)$ is irreducible, $p'$ is finite and surjective. By \cite[Theorem 14.131]{gortz2020algebraic}, $p'$ is universally open, then $p_b$ is open. For any irreducible component $Z$ of $X_{b}$, take an open subset $\emptyset\neq U\subseteq Z $ of $X_{b}$, then $p_b(U)$ is open in $X$, then $\dim(U)=2$ since $p_b$ is finite, then $\dim(Z)=2$.
\end{proof}

Let $\mu$ be a partition of $n$. We will write $\mu=(\mu_1^{\alpha_1},\dots,\mu_s^{\alpha_s})$, where $0<\mu_1<\cdots<\mu_s$ and $\sum_{i=1}^s\mu_i\alpha_i=n$. Define a locally closed subset $Sym^n_{\mu}(T_X^*/X)$ of $Sym^n(T_X^*/X)$, whose a $K$-valued point is $[z_1,\dots,z_n]\in Sym^n(T_X^*/X)(K)$ such that there are $\beta_i\neq \beta_{j},1\leq i\neq j\leq  \alpha=\sum_{i=1}^s\alpha_i$, $\{z_1,\dots,z_n\}=\{\beta_1,\dots,\beta_{\alpha}\}$ and
\[
(\#\{j\mid z_j=\beta_i\})_{1\leq i\leq \alpha}=(\underbrace{\mu_1,\dots,\mu_{1}}_{\alpha_1},\dots,\underbrace{\mu_s,\dots,\mu_{s}}_{\alpha_s}).
\]
Then $Sym^n_{(1^n)}(T_X^*/X)(\mathbb{K})=\{[\beta_1,\dots,\beta_n]\in  Sym^n(T_X^*/X)(\mathbb{K})\mid \forall i\neq j,\beta_i\neq \beta_j\}$ and there is a stratification of $Sym^n(T_X^*/X)$:
\[
Sym^n(T_X^*/X)=\bigsqcup_{\mu \text{ ranges over all partitions of } n}Sym^n_{\mu}(T_X^*/X).
\]

Given a partition $\mu=(\mu_1^{\alpha_1},\dots,\mu_s^{\alpha_s})$ of $n$, we define a morphism 
\[
\tau_{\mu}':(\underbrace{T_X^*\times_X\cdots T_X^*}_{\alpha_1})\times_X\cdots\times_X (\underbrace{T_X^*\times_X\cdots T_X^*}_{\alpha_s})\xrightarrow{(\overbrace{Id,\dots,Id}^{\mu_1})\times \cdots \times (\overbrace{Id,\dots,Id}^{\mu_s})} 
\]
\[
\underbrace{T_X^*\times_X\cdots T_X^*}_{n}\to Sym^n(T_X^*/X).
\]
Since $\tau_{\mu}'$ is $\mathfrak{S}_{\alpha_1}\times\cdots\times \mathfrak{S}_{\alpha_s}$-invariant, we have the morphism 
\[
\tau_{\mu}:Sym^{\alpha_1}(T_X^*/X)\times_X\cdots\times_X Sym^{\alpha_s}(T_X^*/X)\to Sym^n(T_X^*/X).
\]

\begin{lemma}[Proposition 3.3, Theorem 3.8 and Lemma 4.2 in \cite{song2023image}]\label{lem: decomposition in SS}\ 

\begin{enumerate}
    \item Given a partition $\mu=(\mu_1^{\alpha_1},\dots,\mu_s^{\alpha_s})$, the morphism $\tau_{\mu}$ gives the normalization $\overline{
Sym^n_{\mu}(T_X^*/X)}$. 
\item Given a spectral datum $b: X\to Sym^n(T_X^*/X)$, suppose the generic point $\eta$ of $X$ is mapped to the stratum $Sym^n_{\mu}(T_X^*/X)$ with $\mu=(\mu_1^{\alpha_1},\dots,\mu_s^{\alpha_s})$. Then
there exist spectral data $$b_i: X\to Sym^{\alpha_i}(T_X^*/X),1\leq i\leq s,$$ such that $b_i(\eta)\in Sym_{(1^{\alpha_i})}^{\alpha_i}(T_X^*/X)$ for $1\leq i\leq s$ and $b$ is
the following composition of morphisms
\[
X\xrightarrow{(b_1,\dots,b_s)} Sym^{\alpha_1}(T_X^*/X)\times_X\cdots\times_X Sym^{\alpha_s}(T_X^*/X)\xrightarrow{\tau_{\mu}} Sym^n(T_X^*/X).
\]
\item Given a pair of spectral data $b_i:X \to Sym^{n_i}(T_X^*/X),i=1,2$, let $(E_i, \phi_i)$ be Higgs bundles on $X$ whose spectral data are $b_i$. Then $(E_1\oplus E_2, \phi_1\oplus \phi_2)$ is a Higgs bundle with the spectral datum $(b_1,b_2): X\to Sym^{n_1}(T_X^*/X)\times_X Sym^{n_2}(T_X^*/X)\to Sym^{n_1+n_2}(T^*_X/X)$.

\end{enumerate}
\end{lemma}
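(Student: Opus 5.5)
The proof takes the three parts in order. Throughout, $X$ is integral and normal, and I use that a morphism from a reduced scheme to a separated scheme is determined by its restriction to a dense open set.

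\emph{Part (1).} First, $\tau_\mu'$ is finite: it is a closed immersion (a product of diagonals $T_X^*\to (T_X^*)^{\mu_i}$ over $X$) followed by the finite quotient map $(T_X^*)^{n}_X\to Sym^n(T_X^*/X)$. Hence $\tau_\mu$ is finite, being induced from $\tau'_\mu$ on the quotient by $\prod\mathfrak{S}_{\alpha_i}$.

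Second, the source $\prod_X Sym^{\alpha_i}(T_X^*/X)$ is a quotient of the smooth variety $(T_X^*)^{\alpha}_X$ by a finite group. It is therefore normal and integral.

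Third, the image of $\tau_\mu$ is exactly $\overline{Sym^n_\mu(T_X^*/X)}$.

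Fourth, $\tau_\mu$ is birational onto this image. Over the open stratum, a $\mathbb{K}$-point $[z_1,\dots,z_n]$ of type $\mu$ determines uniquely which distinct values carry multiplicity $\mu_i$, so $\tau_\mu$ is injective there. It is then generically an isomorphism onto the reduced closure, because we are in characteristic $0$ and can check separability, or unramifiedness at a general point by a local computation in coordinates.

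A finite birational morphism from a normal integral scheme onto an integral scheme is its normalization, which proves (1).

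\emph{Part (2).} Since $b(\eta)\in Sym^n_\mu$ and $X$ is reduced, $b$ factors through the reduced closed subscheme $\overline{Sym^n_\mu(T_X^*/X)}$. Over the open stratum, $\tau_\mu$ is an isomorphism onto its image. This gives a lift of $b$ on a dense open $U\subset X$, and hence a rational map $X\dashrightarrow \prod_X Sym^{\alpha_i}(T_X^*/X)$.

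Let $\Gamma$ be the closure of the graph of this rational map. The projection $\Gamma\to X$ is finite, since $\Gamma$ sits inside $X\times_{\overline{Sym^n_\mu}}\prod_X Sym^{\alpha_i}$, which is finite over $X$ by (1). It is also birational. Because $X$ is normal, Zariski's main theorem (or the universal property of normalization applied to $X$) shows that $\Gamma\to X$ is an isomorphism. This yields $(b_1,\dots,b_s)$ with $\tau_\mu\circ(b_1,\dots,b_s)=b$, the equality holding on $U$ and hence everywhere.

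At $\eta$, the $\alpha$ values $\beta_j$ are pairwise distinct, so each $b_i(\eta)$ lies in $Sym^{\alpha_i}_{(1^{\alpha_i})}$. Each $b_i$ is a spectral datum because $\mathscr{B}_{X,GL_{\alpha_i}}=\operatorname{Sect}(X,Sym^{\alpha_i}(T_X^*/X))$.

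\emph{Part (3).} Both $sd(E_1\oplus E_2,\phi_1\oplus\phi_2)$ and $(b_1,b_2)$ followed by the addition map are morphisms $X\to Sym^{n_1+n_2}(T_X^*/X)$, so it suffices to compare them at the geometric generic point. There the Higgs fields are pairs of commuting matrices, which are block diagonal. Commuting matrices can be simultaneously triangularized, so the joint eigenvalue multiset of the block sum is the union of the joint eigenvalue multisets of the blocks. Equivalently, the polarized characteristic polynomials multiply: $\det(t_1x_1+t_2x_2-\phi)$ of a block sum is the product of the blocks' polynomials. Under $\mathfrak t^2/\!\!/W\cong B$, this is exactly the addition map $Sym^{n_1}\times Sym^{n_2}\to Sym^{n_1+n_2}$.

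The step I expect to be most delicate is the birationality in (1), that is, showing $\tau_\mu$ is generically an isomorphism onto the reduced closure and not merely bijective. I would handle it by a local étale coordinate computation at a point of the stratum, using characteristic $0$.
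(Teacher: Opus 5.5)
The paper does not prove this lemma; it quotes it from Song--Sun. Your outline is the standard one: finite, birational and normal source for (1); closure of a rational section plus normality of $X$ for (2); multiplicativity of the polarized characteristic polynomial for block sums in (3). Parts (1) and (3) are fine as written. However, (2) uses a fact you have not established.

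\textbf{The gap.} In (2) you use that $\tau_\mu$ is an isomorphism over the \emph{whole} open stratum $Sym^n_\mu(T_X^*/X)$. In (1) you only showed two weaker things: injectivity on $\mathbb{K}$-points over the stratum, and birationality. Neither gives what (2) needs.
\begin{itemize}
\item Birationality only gives an isomorphism over some dense open subset of $\overline{Sym^n_\mu(T_X^*/X)}$. But $b(X)$ is a $2$-dimensional subvariety of this $(2\alpha+2)$-dimensional variety, so $b(\eta)$ need not lie in that open set.
\item A finite birational bijection onto a non-normal target need not be an isomorphism; the normalization of a cuspidal curve is an example.
\end{itemize}
So the lift of $b$ on a dense open $U\subset X$ is not yet justified. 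The step you flagged as delicate is actually easy: in characteristic $0$, a finite dominant morphism of varieties that is injective on a dense set of closed points has degree $1$. The real subtlety is this one.

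\textbf{Two ways to close it.}
\begin{enumerate}
\item[(a)] Do your local computation at \emph{every} point of $\tau_\mu^{-1}(Sym^n_\mu)$, not just a general one. \'Etale locally near $\sum_j m_j[\beta_j]$ with the $\beta_j$ distinct, $Sym^n$ looks like $\prod_j Sym^{m_j}$. There $\tau_\mu$ becomes a product of small diagonals $\beta\mapsto m_j[\beta]$. Each small diagonal is a closed immersion in characteristic $0$, because $[z_1,\dots,z_m]\mapsto \frac{1}{m}\sum_k z_k$ is a retraction. Hence over the stratum $\tau_\mu$ is finite, unramified and radicial, so it is a closed immersion, and therefore an isomorphism onto the reduced stratum.
\item[(b)] Bypass the isomorphism. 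Your combinatorial argument works over any algebraically closed field, so the geometric fibre of $\tau_\mu$ over $b(\eta)$ is a single point. Since $\mathrm{char}\,\mathbb{K}=0$, the fibre $\tau_\mu^{-1}(b(\eta))$ is then a local Artinian scheme with residue field $\kappa(b(\eta))$. Consequently the generic fibre of $X\times_{\overline{Sym^n_\mu}}\prod_X Sym^{\alpha_i}(T_X^*/X)\to X$ has a $k(\eta)$-rational point. Take $\Gamma$ to be the closure of that point and conclude with your finite-birational-over-normal argument.
\end{enumerate}
With either patch, the rest of (2) goes through as you wrote it.
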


\section{The case: $SL_n$ for $2\nmid n$}

In this section, we fix $2\nmid n$ and the usual faithful representation $SL_n\to GL_n$.
Note that the category of $GL_n$-bundles on $X$ is equivalent to the category of vector bundles of rank $n$ on $X$. Then the category of $SL_n$-bundles on $X$ is equivalent to the category of vector bundles of rank $n$ with trivial determinant on $X$.
Then a $SL_n$-Higgs bundle is a $GL_n$-Higgs bundle $(E,\theta)$ over $X$ such that $\det(E)\cong \mathscr{O}_X$ and $tr(\theta)=0$. 

Let $Sym^{n,tr=0}(\mathbb{A}^2)=\mathrm{Spec}\mathbb{K}[\sigma_{i_1i_2}\mid 1\leq i_1+i_2\leq n]/(\sigma_{10},\sigma_{01})$ be the closed subscheme of $Sym^n(\mathbb{A}^2)$, where $\sigma_{i_1i_2}$ is defined in \Cref{subsection: notations}.
Let $Sym^{n,tr=0}(T^*_X/X)= Fr(T^*_X)\times_{GL_2}Sym^{n,tr=0}(\mathbb{A}^2)$ be the closed subscheme of $Sym^n(T^*_X/X)$.
Then
\[\mathscr{B}_{X,SL_n}=Sect(X, Sym^{n,tr=0}(T_X^*/X))\]
is a closed subscheme of $\mathscr{B}_{X,GL_n}=Sect(X, Sym^{n}(T_X^*/X))$. 
The Hitchin morphism $h_{X,SL_n}:\mathscr{M}_{X,SL_n}\to \mathscr{A}_{X,SL_n}$ factors through $sd_{X,SL_n}:\mathscr{M}_{X,SL_n}\to \mathscr{B}_{X,SL_n}$.
In this section, we will prove that $sd_{X,SL_n}$ is surjective.

First, we fix $m\geq 1$ and $b\in \mathscr{B}_{X,GL_m}(\mathbb{K})$ such that $b(\eta)\in Sym_{(1^m)}^m(T_X^*/X)$, where $\eta$ is the generic point of $X$.

By Lemma \ref{lem: X_b of ChenNgo}, there is a cartesian diagram
\[\begin{tikzcd}
	{X_{b}} & {Cayley^n(T^*_X/X)} \\
	{X} & {Sym^n(T^*_X/X)}.
	\arrow[from=1-1, to=1-2]
	\arrow["{p_b}", from=1-1, to=2-1]
	\arrow["\square"{description}, draw=none, from=1-1, to=2-2]
	\arrow["{p'}", from=1-2, to=2-2]
	\arrow["{b}", from=2-1, to=2-2]
\end{tikzcd}\]
Let $X_1,\dots,X_r$ be the irreducible components of $X_{b}$. 
For any $1\leq i\leq r$, let $\pi_i':Y_i\to X_i$ be the normalization morphism, $\pi_i=p_b\circ \pi_i':Y_i\to X$ and $\pi_{b}:Y_{b}=\bigsqcup_{1\leq i\leq r} Y_i\to X_{b}\to X$ be the natural morphisms. 

\begin{lemma}\label{lemma: pi flat}
The morphism $\pi_b$ is finite flat.
\end{lemma}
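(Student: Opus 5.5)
Finiteness comes first and is formal. By Lemma \ref{lem: X_b of ChenNgo}(1) the projection $p'$ is finite, so its base change $p_b:X_b\to X$ is finite. Each $X_i$ is a closed subscheme of $X_b$ (with reduced structure), so $X_i\to X$ is finite. The normalization $\pi_i':Y_i\to X_i$ is finite because $X_i$ is an integral scheme of finite type over $\mathbb{K}$, hence excellent (Japanese). Therefore each $\pi_i=p_b\circ\pi_i'$ is finite, and so is $\pi_b$, since $Y_b$ is a finite disjoint union of the $Y_i$.

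For flatness we may treat each $\pi_i:Y_i\to X$ separately, because $\pi_{b*}\mathscr{O}_{Y_b}=\bigoplus_i \pi_{i*}\mathscr{O}_{Y_i}$ and flatness of a finite morphism is equivalent to local freeness of the pushforward of the structure sheaf. The argument for each $i$ runs as follows.

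\begin{enumerate}
\item $\dim Y_i=2$. By Lemma \ref{lem: X_b eqdim} every component $X_i$ has dimension $2$, and normalization preserves dimension.
\item $Y_i$ is Cohen--Macaulay. $Y_i$ is a normal surface, so it satisfies Serre's condition $(S_2)$ (this also follows from Lemma \ref{lemma: reflexive to (S_2)} applied to $\mathscr{F}=\mathscr{O}_{Y_i}$). Since $\dim Y_i=2$, the local rings at closed points have dimension $2$ and depth $2$, so $\mathscr{O}_{Y_i}$ is Cohen--Macaulay.
\item $\pi_{i*}\mathscr{O}_{Y_i}$ is Cohen--Macaulay over $X$. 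Fix a closed point $x\in X$ and set $A=\mathscr{O}_{X,x}$ and $B=(\pi_{i*}\mathscr{O}_{Y_i})_x$, a finite $A$-algebra. $B$ is semilocal, not local, so we apply Lemma \ref{lemma: A to B, E CM} after localizing at each maximal ideal of $B$, or equivalently we use that depth over $A$ of a finite $B$-module is the minimum of the depths at the maximal ideals of $B$. This shows $B$ is a Cohen--Macaulay $A$-module of dimension $2=\dim A$.
\item Local freeness. Since $A$ is regular, Lemma \ref{lemma: CM iff free} gives that $B$ is a free $A$-module, so $\pi_{i*}\mathscr{O}_{Y_i}$ is locally free and $\pi_i$ is flat. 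This is the same argument already used at the start of the proof of Lemma \ref{fomular}, with $D=0$.
\end{enumerate}

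The only delicate point is the semilocal bookkeeping in step 3, since Lemma \ref{lemma: A to B, E CM} is stated for local homomorphisms. I would handle it by noting that $\mathfrak{m}_A B$ is contained in the Jacobson radical of $B$. Then an $A$-regular sequence in $\mathfrak{m}_A$ of length $2$ on $B$ can be detected at each localization $B_{\mathfrak{n}}$. Alternatively, I would invoke the Auslander--Buchsbaum formula, $\mathrm{pd}_A B=2-\mathrm{depth}_A B=0$.
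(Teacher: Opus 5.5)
Your proposal is correct and takes the same route as the paper. The paper's proof only notes that $\dim Y_i=2$ by Lemma~\ref{lem: X_b eqdim}, that $\pi_i$ is finite, and that a finite morphism from a normal surface to a smooth surface is flat; your argument unpacks this last step via Cohen--Macaulayness over a regular base (Lemmas~\ref{lemma: reflexive to (S_2)}, \ref{lemma: A to B, E CM}, \ref{lemma: CM iff free}), fills in the finiteness of normalization and the semilocal depth bookkeeping that the paper leaves implicit, and handles both correctly.
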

\begin{proof}
By Lemma \ref{lem: X_b eqdim}, $\dim(Y_i)=2$. Note that $\pi_i$ is finite. Since the surface $Y_i$ is normal and $X$ is smooth, $\pi_i$ is flat. So $\pi_b$ is finite flat.
\end{proof}

\begin{lemma}\label{lem: det^2(pi_b*_Y)=L}
The $\mathscr{O}_X$-module ${\pi_{b}}_*\mathscr{O}_{Y_{b}}$ is locally free and $\det({\pi_{b}}_*\mathscr{O}_{Y_{b}})^2\cong \mathscr{O}_X({-\pi_{b}}_*R_{b})$ for some cycle $R_{b}\in Z^1(Y_{b})$.
\end{lemma}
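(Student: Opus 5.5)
Local freeness of ${\pi_b}_*\mathscr{O}_{Y_b}$ is the case $D=0$ of Lemma \ref{fomular}; alternatively it follows from Lemma \ref{lemma: pi flat}, since a finite flat morphism to a noetherian scheme has locally free pushforward. The content of the statement is therefore the determinant identity. The plan is to realize $\det({\pi_b}_*\mathscr{O}_{Y_b})^{-2}$ as the line bundle of the discriminant divisor of the trace form, and then to show that this discriminant divisor is the pushforward of a ramification cycle on $Y_b$.

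\textbf{Step 1: the discriminant section.} Put $\mathscr{E}={\pi_b}_*\mathscr{O}_{Y_b}$, a locally free $\mathscr{O}_X$-algebra of rank $m$. The trace pairing $\mathscr{E}\otimes\mathscr{E}\to\mathscr{O}_X$, $(f,g)\mapsto \mathrm{tr}(fg)$, gives a morphism $\mathscr{E}\to\mathscr{E}^{\vee}$. Taking determinants yields a section $\delta\in H^0(X,\det(\mathscr{E})^{-2})$. Since $b(\eta)\in Sym^m_{(1^m)}$, Lemma \ref{lem: X_b of ChenNgo}(1) shows that $X_b\to X$ is étale at the generic point. Normalization does not change the generic fibre, so $Y_b\to X$ is generically étale, and in characteristic $0$ the trace form is then generically nondegenerate. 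Hence $\delta\neq0$ and $\det(\mathscr{E})^{-2}\cong\mathscr{O}_X(\Delta)$, where $\Delta=\mathrm{div}(\delta)$ is an effective divisor. Equivalently, $\det(\mathscr{E})^{2}\cong\mathscr{O}_X(-\Delta)$.

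\textbf{Step 2: identifying $\Delta$ as a pushforward.} Both sides of the desired isomorphism are determined in codimension one, so it suffices to work over an open $U\subseteq X$ with $\mathrm{codim}(X\setminus U)=2$, as in the proof of Lemma \ref{fomular}. Choose $U$ so that $Y'=\pi_b^{-1}(U)$ is regular; this is possible because each $Y_i$ is normal, so its singular locus consists of finitely many points. Over $U$, the morphism $\pi_b$ is a finite flat morphism of regular surfaces, so Lemma \ref{lemma: dualizing sheaf} applies. By part (2), $\omega_{\pi}=\mathscr{H}om_{\mathscr{O}_U}(\mathscr{E},\mathscr{O}_U)$ corresponds to $\mathscr{E}^{\vee}$, and the trace map $\mathscr{O}_{Y'}\to\omega_{\pi}$ is injective. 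By parts (4) and (5) with $U$ smooth, $\omega_\pi\cong\omega_{Y'}\otimes\pi^*\omega_U^{-1}$ is invertible. Its cokernel is therefore $\mathscr{O}_{R}$ for an effective Cartier divisor $R$ on $Y'$, the ramification divisor, and $\omega_{\pi}\cong\mathscr{O}_{Y'}(R)$.

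Pushing forward, the map $\mathscr{E}\to\mathscr{E}^{\vee}={\pi}_*\omega_\pi$ is exactly ${\pi}_*$ of this inclusion. This gives a short exact sequence
\[
0\to\mathscr{E}\to\pi_*\mathscr{O}_{Y'}(R)\to\pi_*\mathscr{O}_R\to0 .
\]
Lemma \ref{fomular} (or its proof, which handles the torsion term) then gives $\mathscr{O}_U(\Delta)=\det(\mathscr{E}^{\vee})\otimes\det(\mathscr{E})^{-1}\cong\mathscr{O}_U(\pi_*R)$. Let $R_b$ be the closure of $R$ in $Y_b$; the two divisors agree on $U$. Extending across codimension two gives $\det(\mathscr{E})^2\cong\mathscr{O}_X(-{\pi_b}_*R_b)$.

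\textbf{Main obstacle.} The delicate point is the identification of $\mathscr{E}^\vee$ with $\pi_*\omega_\pi$ in such a way that the trace-form map corresponds to $\pi_*$ of the canonical inclusion $\mathscr{O}_{Y'}\hookrightarrow\omega_\pi$. This requires checking that the trace element $\mathrm{tr}\in\mathscr{H}om(\mathscr{E},\mathscr{O})$ is the image of $1$ under the structure map of Lemma \ref{lemma: dualizing sheaf}(1). If that bookkeeping proves awkward, I would instead compute directly: in codimension one, $\mathscr{O}_{X}$ localizes to a DVR, and there the order of the discriminant of each local extension equals the degree of the local different. This is the classical Dedekind discriminant–different formula, and it gives $\Delta=\pi_*R$ with $R$ the different divisor.
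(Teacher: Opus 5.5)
Your argument is correct and is essentially the paper's: on the regular locus the dualizing sheaf $\omega_\pi=\pi^!\mathscr{O}_U$ is invertible, so it is $\mathscr{O}(R)$ for some cycle $R$, and $\pi_*\omega_\pi\cong\mathscr{E}^\vee$; Lemma \ref{fomular} together with extension across codimension two then gives $\det(\mathscr{E})^{-2}\cong\mathscr{O}_X(\pi_{b*}R_b)$. Several parts of your proposal are not needed, namely the discriminant section of Step 1, the effectivity of $R$, and the trace-compatibility check you flag as the main obstacle: only the abstract isomorphism $\pi_*\mathscr{O}(R)\cong\mathscr{E}^\vee$ enters the determinant computation, and the paper simply takes $R_b$ to be any cycle representing $f^!\mathscr{O}_U$.
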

\begin{proof}
By Lemma \ref{lemma: pi flat}, $\pi_b$ is finite flat, then ${\pi_{b}}_*\mathscr{O}_{Y_{b}}$ is locally free.
Let $U=X\setminus\pi_b(Y_b^{sing})$, $W=\pi_b^{-1}(U)$, and $f=\pi_b|_{W}$. For any affine open subset $V$ of $U$, $f|_{{W}_V}:{W}_V=f^{-1}(V)\to V$ is finite, then $f|_{{W}_V}$ is a flat finite projective local complete intersection, then $({f}^!\mathscr{O}_U)|_{{W}_V}\cong {f|_{{W}_V}}^!\mathscr{O}_V$ is an invertible sheaf by Lemma \ref{lemma: dualizing sheaf}. Hence ${f}^!\mathscr{O}_U$ is an invertible sheaf. 
By Lemma \ref{lemma: dualizing sheaf}, $f_*f^!\mathscr{O}_U\cong (f_*\mathscr{O}_{W})^{\vee}$.
Since $W$ is open in $Y_b$ and $\dim(Y_b\setminus W)=0$, $Z^1(Y_b)\cong Z^1(W)$. 
Let $f^!\mathscr{O}_U=\mathscr{O}_{W}(R_b|_W)$ for some cycle $R_b\in Z^1(Y_b)$.
Since $f_*f^!\mathscr{O}_U\cong (f_*\mathscr{O}_{W})^{\vee}$, we have $({\pi_b}_*\mathscr{O}_{Y_b}(R_b))|_U\cong f_*\mathscr{O}_{W}(R_b|_W)=f_*f^!\mathscr{O}_U\cong (f_*\mathscr{O}_{W})^{\vee}\cong ({\pi_b}_*\mathscr{O}_{Y_b})^{\vee}|_U$.
Since $\dim(X\setminus U)=0$ and ${\pi_{b}}_*\mathscr{O}_{Y_{b}}$ is locally free, we have ${\pi_b}_*\mathscr{O}_{Y_b}(R_b)\cong ({\pi_b}_*\mathscr{O}_{Y_b})^{\vee}$, then $\det({\pi_b}_*\mathscr{O}_{Y_b}(R_b))\cong (\det({\pi_b}_*\mathscr{O}_{Y_b}))^{\vee}$.
By Lemma \ref{fomular}, we have $\det({\pi_b}_*\mathscr{O}_{Y_b}(R_b))\cong \det({\pi_b}_*\mathscr{O}_{Y_b})\otimes \mathscr{O}_X({\pi_b}_*R_b)$. Hence  $\det({\pi_{b}}_*\mathscr{O}_{Y_{b}})^2\cong \mathscr{O}_X({-\pi_{b}}_*R_{b})$.
\end{proof}

\begin{lemma}\label{lem: det(pi_b*_Y(sD))}
For any $s\in \mathbb{Z}$, we see that $({\pi_{b}}_*\mathscr{O}_{Y_{b}}(sR_{b}),\theta_{b})\in h_{X,GL_m}^{-1}(b)$ and  $\det({\pi_{b}}_*\mathscr{O}_{Y_{b}}(sR_{b}))\cong \det({\pi_{b}}_*\mathscr{O}_{Y_{b}})^{1-2s}$, where Higgs field $\theta_{b}:\mathscr{T}_X\to {p_b}_*\mathscr{O}_{X_{b}}\to \mathscr{E}nd_{\mathscr{O}_X}({\pi_{b}}_*\mathscr{O}_{Y_{b}}(sR_{b}))$.
\end{lemma}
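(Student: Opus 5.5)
The proof has two independent parts: first show that $\mathscr{F}:=\mathscr{O}_{Y_b}(sR_b)$, pushed forward to $X_b$, is a maximal Cohen--Macaulay sheaf of generic rank $1$ on $X_b$; then compute its determinant using Lemma \ref{fomular} and Lemma \ref{lem: det^2(pi_b*_Y)=L}.

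\emph{Membership in the fibre.} Write $\nu:Y_b=\bigsqcup_i Y_i\to X_b$ for the normalization, so that $\pi_b=p_b\circ\nu$ and $\pi_{b*}\mathscr{F}=p_{b*}(\nu_*\mathscr{F})$. By Lemma \ref{lem: X_b of ChenNgo}(4) it suffices to show that $\nu_*\mathscr{F}$ is a maximal Cohen--Macaulay sheaf of generic rank $1$ on $X_b$.
\begin{enumerate}
\item Each $Y_i$ is a normal surface and $\mathscr{F}|_{Y_i}$ is reflexive. By Lemma \ref{lemma: reflexive to (S_2)} it is $(S_2)$, hence Cohen--Macaulay of dimension $2$ at every closed point.
\item Since $\nu$ is finite, Lemma \ref{lemma: A to B, E CM}, applied to the local homomorphisms $\mathscr{O}_{X_b,x}\to(\nu_*\mathscr{O}_{Y_b})_x$ (localizing at the maximal ideals over $x$), shows that $\nu_*\mathscr{F}$ is Cohen--Macaulay over $\mathscr{O}_{X_b,x}$.
\item Its support is all of $X_b$, because $\nu$ is surjective and $X_b$ is equidimensional by Lemma \ref{lem: X_b eqdim}. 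Hence $\nu_*\mathscr{F}$ is maximal Cohen--Macaulay.
\item Generic rank $1$: over the generic point $\eta$, $p_b$ is étale because $b(\eta)\in Sym^m_{(1^m)}$. So $X_b$ is generically reduced and $\nu$ is birational, and $\mathscr{F}$ has rank $1$ on each $Y_i$.
\end{enumerate}
Therefore $(\pi_{b*}\mathscr{F},\theta_b)\in h^{-1}_{X,GL_m}(b)$, where $\theta_b$ is obtained as in Lemma \ref{lem: X_b of ChenNgo}(4). The endomorphism action factors through ${p_b}_*\mathscr{O}_{X_b}\to{\pi_b}_*\mathscr{O}_{Y_b}$, acting by multiplication.

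\emph{Determinant.} Lemma \ref{fomular} gives
\[
\det(\pi_{b*}\mathscr{O}_{Y_b}(sR_b))\cong\det(\pi_{b*}\mathscr{O}_{Y_b})\otimes\mathscr{O}_X(\pi_{b*}(sR_b)).
\]
Since $\pi_{b*}$ is additive on cycles, $\mathscr{O}_X(\pi_{b*}(sR_b))=\mathscr{O}_X(\pi_{b*}R_b)^{\otimes s}$. Lemma \ref{lem: det^2(pi_b*_Y)=L} gives $\mathscr{O}_X(\pi_{b*}R_b)\cong\det(\pi_{b*}\mathscr{O}_{Y_b})^{-2}$. Substituting, the right-hand side becomes $\det(\pi_{b*}\mathscr{O}_{Y_b})^{1-2s}$, as claimed.

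\emph{Main obstacle.} The only delicate point is the first part: transferring the Cohen--Macaulay property from the normalization $Y_b$ down to $X_b$, and making sure that "generic rank $1$" holds on every component of $X_b$. The generic étaleness of $p_b$ is what settles the rank statement. The determinant computation itself is formal.
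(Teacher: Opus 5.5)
Your proposal is correct and follows the paper's proof essentially step by step. You show that a reflexive sheaf on each normal $Y_i$ is $(S_2)$, hence maximal Cohen--Macaulay; you transfer this to $X_b$ via Lemma \ref{lemma: A to B, E CM} along the finite surjection $Y_b\to X_b$; you conclude membership in $h_{X,GL_m}^{-1}(b)$ by Lemma \ref{lem: X_b of ChenNgo}; and you get the determinant from Lemma \ref{fomular} and Lemma \ref{lem: det^2(pi_b*_Y)=L}. Your argument for generic rank $1$ goes slightly further than the paper: generic \'etaleness of $p_b$ makes $X_b$ generically reduced and $Y_b\to X_b$ birational, a point the paper only asserts.
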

\begin{proof}
For any $1\leq i\leq r$, $Y_i$ is normal and $\mathscr{O}_{Y_{b}}(sR_{b})|_{Y_i}$ is reflexive, then $\mathscr{O}_{Y_{b}}(sR_{b})|_{Y_i}$ is a Cohen--Macaulay $\mathscr{O}_{Y_i}$-module by Lemma \ref{lemma: reflexive to (S_2)}. Since the support of $\mathscr{O}_{Y_{b}}(sR_{b})|_{Y_i}$ is $Y_i$, $\mathscr{O}_{Y_{b}}(sR_{b})|_{Y_i}$ is maximal Cohen--Macaulay. So $\mathscr{O}_{Y_{b}}(sR_{b})$ is a maximal Cohen--Macaulay $\mathscr{O}_{Y_b}$-module of generic rank $1$.
Since the natural morphism $f:Y_b=\bigsqcup_{1\leq i\leq r} Y_i\to \bigsqcup_{1\leq i\leq r} X_i\to X_b$ is finite surjective, we see that $f_*\mathscr{O}_{Y_{b}}(sR_{b})$ is a maximal Cohen--Macaulay $\mathscr{O}_{X_b}$-module of generic rank $1$ by Lemma \ref{lemma: A to B, E CM}.
Since $\pi_b=p_b\circ f$, we have ${\pi_{b}}_*\mathscr{O}_{Y_{b}}(sR_{b})\cong {p_{b}}_*(f_*\mathscr{O}_{Y_{b}}(sR_{b}))$, then $({\pi_{b}}_*\mathscr{O}_{Y_{b}}(sR_{b}),\theta_{b})\in h_{X,GL_m}^{-1}(b)$ by Lemma \ref{lem: X_b of ChenNgo}.
By Lemma \ref{fomular} and Lemma \ref{lem: det^2(pi_b*_Y)=L}, we have
$\det({\pi_{b}}_*\mathscr{O}_{Y_{b}}(sR_{b}))\cong \det({\pi_b}_*\mathscr{O}_{Y_b})\otimes \mathscr{O}_X({\pi_b}_*sR_b)\cong \det({\pi_{b}}_*\mathscr{O}_{Y_{b}})^{1-2s}$.
\end{proof}

\begin{theorem}\label{thm: sd_SL(2n+1) surj}
     The morphism $sd_{X,SL_n}$ is surjective.
\end{theorem}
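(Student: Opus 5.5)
Let $b\in\mathscr{B}_{X,SL_n}(\mathbb{K})\subset\mathscr{B}_{X,GL_n}(\mathbb{K})$ be a spectral datum, with $n$ odd. We need an $SL_n$-Higgs bundle with spectral datum $b$, i.e.\ a $GL_n$-Higgs bundle $(E,\theta)$ with spectral datum $b$ and $\det E\cong\mathscr{O}_X$. The trace condition $\operatorname{tr}\theta=0$ should come for free: $\operatorname{tr}\theta$ is the coefficient $\sigma_{10},\sigma_{01}$ part of $h_{X,GL_n}(E,\theta)=b$, and this vanishes because $b$ factors through $Sym^{n,tr=0}(T_X^*/X)$.

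First, apply Lemma \ref{lem: decomposition in SS}(2) to write $b=\tau_\mu\circ(b_1,\dots,b_s)$ with $\mu=(\mu_1^{\alpha_1},\dots,\mu_s^{\alpha_s})$ and $b_i(\eta)\in Sym^{\alpha_i}_{(1^{\alpha_i})}(T_X^*/X)$. For each $i$, set $L_i=\det({\pi_{b_i}}_*\mathscr{O}_{Y_{b_i}})$. For any integers $s_{ij}$ with $1\le j\le\mu_i$, Lemma \ref{lem: det(pi_b*_Y(sD))} gives $GL_{\alpha_i}$-Higgs bundles $(E_{ij},\theta_{ij})=({\pi_{b_i}}_*\mathscr{O}_{Y_{b_i}}(s_{ij}R_{b_i}),\theta_{b_i})$ with spectral datum $b_i$ and $\det E_{ij}\cong L_i^{1-2s_{ij}}$. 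By Lemma \ref{lem: decomposition in SS}(3), iterated, together with the definition of $\tau_\mu$ (which repeats each $b_i$ exactly $\mu_i$ times), the direct sum $(E,\theta)=\bigoplus_{i}\bigoplus_{j=1}^{\mu_i}(E_{ij},\theta_{ij})$ is a $GL_n$-Higgs bundle with spectral datum $b$. Its determinant is $\prod_i L_i^{\sum_j(1-2s_{ij})}$.

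Next, we make the determinant trivial. Twisting by a line bundle $M$, i.e.\ replacing $(E,\theta)$ by $(E\otimes M,\theta)$, does not change the spectral datum, since the Higgs field and hence the characteristic polynomial are unchanged, and it multiplies the determinant by $M^{n}$. Writing $k_i=\sum_j(1-2s_{ij})$, we have $\det E=\prod_i L_i^{k_i}$, and we need this to be an $n$-th power. The integer $k_i$ can be any integer congruent to $\mu_i$ modulo $2$. If we could take every $k_i=n$, we would be done with $M=\bigl(\prod_i L_i\bigr)^{-1}$. This requires $n\equiv\mu_i\pmod 2$ for all $i$, which fails in general. So we instead use Lemma \ref{lem: det^2(pi_b*_Y)=L}: $L_i^2\cong\mathscr{O}_X(-{\pi_{b_i}}_*R_{b_i})$, and more generally Lemma \ref{fomular} lets us change $\det$ by $\mathscr{O}_X(\pi_*D)$ for an arbitrary cycle $D$. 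Since only the parity of $k_i$ is constrained and $L_i^2$ is absorbed by choosing $s_{ij}$, what matters is $L_i$ modulo squares. The plan is to choose the $k_i$ so that $\prod_i L_i^{k_i}\cong L^{n}$ for some $L$. Since $n$ is odd, take $k_i=n\mu_i$, which has the same parity as $\mu_i$. Then $\det E=\bigl(\prod_i L_i^{\mu_i}\bigr)^{n}$ and $M=\bigl(\prod_i L_i^{\mu_i}\bigr)^{-1}$ works. Concretely, for each $i$ we need $s_{ij}$ with $\sum_{j=1}^{\mu_i}(1-2s_{ij})=n\mu_i$, i.e.\ $\sum_j s_{ij}=\mu_i(1-n)/2$, which is an integer because $n$ is odd. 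For example, take $s_{ij}=(1-n)/2$ for all $j$.

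The main obstacle is this parity bookkeeping: one has to confirm that $k_i$ can be prescribed freely within the correct parity class, which is where oddness of $n$ enters. As a sanity check, the uniform choice $s_{ij}=(1-n)/2$ gives $\det E_{ij}=L_i^{n}$ directly, so each summand individually has determinant an $n$-th power, and $E\otimes\bigl(\prod_i L_i^{\mu_i}\bigr)^{-1}$ has trivial determinant. The remaining routine check is that tensoring by a line bundle preserves the Higgs structure and the spectral datum, and that $\operatorname{tr}\theta=0$ follows from $b\in\mathscr{B}_{X,SL_n}$. This yields a preimage of $b$ under $sd_{X,SL_n}$.
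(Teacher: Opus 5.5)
Your proof is correct. It uses the same ingredients as the paper but a cleaner route:

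\begin{itemize}
\item the decomposition $b=\tau_\mu\circ(b_1,\dots,b_s)$;
\item the bundles ${\pi_{b_i}}_*\mathscr{O}_{Y_{b_i}}(sR_{b_i})$, whose determinant is $L_i^{1-2s}$ by Lemma \ref{lem: det(pi_b*_Y(sD))};
\item direct sums, via Lemma \ref{lem: decomposition in SS};
\item a twist by a line bundle.
\end{itemize}

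The difference is where the twist is applied. The paper twists block by block:
\begin{itemize}
\item for $\mu_i\alpha_i$ odd, it takes $s=(\alpha_i+1)/2$, so the determinant is $L_i^{-\alpha_i}$, and twists that rank-$\alpha_i$ block by $L_i$;
\item for $\mu_i$ even, it pairs summands with inverse determinants;
\item for $\mu_i$ odd and $\alpha_i$ even, it merges $b_{k'}$ with some $b_k$ having $\mu_k\alpha_k$ odd into an odd-degree datum $b'$, and twists that.
\end{itemize}
In the paper, oddness of $n$ is used only to guarantee that such a $k$ exists.

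You use oddness of $n$ directly. The choice $s=(1-n)/2$ makes every summand's determinant $L_i^{\,n}$, so $\det E$ is an $n$-th power. A single twist of the whole rank-$n$ bundle by $\bigl(\bigotimes_i L_i^{\mu_i}\bigr)^{-1}$ then trivializes it, and the spectral datum and $\operatorname{tr}\theta=0$ are unaffected, as you note.

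This removes the case analysis and the merging step. It also sidesteps the bookkeeping problems in the paper's written proof:
\begin{itemize}
\item only one index of $\Lambda_3$ is treated, and the others appear with $\mu_i/2$ copies although $\mu_i$ is odd;
\item by the lemma, ${\pi_{b_i}}_*\mathscr{O}_{Y_{b_i}}(-R_{b_i})$ has determinant $L_i^{3}$, so $s=0$ rather than $s=-1$ is what is needed for the pairing.
\end{itemize}

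The blockwise construction only gains that each group of summands is already determinant-trivial, which the theorem does not need. Your middle discussion of arbitrary cycles $D$ and of $L_i$ modulo squares is superfluous. The uniform choice $s_{ij}=(1-n)/2$ is all that the argument uses.
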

\begin{proof}
For any $b\in \mathscr{B}_{X,SL_n}(\mathbb{K})$, then there is a partition $\mu=(\mu_1^{\alpha_1},\dots,\mu_s^{\alpha_s})$ of $n$ such that 
$0<\mu_1<\cdots<\mu_s,0<\alpha_i,\sum_{i=1}^s\mu_i\alpha_i=n$ and $b(\eta)\in Sym^n_{\mu}(T_X^*/X)$. 
By Lemma \ref{lem: decomposition in SS}, there exist spectral data $$b_i: X\to Sym^{\alpha_i}(T_X^*/X),1\leq i\leq s,$$
such that $b_i(\eta)\in Sym_{(1^{\alpha_i})}^{\alpha_i}(T_X^*/X)$ for $1\leq i\leq s$ and $b$ is
the following composition of morphisms
\[
X\xrightarrow{(b_1,\dots,b_s)} Sym^{\alpha_1}(T_X^*/X)\times_X\cdots\times_X Sym^{\alpha_s}(T_X^*/X)\xrightarrow{\tau_{\mu}} Sym^n(T_X^*/X).
\]

Let $\Lambda_1=\{1\leq i\leq s\mid 2\mid \mu_i\},\Lambda_2=\{1\leq i\leq s\mid 2\nmid \mu_i\alpha_i\}$ and $\Lambda_3=\{1\leq i\leq s\mid 2\nmid \mu_i,2\mid \alpha_i\}$.
For any $i\in \Lambda_2$, there is $(\mathscr{E}_i,\theta_{b_i})\in h_{X,GL_{\alpha_i}}^{-1}(b_i)$ where $\mathscr{E}_i={\pi_{b_i}}_*\left(\mathscr{O}_{Y_{b_i}}(\frac{\alpha_i+1}{2}R_{b_i})\right)$ and $\det(\mathscr{E}_i)\cong \det({\pi_{b_i}}_*\mathscr{O}_{Y_{b_i}})^{-\alpha_i}$ by Lemma \ref{lem: det(pi_b*_Y(sD))}.
Then $(\mathscr{E}_i',\theta_{b_i}')\in h_{X,GL_{\alpha_i}}^{-1}(b_i)$ and $\det(\mathscr{E}_i')\cong \mathscr{O}_{X}$, where $\mathscr{E}_i'=\mathscr{E}_i\otimes_{\mathscr{O}_X}\det({\pi_{b_i}}_*\mathscr{O}_{Y_{b_i}})$ and $\theta_{b_i}'$ is induced by $\theta_{b_i}$.
For any $i\in \Lambda_1\cup \Lambda_3$, there are $(\mathscr{E}_i,\theta_{b_i}),(\mathscr{E}_i',\theta_{b_i}')\in h_{X,GL_{\alpha_i}}^{-1}(b_i)$ where $\mathscr{E}_i={\pi_{b_i}}_*\mathscr{O}_{Y_{b_i}}(R_{b_i}),\mathscr{E}_i'={\pi_{b_i}}_*\mathscr{O}_{Y_{b_i}}(-R_{b_i})$ and  $\det(\mathscr{E}_i')\cong \det({\pi_{b_i}}_*\mathscr{O}_{Y_{b_i}})\cong \det(\mathscr{E}_i)^{-1}$ by Lemma \ref{lem: det(pi_b*_Y(sD))}.

If $\Lambda_3=\emptyset$, let $\mathscr{E}=(\oplus_{i\in \Lambda_1}(\mathscr{E}_i^{\oplus \frac{\mu_i}{2}}\oplus {\mathscr{E}_i'}^{\oplus \frac{\mu_i}{2}}))\oplus (\oplus_{j\in \Lambda_2}{\mathscr{E}_j'}^{\oplus \mu_j})$ and $\theta=(\oplus_{i\in \Lambda_1}(\theta_{b_i}^{\oplus \frac{\mu_i}{2}}\oplus {\theta_{b_i}'}^{\oplus \frac{\mu_i}{2}}))\oplus (\oplus_{j\in \Lambda_2}{\theta_{b_j}'}^{\oplus \mu_j})$, then $(\mathscr{E},\theta)\in h_{X,GL_n}^{-1}(b)$ by Lemma \ref{lem: decomposition in SS} and $\det(\mathscr{E})\cong \mathscr{O}_X$, then $(\mathscr{E},\theta)\in h_{X,SL_n}^{-1}(b)$.
If $\Lambda_3\neq \emptyset$, then $\Lambda_2\neq \emptyset$ since $2\nmid n$. Take $k\in \Lambda_2,k'\in \Lambda_3$. Then $2\nmid m$ where $m=\alpha_k+\alpha_{k'}$ and there is $b'\in \mathscr{B}_{X,GL_m}(\mathbb{K})$ such that
\[
b':X\xrightarrow{(b_k,b_{k'})} Sym^{\alpha_k}(T_X^*/X)\times_X Sym^{\alpha_{k'}}(T_X^*/X)\to Sym^m(T_X^*/X).
\]
Thus, there is $(\mathscr{E}_{b'},\theta_{b'})\in h_{X,GL_{m}}^{-1}(b')$ where $\mathscr{E}_{b'}={\pi_{b'}}_*\left(\mathscr{O}_{Y_{b'}}(\frac{m+1}{2}R_{b'})\right)$ and  $\det(\mathscr{E}_{b'})\cong \det({\pi_{b'}}_*\mathscr{O}_{Y_{b'}})^{-m}$ by Lemma \ref{lem: det(pi_b*_Y(sD))}.
Then $(\mathscr{E}_{b'}',\theta_{b'}')\in h_{X,GL_{m}}^{-1}(b')$ and $\det(\mathscr{E}_{b'}')\cong \mathscr{O}_{X}$, where $\mathscr{E}_{b'}'=\mathscr{E}_{b'}\otimes_{\mathscr{O}_X}\det({\pi_{b'}}_*\mathscr{O}_{Y_{b'}})$ and $\theta_{b'}'$ is induced by $\theta_{b'}$. 
Let 
\begin{align*}
    \mathscr{E}'=&\left(\oplus_{i\in \Lambda_1\cup \Lambda_3\setminus\{k'\}}(\mathscr{E}_i^{\oplus \frac{\mu_i}{2}}\oplus {\mathscr{E}_i'}^{\oplus \frac{\mu_i}{2}})\right)\oplus \left(\oplus_{j\in \Lambda_2\setminus\{k\}}{\mathscr{E}_j'}^{\oplus \mu_j}\right)\\
   & \oplus \left(\oplus_{l\in \mathbb{Z},0<2l\leq \mu_{k'}-1 }(\mathscr{E}_{k'}\oplus {\mathscr{E}_{k'}'})\right)\oplus\left(\oplus_{l\in \mathbb{Z},0<l\leq \mu_{k}-1 }\mathscr{E}_k'\right)\oplus\mathscr{E}_{b'}',\\
   \theta'=&\left(\oplus_{i\in \Lambda_1\cup \Lambda_3\setminus\{k'\}}(\theta_{b_i}^{\oplus \frac{\mu_i}{2}}\oplus {\theta_{b_i}'}^{\oplus \frac{\mu_i}{2}})\right)\oplus \left(\oplus_{j\in \Lambda_2\setminus\{k\}}\theta_{b_j}^{\oplus \mu_j}\right)\\
   &\oplus \left(\oplus_{l\in \mathbb{Z},0<2l\leq \mu_{k'}-1 }(\theta_{k'}\oplus {\theta_{k'}'})\right)\oplus\left(\oplus_{l\in \mathbb{Z},0<l\leq \mu_{k}-1 }\theta_k\right)\oplus\theta_{b'}'.
\end{align*}
Then $(\mathscr{E}',\theta')\in h_{X,GL_n}^{-1}(b)$ by Lemma \ref{lem: decomposition in SS} and $\det(\mathscr{E}')\cong \mathscr{O}_X$, then $(\mathscr{E}',\theta')\in h_{X,SL_n}^{-1}(b)$.
\end{proof}

\section{The case: $X=C_1\times_{\mathbb{K}} C_2$}

In this section, we fix $X=C_1\times_{\mathbb{K}} C_2$ where $C_i$ is a smooth projective irreducible curve over $\mathbb{K}$ for $i=1,2$, and we will prove that $sd_{X,G}$ is surjective for $G=SL_n,Sp_{2n}$.

For $i=1,2$, let $\{U_{ij}=\mathrm{Spec}A_{ij}\}_{j\in \Lambda_i}$ be an affine open cover of $C_i$ such that  $\mathscr{T}_{C_i}|_{U_{ij}}\cong \widetilde{A_{ij}x_{ij}}$.
Let $\mathcal{U}_i=U_{1i'}\times_{\mathbb{K}} U_{2i''}$ for any $i=(i',i'')\in \Lambda=\Lambda_1\times \Lambda_2$.
Then $X=\cup_{i\in \Lambda}\mathcal{U}_i$ and $T_X^*|_{\mathcal{U}_i}\cong \mathrm{Spec}\ A_{i}[x_{1i},x_{2i}]$ for any $i=(i',i'')\in \Lambda$ where $A_{i}=A_{1i'}\otimes_{\mathbb{K}} A_{2i''}$.

\subsection{The case: $SL_{n}$}

Let $b\in \mathscr{B}_{X,GL_n}(\mathbb{K})$ such that $b(\eta)\in \{[\alpha_1,\dots,\alpha_n]\in Sym^m(\mathbb{A}^2_{\bar{k(\eta)}})\mid  \alpha_i\neq \alpha_j,i\neq j\}$, where $\eta$ is the generic point of $X$. Let $X_{b}$ be the spectral cover of $X$ with spectral data $b$. Then there is a cartesian diagram 
\[\begin{tikzcd}
	{X_{b}} & {Cayley^n(T^*_X/X)} \\
	{X} & {Sym^n(T^*_X/X)}.
	\arrow[from=1-1, to=1-2]
	\arrow["p_b", from=1-1, to=2-1]
	\arrow["\square"{description}, draw=none, from=1-1, to=2-2]
	\arrow[from=1-2, to=2-2]
	\arrow["{b}", from=2-1, to=2-2]
\end{tikzcd}\]
Consider the natural map
\[
\oplus_{i=1}^nH^0(X,S^i\Omega_X)\to (\oplus_{i=1}^nH^0(C_1,S^i\Omega_{C_1}),\oplus_{i=1}^nH^0(C_2,S^i\Omega_{C_2}))
\]
and let $(b_1,b_2)$ be the image of $b$.
Let $C_{ib_i}$ be the spectral cover of $C_i$ with spectral data $b_i$ for $i=1,2$. Then $C_{ib_i}$ is a closed subscheme of $T_{C_i}^*$ and there is a cartesian diagram
\[\begin{tikzcd}
	{C_{ib_i}} & {Cayley^n(T^*_{C_i}/C_i)} \\
	{C_i} & {Sym^n(T^*_{C_i}/C_i)}.
	\arrow[from=1-1, to=1-2]
	\arrow["{p_i}", from=1-1, to=2-1]
	\arrow["\square"{description}, draw=none, from=1-1, to=2-2]
	\arrow["{p_i'}", from=1-2, to=2-2]
	\arrow["{b_i}", from=2-1, to=2-2]
\end{tikzcd}\]

\begin{lemma}\label{lem: X_b to C_1bC_2b}
    The closed immersion $X_b\to T_X^*\cong T_{C_1}^*\times_{\mathbb{K}}T_{C_2}^*$ factors through the closed immersion $ C_{1b_1}\times_{\mathbb{K}}C_{2b_2}\to T_{C_1}^*\times_{\mathbb{K}}T_{C_2}^*$. And the image of the closed immersion $X_b\to T_X^*\cong T_{C_1}^*\times_{\mathbb{K}}T_{C_2}^*$ is the union of some irreducible components of $C_{1b_1}\times_{\mathbb{K}} C_{2b_2}$.
\end{lemma}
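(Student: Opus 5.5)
The plan is to prove the first assertion locally, by comparing defining ideals inside $T_X^*|_{\mathcal{U}_i}\cong \mathrm{Spec}\,A_i[x_{1i},x_{2i}]$ with $A_i=A_{1i'}\otimes_{\mathbb{K}}A_{2i''}$. The second assertion will then follow from a dimension count.

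First I identify the data on each chart. Since $\Omega_X^1\cong \mathrm{pr}_1^*\Omega_{C_1}^1\oplus \mathrm{pr}_2^*\Omega_{C_2}^1$, we have
\[
S^k\Omega_X^1\cong\bigoplus_{k_1+k_2=k}\mathrm{pr}_1^*S^{k_1}\Omega_{C_1}^1\otimes \mathrm{pr}_2^*S^{k_2}\Omega_{C_2}^1 .
\]
On $\mathcal{U}_i$ the components of $b$ are the functions $\sigma_{k_1k_2}(b)\in A_i$, namely the coefficients of $t_1^{k_1}t_2^{k_2}$, with $x_{1i},x_{2i}$ the fibre coordinates dual to the chosen trivializations of $\mathscr{T}_{C_1}$ and $\mathscr{T}_{C_2}$.

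The pure component $\sigma_{k0}(b)$ is a global section of $\mathrm{pr}_1^*S^k\Omega^1_{C_1}$. By the Künneth formula and $H^0(C_2,\mathscr{O}_{C_2})=\mathbb{K}$, this section is pulled back from $H^0(C_1,S^k\Omega^1_{C_1})$; this pulled-back section is precisely the $k$-th component of $b_1$, and the analogous statement holds for $\sigma_{0k}(b)$ and $b_2$. I must check that the map in the statement is exactly this projection to pure components. This is the point requiring some care.

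Next I compare equations. The ideal of $X_b$ on $\mathcal{U}_i$ is generated by the $f_{i_1i_2}$, $i_1+i_2=n$, evaluated at $b$. Setting $t_2=0$ in $\prod_r(t_1x_1+t_2x_2-t_1x_{r1}-t_2x_{r2})$ gives
\[
f_{n0}=\sum_{k=0}^n(-1)^k\sigma_{k0}\,x_1^{n-k},
\]
which under $b$ is the characteristic polynomial defining $C_{1b_1}\subset T^*_{C_1}$, pulled back along $\mathrm{pr}_1$. Symmetrically, $f_{0n}$ pulls back the equation of $C_{2b_2}$. So the ideal of $C_{1b_1}\times C_{2b_2}$, which is generated by these two pullbacks, is contained in the ideal of $X_b$. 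Since this holds on every chart, the closed immersion $X_b\to T_{C_1}^*\times T_{C_2}^*$ factors through $C_{1b_1}\times_{\mathbb{K}}C_{2b_2}$.

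For the second assertion I argue by dimension. Each $C_{ib_i}$ is a Cartier divisor in the smooth surface $T^*_{C_i}$, cut out by a monic polynomial, and is finite over $C_i$, so it is of pure dimension $1$. Hence $C_{1b_1}\times C_{2b_2}$ is of pure dimension $2$. By Lemma \ref{lem: X_b eqdim}, every irreducible component of $X_b$ also has dimension $2$. An irreducible closed subset of dimension $2$ inside a scheme of pure dimension $2$ is contained in some component of the same dimension, and must therefore equal that component. Thus the underlying set of $X_b$ is a union of irreducible components of $C_{1b_1}\times C_{2b_2}$.

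The only delicate point is the identification of $b_1,b_2$ with the pure coefficients $\sigma_{k0}(b),\sigma_{0k}(b)$, including the fact that these are constant along the other factor; the remaining steps are formal.
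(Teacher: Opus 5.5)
Your proposal is correct and follows the paper's proof. The paper likewise works on the charts $\mathcal{U}_j$ and observes that the generators $g_{jn}$ and $g_{j0}$ of the ideal of $X_b$ (your $f_{n0}(b)$ and $f_{0n}(b)$) are the images of the characteristic polynomials $f_{1j_1}$ and $f_{2j_2}$. It then deduces that $A_{1j_1}[x_1]/(f_{1j_1})\otimes_{\mathbb{K}}A_{2j_2}[x_2]/(f_{2j_2})\to A_j[x_1,x_2]/(g_{js}\mid 0\leq s\leq n)$ is surjective, and concludes using the equidimensionality of $X_b$ and of the two spectral curves.

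Your K\"unneth remark only makes explicit what the paper uses without comment when it writes $\phi_j(\sigma_{s0})=\phi_{1j_1}(\sigma_s)\otimes 1$ and $\phi_j(\sigma_{0s})=1\otimes\phi_{2j_2}(\sigma_s)$.
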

\begin{proof}
For any $j=(j_1,j_2)\in \Lambda$ and $i=1,2$, $b_i|_{U_{ij_i}}$ induces a morphism $\phi_{ij_i}:A_{ij_i}[\sigma_1,\dots,\sigma_n]\to A_{ij_i}$, and $b|_{\mathcal{U}_j}$ induces a morphism $$\phi_{j}:A_{j}[\sigma_{s_1s_2}\mid s_1,s_2\geq 0,1\leq s_1+s_2\leq n]\to A_{j}=A_{1j_1}\otimes_{\mathbb{K}}A_{2j_2},$$
where $\phi_j(\sigma_{s0})=\phi_{1j_1}(\sigma_s)\otimes 1$ and $\phi_j(\sigma_{0s})=1\otimes \phi_{2j_2}(\sigma_s)$ for any $1\leq s\leq n$, and $\sigma_s,\sigma_{s_1s_2}$ are defined in \Cref{subsection: notations}.
Then
$$p_i^{-1}(U_{ij_i})=\mathrm{Spec}A_{ij_i}[x_i]/(f_{ij_i}),\ f_{ij_i}=x_i^n+\sum_{l=1}^n(-1)^l\phi_{ij_i}(\sigma_l)x_i^{n-l},$$
and $p_b^{-1}(\mathcal{U}_j)=\mathrm{Spec}A_{j}[x_1,x_2]/(g_{js}\mid 0\leq s\leq n)$, where $\sigma_{00}=1$ and
\[
g_{js}=\sum_{l=0}^n(-1)^l\sum_{\max\{s+l-n,0\}\leq k\leq \min\{s,l\}}\phi_j(\sigma_{k,l-k})\binom{n-l}{s-k}x_1^{s-k}x_2^{n-s-l+k}.
\]
Consider the natural morphism $A_{ij_i}[x_i]\to A_{j}[x_1,x_2]$ for $i=1,2$, the image of $f_{ij_i}$ is $g_{jq}$ where $q=n$ if $i=1$ and $q=0$ if $i=2$.
So the natural morphism $A_{1j_1}[x_1]/(f_{1j_1})\otimes_{\mathbb{K}}A_{2j_2}[x_2]/(f_{2j_2}) \to A_{j}[x_1,x_2]/(g_{js}\mid 0\leq s\leq n)$ is surjective.
Then the closed immersion $p_b^{-1}(\mathcal{U}_j)\to T_X^*|_{\mathcal{U}_j}$ factors through the closed immersion $p_b^{-1}(\mathcal{U}_j)\to p_1^{-1}(U_{1j_1})\times_{\mathbb{K}}p_2^{-1}(U_{2j_2})$.
Hence the closed immersion $X_b\to T_X^*$ factors through the closed immersion $X_b\to C_{1b_1}\times_{\mathbb{K}}C_{2b_2}$.
By Lemma \ref{lem: X_b eqdim}, the surface $X_b$ is equidimensional.
Note that the curves $C_{1b_1} $and $ C_{2b_2}$ are equidimensional. 
Thus, the image of the closed immersion $X_b\to T_X^*\cong T_{C_1}^*\times_{\mathbb{K}}T_{C_2}^*$ is the union of some irreducible components of $C_{1b_1}\times_{\mathbb{K}} C_{2b_2}$.
\end{proof}

Let $X_1,\dots,X_r$ be the irreducible components of $X_{b}$. 
Let $\pi_i:Y_i\to X_i$ be the normalization morphism for any $1\leq i\leq r$ and $\pi_{b}:Y_{b}=\bigsqcup_{1\leq i\leq r} Y_i\to X_{b}\to X$ be the natural morphism.

\begin{lemma}\label{lem: Y_i=C times C' sm}
For $1\leq i\leq r$, $Y_i\cong C'_i\times_{\mathbb{K}}C''_i$ for some smooth projective curves $C'_i$ and $C''_i$.
\end{lemma}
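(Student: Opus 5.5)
By Lemma \ref{lem: X_b to C_1bC_2b}, each irreducible component $X_i$ of $X_b$ is an irreducible component of $C_{1b_1}\times_{\mathbb{K}}C_{2b_2}$. The plan is to identify these components explicitly and then compute their normalizations.

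First I describe the components of the product. Let $C_{1b_1}^{(1)},\dots,C_{1b_1}^{(a)}$ be the irreducible components of the curve $C_{1b_1}$, and $C_{2b_2}^{(1)},\dots,C_{2b_2}^{(c)}$ those of $C_{2b_2}$. These curves are projective, being finite over the $C_i$ (Lemma \ref{lem: X_b of ChenNgo}). Since $\mathbb{K}$ is algebraically closed, a product of irreducible $\mathbb{K}$-varieties is irreducible. So the sets $C_{1b_1}^{(k)}\times_{\mathbb{K}}C_{2b_2}^{(l)}$ are irreducible closed subsets of dimension $2$. They cover the product, and none contains another, so they are exactly its irreducible components. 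Hence, as a reduced scheme, each $X_i$ equals $(C_{1b_1}^{(k)})_{\mathrm{red}}\times_{\mathbb{K}}(C_{2b_2}^{(l)})_{\mathrm{red}}$ for some $k,l$. The normalization $Y_i\to X_i$ depends only on $(X_i)_{\mathrm{red}}$, which is an integral surface.

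Next I compute the normalization of this product. Let $C'_i\to (C_{1b_1}^{(k)})_{\mathrm{red}}$ and $C''_i\to (C_{2b_2}^{(l)})_{\mathrm{red}}$ be the normalizations. They are smooth projective irreducible curves, because a normal curve over $\mathbb{K}$ is regular, and regular implies smooth over a perfect field. The map $C'_i\times_{\mathbb{K}}C''_i\to X_i$ is finite, since it is a product of finite morphisms. It is birational, since it is an isomorphism over the product of the dense open sets where each factor map is an isomorphism. The source is smooth over $\mathbb{K}$, being a product of smooth varieties, and it is integral. Therefore it is normal, and by the universal property of normalization $Y_i\cong C'_i\times_{\mathbb{K}}C''_i$.

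The only delicate point is the first step: checking that the reduced components of $X_b$ really are products of components of the two curves. One must also watch for non-reducedness, since $C_{ib_i}$ may be non-reduced even though $b(\eta)$ lies in the multiplicity-free stratum. I would handle this by passing to reduced structures throughout, and by using that irreducible components of $X\times_{\mathbb{K}} Y$ are products of irreducible components over an algebraically closed field. That fact follows from the topological statement $|X\times Y|$ being covered by the irreducible closed sets $|X^{(k)}\times Y^{(l)}|$.
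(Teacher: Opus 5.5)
Your proposal is correct and takes the same route as the paper. You use Lemma \ref{lem: X_b to C_1bC_2b} to identify each $X_i$ with a product of irreducible components of $C_{1b_1}$ and $C_{2b_2}$, then take $Y_i$ to be the product of the normalizations of those two curves, while making explicit what the paper leaves implicit: passing to reduced structures, why the components of the product are exactly the products of components, and why the product of normalizations is the normalization (it is finite, birational and smooth).
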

\begin{proof}
For $i=1,2$, let $Z_{i1},\dots,Z_{ir_i}$ be the irreducible components of $C_{ib_i}$. 
Let $Z_{ij}'\to Z_{ij}$ be the normalization morphism for any $i=1,2$ and $1\leq j\leq r_i$.
Since the curve $Z_{ij}$ is proper over $\mathbb{K}$, the curve $Z_{ij}'$ is smooth projective. By Lemma \ref{lem: X_b to C_1bC_2b}, $X_i\cong Z_{1j_i}\times_{\mathbb{K}}Z_{2s_i}$ for any $1\leq i\leq r$ and some $1\leq j_i\leq r_1$ and $1\leq s_i\leq r_2$. Then $Y_i\cong Z_{1j_i}'\times_{\mathbb{K}}Z_{2s_i}'$.
\end{proof}

\begin{lemma}\label{lem: (E,theta)}
There is a cycle $D\in Z^1(Y_b)$ and $\mathscr{E}_b={\pi_b}_*\mathscr{O}_{Y_b}(D)$ such that  $(\mathscr{E}_b,\theta_{b})\in h_{X,GL_n}^{-1}(b)$ and  $\det(\mathscr{E}_b)\cong \mathscr{O}_X$, where Higgs field $\theta_{b}:\mathscr{T}_X\to {p_b}_*\mathscr{O}_{X_{b}}\to \mathscr{E}nd_{\mathscr{O}_X}(\mathscr{E}_b)$.
\end{lemma}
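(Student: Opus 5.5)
Here $b$ is the fixed spectral datum with $b(\eta)\in Sym^n_{(1^n)}$; the statement as written is for $GL_n$-data, and the trace condition plays no role in choosing $D$. The plan is to combine Lemma \ref{fomular} with the product structure of Lemma \ref{lem: Y_i=C times C' sm}.

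First, the Higgs bundle part is handled exactly as in Lemma \ref{lem: det(pi_b*_Y(sD))}. For any cycle $D\in Z^1(Y_b)$, each $\mathscr{O}_{Y_b}(D)|_{Y_i}$ is reflexive on the normal surface $Y_i$, hence maximal Cohen--Macaulay of generic rank $1$ by Lemma \ref{lemma: reflexive to (S_2)}. Its pushforward to $X_b$ is again maximal Cohen--Macaulay by Lemma \ref{lemma: A to B, E CM}. So $({\pi_b}_*\mathscr{O}_{Y_b}(D),\theta_b)\in h^{-1}_{X,GL_n}(b)$ by Lemma \ref{lem: X_b of ChenNgo}(4), and Lemma \ref{fomular} applies: $\pi_b$ is finite flat since each $Y_i\cong C_i'\times C_i''$ is smooth of dimension $2$. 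It therefore suffices to find $D$ with
\[
\mathscr{O}_X(\pi_{b*}D)\cong \det({\pi_b}_*\mathscr{O}_{Y_b})^{-1}.
\]
In other words, we must show that the class of $L:=\det({\pi_b}_*\mathscr{O}_{Y_b})^{-1}$ lies in the image of $\pi_{b*}:\mathrm{Pic}(Y_b)\to \mathrm{Pic}(X)$ (the norm map, since $Y_b$ is smooth).

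The key step is surjectivity of the norm map, computed componentwise. By Lemma \ref{lem: Y_i=C times C' sm} and its proof, $\pi_i:Y_i=Z'_{1j_i}\times Z'_{2s_i}\to C_1\times C_2$ is the product $f_i\times g_i$ of two finite maps of smooth projective curves $f_i:Z'_{1j_i}\to C_1$ and $g_i:Z'_{2s_i}\to C_2$. I would use only one component, say $Y_1$, and take $D$ supported on it. Write $\pi_1=f\times g$ with $\deg f=d_1$ and $\deg g=d_2$. There are three kinds of divisor on $X$ to realize.
\begin{enumerate}
\item For a curve $\Gamma\subset Y_1$ mapping birationally onto a curve in $X$, the pushforward $\pi_{1*}\Gamma$ is the image curve. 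A general graph-type curve pushes forward birationally, and such curves generate enough of $\mathrm{Pic}(X)$.
\item The cleaner route is the degree-$1$ case: if $f$ and $g$ both had degree $1$ the statement would be trivial. In general, $\pi_{1*}(f^*A\times Z'_{2s_1})=d_2\,d_1 (A\times C_2)$, which picks up multiplicities, so this alone is not enough.
\item Instead I would use that for a finite map of curves $f:C'\to C$ the norm $\mathrm{Pic}(C')\to\mathrm{Pic}(C)$ is surjective, since every point of $C$ is the image of a point. Then $\pi_{1*}(\{y\}\times Z'_{2s_1})=d_2\,(\{f(y)\}\times C_2)$ still carries the factor $d_2$. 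To avoid it, push forward curves in $Y_1$ that map birationally: for points $y\in Z'_{1j_1}$, the curve $\{y\}\times Z'_{2s_1}$ maps $d_2{:}1$, but a graph of a general correspondence can map with degree $1$.
\end{enumerate}

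The main obstacle is therefore showing that every class in $\mathrm{Pic}(C_1\times C_2)$, including the non-product classes coming from $\mathrm{Hom}(J_{C_1},J_{C_2})$, is a pushforward of a divisor on some $Y_i$. My first attempt: every effective very ample divisor $H$ on $X$ has an irreducible preimage component $\Gamma\subset Y_1$ mapping birationally onto $H$. This holds because $\pi_1^{-1}(H)\to H$ is a finite cover of a general irreducible curve, and I would use Bertini together with monodromy to find a birational component. If that fails, I would fall back on a parity/degree argument as in Section 3, twisting by line bundles from $X$ and using both components $D$ and $R_b$.
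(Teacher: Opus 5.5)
Your reduction is correct: the Higgs-bundle part works for any $D$, and by Lemma~\ref{fomular} it suffices to find $D$ with $\mathscr{O}_X(\pi_{b*}D)\cong\det(\pi_{b*}\mathscr{O}_{Y_b})^{-1}$. The gap is the key step. The norm map $\pi_{b*}:\mathrm{Pic}(Y_b)\to\mathrm{Pic}(X)$ is \emph{not} surjective in general, already on a single component.

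Write $\pi_1=f\times g:C'\times C''\to C_1\times C_2$ with $\deg f=d_1$, and let $F=\{x\}\times C_2$. Then $\pi_1^*F=f^*[x]\times C''$ is numerically $d_1[\{y\}\times C'']$. By the projection formula, $\pi_{1*}D\cdot F=D\cdot\pi_1^*F\in d_1\mathbb{Z}$ for every divisor $D$ on $Y_1$.

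So when $d_1>1$, no divisor on $Y_1$ pushes forward to a class $H$ with $H\cdot F=1$, for example $C_1\times\{z\}$ or the ample class $C_1\times\{z\}+\{x\}\times C_2$. In particular, no curve in $Y_1$ maps birationally onto such an $H$. This is what defeats your Bertini/monodromy step: for general $H$ the preimage is irreducible and of degree $d_1d_2$ over $H$.

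The situation genuinely occurs. If $Z_j\to C_j$ is the double cover $\lambda^2=q_j$ for a non-square $q_j\in H^0(C_j,S^2\Omega_{C_j})$, then $X_b=Z_1\times Z_2$ is irreducible with $n=4$ and $d_1=d_2=2$. So you must show that the \emph{specific} class $\det(\pi_{b*}\mathscr{O}_{Y_b})^{-1}$ is a pushforward, not that all classes are.

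Your fallback to the parity argument of Section~3 does not achieve this. There one only realizes $\det(\pi_{b*}\mathscr{O}_{Y_b})^{1-2s}$, and twisting by $\pi_b^*M$ changes the determinant by $M^{n}$. That cancels an odd power of $\det(\pi_{b*}\mathscr{O}_{Y_b})$ only when $n$ is odd.

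The missing idea, which the paper uses, is a square root of the relative dualizing sheaf; this is where the product structure really enters.

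\begin{itemize}
\item Since $\pi_b$ is a flat l.c.i.\ morphism of smooth surfaces, $\pi_b^!\mathscr{O}_X\cong\omega_{Y_b}\otimes\pi_b^*\omega_X^{-1}$.
\item Because $Y_i\cong C_i'\times C_i''$ and $X=C_1\times C_2$, theta characteristics on the curves make $\omega_{Y_b}$ and $\omega_X$ squares. Hence $\pi_b^!\mathscr{O}_X\cong\mathscr{O}_{Y_b}(2D')$ for some $D'$.
\item Duality $\pi_{b*}\pi_b^!\mathscr{O}_X\cong(\pi_{b*}\mathscr{O}_{Y_b})^{\vee}$, together with Lemma~\ref{fomular}, gives $\det(\pi_{b*}\mathscr{O}_{Y_b})^{2}\cong\mathscr{O}_X(-2\pi_{b*}D')$.
\item So $\det(\pi_{b*}\mathscr{O}_{Y_b})\otimes\mathscr{O}_X(\pi_{b*}D')$ is $2$-torsion. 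It lies in $\mathrm{Pic}^0(X)$ because $NS(C_1\times C_2)$ is torsion-free.
\item Since $\mathrm{Pic}^0(X)$ is divisible, this class equals $\mathscr{L}^{-n}$ for some $\mathscr{L}$.
\item Taking $\mathscr{O}_{Y_b}(D)\cong\mathscr{O}_{Y_b}(D')\otimes\pi_b^*\mathscr{L}$ then gives $\det(\pi_{b*}\mathscr{O}_{Y_b}(D))\cong\mathscr{O}_X$.
\end{itemize}
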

\begin{proof}
By Lemma \ref{lem: Y_i=C times C' sm}, $Y_i\cong C'_i\times_{\mathbb{K}}C''_i$ for some projective smooth curves $C'_i$ and $C''_i$ over $\mathbb{K}$.
Note that $X=C_1\times_{\mathbb{K}}C_2$ is projective smooth over $\mathbb{K}$.
Then $\pi_b$ is a projective local complete intersection, and the canonical sheaf $\omega_{Y_i}\cong L_i^2$ and $\omega_{X}\cong L^2$ for some $L_i\in \mathrm{Pic}(Y_i)$ and $L\in \mathrm{Pic}(X)$.
By Lemma \ref{lemma: pi flat}, $\pi_b$ is flat, then $\pi_b$ is a flat projective local complete intersection
By Lemma \ref{lemma: dualizing sheaf}, the dualizing sheaf $\omega_{\pi_b}=\pi_b^!\mathscr{O}_X$ is isomorphic to the canonical sheaf $\omega_{X_b/Y}\cong \omega_{X_b}\otimes_{\mathscr{O}_{X_b}}\pi_b^*\omega_X^{-1}$.
Then there is a cycle $D'\in Z^1(Y_b)$ such that $\mathscr{O}_{Y_b}(2D')\cong \pi_b^!\mathscr{O}_X$.
By Lemma \ref{lemma: dualizing sheaf}, ${\pi_b}_*\mathscr{O}_{Y_b}(2D')\cong {\pi_b}_*\pi_b^!\mathscr{O}_X\cong ({\pi_b}_*\mathscr{O}_{Y_b})^{\vee}$, then $\det({\pi_b}_*\mathscr{O}_{Y_b}(2D'))\cong \det({\pi_b}_*\mathscr{O}_{Y_b})^{\vee}$.
By Lemma \ref{fomular}, we have $\det({\pi_b}_*\mathscr{O}_{Y_b}(2D'))\cong \det({\pi_b}_*\mathscr{O}_{Y_b})\otimes \mathscr{O}_X(2{\pi_b}_*D')$, then $(\det({\pi_b}_*\mathscr{O}_{Y_b}))^2\cong \mathscr{O}_X(-2{\pi_b}_*D')$.
Note that $NS(X)$ is free. Then $\det({\pi_b}_*\mathscr{O}_{Y_b})\cong \mathscr{O}_X(-{\pi_b}_*D')\otimes\mathscr{L}^{-n}$ for some $\mathscr{L}\in \mathrm{Pic}(X)$. 
Take a cycle $D\in Z^1(Y_b)$ such that $\mathscr{O}_{Y_b}(D)\cong \mathscr{O}_{Y_b}(D')\otimes {\pi_b}^*\mathscr{L}$. 
Let $\mathscr{E}_b={\pi_b}_*\mathscr{O}_{Y_b}(D)$.
Then $\det(\mathscr{E}_b)\cong \mathscr{O}_X$.

Since $Y_b$ is regular, we have $\mathscr{O}_{Y_b}(D)\in \mathrm{Pic}(Y_b)$, then $\mathscr{O}_{Y_b}(D)$ is a maximal Cohen--Macaulay $\mathscr{O}_{Y_b}$-module of rank $1$.
Since the natural morphism $f:Y_b=\bigsqcup_{1\leq i\leq r} Y_i\to \bigsqcup_{1\leq i\leq r} X_i\to X_b$ is finite surjective, we see that $f_*\mathscr{O}_{Y_b}(D)$ is a maximal Cohen--Macaulay $\mathscr{O}_{X_b}$-module of generic rank $1$ by Lemma \ref{lemma: A to B, E CM}.
Since $\pi_b=p_b\circ f$, we have $\mathscr{E}_b={\pi_{b}}_*\mathscr{O}_{Y_b}(D)\cong {p_{b}}_*(f_*\mathscr{O}_{Y_b}(D))$, then $(\mathscr{E}_b,\theta_{b})\in h_{X,GL_m}^{-1}(b)$ by Lemma \ref{lem: X_b of ChenNgo}.
\end{proof}

\begin{theorem}\label{thm: sd_SL(n) surj for X=C1 times C2}
     The morphism $sd_{X,SL_n}$ is surjective.
\end{theorem}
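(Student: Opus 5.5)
The plan follows the proof of Theorem \ref{thm: sd_SL(2n+1) surj}, with Lemma \ref{lem: (E,theta)} replacing the odd-rank twisting trick. Fix $b\in \mathscr{B}_{X,SL_n}(\mathbb{K})$ and let $\mu=(\mu_1^{\alpha_1},\dots,\mu_s^{\alpha_s})$ be the partition with $b(\eta)\in Sym^n_{\mu}(T_X^*/X)$. By Lemma \ref{lem: decomposition in SS}, $b$ factors through $\tau_\mu\circ(b_1,\dots,b_s)$, where $b_i\in\mathscr{B}_{X,GL_{\alpha_i}}(\mathbb{K})$ and $b_i(\eta)\in Sym^{\alpha_i}_{(1^{\alpha_i})}(T_X^*/X)$. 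No trace condition is needed on the individual $b_i$. Each $b_i$ satisfies the generic separability hypothesis of Subsection 4.1, so Lemmas \ref{lem: X_b to C_1bC_2b}, \ref{lem: Y_i=C times C' sm} and \ref{lem: (E,theta)} apply to $b_i$ with $n$ replaced by $\alpha_i$.

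For each $i$, Lemma \ref{lem: (E,theta)} then gives a cycle $D_i\in Z^1(Y_{b_i})$ with $(\mathscr{E}_{b_i},\theta_{b_i})\in h^{-1}_{X,GL_{\alpha_i}}(b_i)$ and $\det(\mathscr{E}_{b_i})\cong\mathscr{O}_X$. Set
\[
(\mathscr{E},\theta)=\bigoplus_{i=1}^s(\mathscr{E}_{b_i},\theta_{b_i})^{\oplus\mu_i}.
\]
By Lemma \ref{lem: decomposition in SS}(3), applied repeatedly, the spectral datum of $(\mathscr{E},\theta)$ is the composite $\tau_\mu\circ(b_1,\dots,b_s)=b$, so $(\mathscr{E},\theta)\in h^{-1}_{X,GL_n}(b)$. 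Its determinant is $\bigotimes_i\det(\mathscr{E}_{b_i})^{\mu_i}\cong\mathscr{O}_X$. Finally, $\operatorname{tr}(\theta)=0$, because the trace is the $\sigma_{10},\sigma_{01}$ component of the Hitchin image of $(\mathscr{E},\theta)$, which equals that of $b$, and this vanishes since $b\in Sect(X,Sym^{n,tr=0}(T_X^*/X))$. Hence $(\mathscr{E},\theta)$ is an $SL_n$-Higgs bundle with $sd_{X,SL_n}(\mathscr{E},\theta)=b$.

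The main obstacle is checking that Lemma \ref{lem: (E,theta)} really applies to each $b_i$ of rank $\alpha_i$. Its proof divides a class by $\alpha_i$ in $\mathrm{Pic}(X)$, and this must work even when $\alpha_i$ is even. Concretely, one needs $\det({\pi_{b_i}}_*\mathscr{O})\otimes\mathscr{O}_X({\pi_{b_i}}_*D')$ to be $2$-torsion. One then adjusts $D'$ by a $2$-torsion line bundle and by pullbacks $\pi_{b_i}^*\mathscr{L}$, which change the determinant by $\mathscr{L}^{\alpha_i}$. This is possible because $\mathrm{Pic}^0(X)\cong\mathrm{Pic}^0(C_1)\times\mathrm{Pic}^0(C_2)$ is divisible and $NS(X)$ is torsion free. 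If that step needs adjustment, I would fall back on the pairing trick from Theorem \ref{thm: sd_SL(2n+1) surj}. When $\mu_i$ is even, take summands $\mathscr{E}$ and $\mathscr{E}'$ whose determinants are mutually inverse, using ${\pi_{b_i}}_*\mathscr{O}(\pm R_{b_i})$. When $\mu_i$ is odd, use the $\alpha_i$-th root of the determinant, which exists by the divisibility just noted.
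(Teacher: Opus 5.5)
Your proposal is correct and is essentially the paper's proof. You decompose $b$ by Lemma \ref{lem: decomposition in SS}, apply Lemma \ref{lem: (E,theta)} to each $b_i$ (with rank $\alpha_i$) to get summands with trivial determinant, and take $\bigoplus_i\mathscr{E}_{b_i}^{\oplus\mu_i}$. Your explicit checks match what the paper uses implicitly: the relevant class is $2$-torsion and hence lies in the divisible group $\mathrm{Pic}^0(X)$ because $NS(X)$ is torsion-free, and $\operatorname{tr}(\theta)=0$ is forced by $b$. The fallback is unnecessary, and its last step would not work in general, since a determinant with nonzero N\'eron--Severi class need not have an $\alpha_i$-th root.
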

\begin{proof}
For any $b\in \mathscr{B}_{X,SL_n}(\mathbb{K})$, then there is a partition $\mu=(\mu_1^{\alpha_1},\dots,\mu_s^{\alpha_s})$ of $n$ such that 
$0<\mu_1<\cdots<\mu_s,0<\alpha_i,\sum_{i=1}^s\mu_i\alpha_i=n$ and $b(\eta)\in Sym^n_{\mu}(T_X^*/X)$. 
By Lemma \ref{lem: decomposition in SS}, there exist spectral data $$b_i: X\to Sym^{\alpha_i}(T_X^*/X),1\leq i\leq s,$$ such that $b_i(\eta)\in Sym_{(1^{\alpha_i})}^{\alpha_i}(T_X^*/X)$ for $1\leq i\leq s$ and $b$ is
the following composition of morphisms
\[
X\xrightarrow{(b_1,\dots,b_s)} Sym^{\alpha_1}(T_X^*/X)\times_X\cdots\times_X Sym^{\alpha_s}(T_X^*/X)\xrightarrow{\tau_{\mu}} Sym^n(T_X^*/X).
\]
By Lemma \ref{lem: (E,theta)}, there is $(\mathscr{E}_{b_i},\theta_{b_i})\in h_{X,GL_{\alpha_i}}^{-1}(b_i)$ and $\det(\mathscr{E}_{b_i})\cong \mathscr{O}_X$ for any $1\leq i\leq s$.
Let $\mathscr{E}=\oplus_{1\leq i\leq s}\mathscr{E}_{b_i}^{\oplus\mu_i}$ and $\theta=\oplus_{1\leq i\leq s}\theta_{b_i}^{\oplus\mu_i}$.
By Lemma \ref{lem: decomposition in SS}, $(\mathscr{E},\theta)\in h_{X,GL_n}^{-1}(b)$, then $(\mathscr{E},\theta)\in h_{X,SL_n}^{-1}(b)$ since $\det(\mathscr{E})\cong \mathscr{O}_X$.
\end{proof}

\subsection{The case: $Sp_{2n}$}

In this section, we fix the usual faithful representation $Sp_{2n}\to GL_{2n}$. A $Sp_{2n}$-Higgs bundle on $X$ is $(\mathscr{E},\theta,Q)$, where $(\mathscr{E},\theta)$ is Higgs bundle of rank $2n$ on $X$, and $Q:\mathscr{E}\otimes \mathscr{E}\to \mathscr{O}_X$ is a perfect alternating bilinear form such that $\theta$ is anti-self-adjoint with respect to $Q$. 

Note that $\mathfrak{g}/\!\!/G\cong \mathfrak{t}/\!\!/W\cong \mathrm{Spec}\mathbb{K}[E_1,\dots,E_{2n}]$ for $G=Sp_{2n}$ where $E_i$ is the  $i$-th symmetric function of $2n$ variables evaluated at the point $(x_1,\dots,x_n,-x_1,\dots,-x_n)$.
Since $\mathbb{K}[E_1,\dots,E_{2n}]\cong \mathbb{K}[\sigma_{2i}\mid 1\leq i\leq n]$, $B$ is the closed subscheme of $Sym^{2n}(\mathbb{A}^2)$.
Let $Sym^{2n,Sp}(\mathbb{A}^2)=\mathrm{Spec}\mathbb{K}[\sigma_{i_1i_2}\mid i_1+i_2=2i,1\leq i\leq n]$ be the closed subscheme of $Sym^{2n}(\mathbb{A}^2)$, where $\sigma_{i_1i_2}$ is defined in \Cref{subsection: notations}.
Let $Sym^{2n,Sp}(T^*_X/X)= Fr(T^*_X)\times_{GL_2}Sym^{2n,Sp}(\mathbb{A}^2)$ be the closed subscheme of $Sym^{2n}(T^*_X/X)$.
Then
\[\mathscr{B}_{X,Sp_{2n}}=Sect(X, Sym^{2n,Sp}(T_X^*/X))\]
is a closed subscheme of $\mathscr{B}_{X,GL_{2n}}=Sect(X, Sym^{2n}(T_X^*/X))$. 
The Hitchin morphism $h_{X,Sp_{2n}}:\mathscr{M}_{X,Sp_{2n}}\to \mathscr{A}_{X,Sp_{2n}}$ factors through $sd_{X,Sp_{2n}}:\mathscr{M}_{X,Sp_{2n}}\to \mathscr{B}_{X,Sp_{2n}}$.
In this section, we will prove that $sd_{X,Sp_{2n}}$ is surjective.

Let $b\in  \mathscr{B}_{X,GL_{2n}}(\mathbb{K})$ such that $b(\eta)\in \{[\alpha_1,\dots,\alpha_n,-\alpha_1,\dots,-\alpha_n]\in Sym^{2n}(\mathbb{A}_{\bar{k(\eta)}}^2)|\alpha_i\neq \alpha_j,i\neq j\}\subseteq Sym^{2n}_{(1^{2n})}(T_X^*/X)$, where $\eta$ is the generic point of $X$. Let $X_{b}$ be the spectral cover of $X$ with spectral data $b$. Then there is a cartesian diagram
\[\begin{tikzcd}
	{X_{b}} & {Cayley^{2n}(T^*_X/X)} \\
	{X} & {Sym^{2n}(T^*_X/X)}.
	\arrow[from=1-1, to=1-2]
	\arrow["p_b", from=1-1, to=2-1]
	\arrow["\square"{description}, draw=none, from=1-1, to=2-2]
	\arrow[from=1-2, to=2-2]
	\arrow["{b}", from=2-1, to=2-2]
\end{tikzcd}\]
Consider the natural map
\[
\oplus_{i=1}^nH^0(X,S^i\Omega_X)\to (\oplus_{i=1}^nH^0(C_1,S^i\Omega_{C_1}),\oplus_{i=1}^nH^0(C_2,S^i\Omega_{C_2}))
\]
and let $(b_1,b_2)$ be the image of $b$.
Let $C_{ib_i}$ be the spectral cover of $C_i$ with spectral data $b_i$ for $i=1,2$. Then $C_{ib_i}$ is a closed subscheme of $T_{C_i}^*$ and there is a cartesian diagram
\[\begin{tikzcd}
	{C_{ib_i}} & {Cayley^n(T^*_{C_i}/C_i)} \\
	{C_i} & {Sym^n(T^*_{C_i}/C_i)}.
	\arrow[from=1-1, to=1-2]
	\arrow["{p_i}", from=1-1, to=2-1]
	\arrow["\square"{description}, draw=none, from=1-1, to=2-2]
	\arrow["{p_i'}", from=1-2, to=2-2]
	\arrow["{b_i}", from=2-1, to=2-2]
\end{tikzcd}\]

For any $i\in \Lambda$, let $p_b^{-1}(\mathcal{U}_i)=\mathrm{Spec}\widetilde{A_i[x_{1i},x_{2i}]/I_i}$ and let
\[\tau_i':A_i[x_{1i},x_{2i}]\to A_i[x_{1i},x_{2i}],x_{1i}\mapsto -x_{1i},x_{2i}\mapsto -x_{2i}\]
be the $A_i$-algebra isomorphism.
Note that $\tau_i'(I_i)=I_i$. This induces the isomorphism $\tau_i':A_i[x_{1i},x_{2i}]/I_i\to A_i[x_{1i},x_{2i}]/I_i$.
Let $\tau_i:p_b^{-1}(\mathcal{U}_i)\to p_b^{-1}(\mathcal{U}_i)$ be the induced morphism. 
Note that $\tau_i$ is $GL_2$-equivariant for any $i\in \Lambda$.
By the glueing of $\tau_i$, we obtain the isomorphism $\tau:X_b\to X_b$.

Let $X_1,\dots,X_r$ be the irreducible components of $X_{b}$. 
Then the action of $\tau$ permutes the set $\{X_1,\dots,X_r\}$.
Let 
\[
\mathcal{A}_0=\{i\mid 1\leq i\leq r,\tau(X_i)=X_i,\exists x\in X_i,\dim\mathscr{O}_{X_i,x}=1,\tau(x)=x\},
\]
\[
\mathcal{A}_1=\{(i,j)\mid 1\leq i<j\leq r,\tau(X_i)= X_j\},
\]
\[
\mathcal{A}_2=\{i\mid 1\leq i\leq r,\tau(X_i)=X_i,\nexists x\in X_i,\dim\mathscr{O}_{X_i,x}=1,\tau(x)=x\}.
\]
Let $X_i^{nor}\to X_i$ be the normalization morphism for any $1\leq i\leq r$.
For $i\in \mathcal{A}_0$, let $\pi_i:Y_i=X_i\to X_b$ be the closed immersion.
For $i=(i_1,i_2)\in \mathcal{A}_1$, let $\pi_i:Y_i=X_{i_1}^{nor}\bigsqcup X_{i_2}^{nor}\to X_{b}\to X$ be the natural morphism.
For $i\in \mathcal{A}_2$, let $\pi_i:Y_i=X_i^{nor}\to X_{b}\to X$ be the natural morphism.
Note that $\tau$ can extend to an involution $\hat{\tau}_i$ on $Y_i$.
Since $\pi_i\circ \hat{\tau}_i=\pi_i$ for $i\in \mathcal{A}_1\cup\mathcal{A}_2$, we have $\pi_i=\bar{\pi}_i\circ \hat{\pi}_i$ where $\hat{\pi}_i:Y_i\to \bar{Y}_i=Y_i/<\hat{\tau}_i>$ and $\bar{\pi}_i:\bar{Y}_i\to X$.
Since $X_i^{nor}$ is normal and $<\hat{\tau}_i>$ is a finite group, we see that $\bar{Y}_i$ is irreducible normal.

\begin{lemma}\label{lemma: A_0 case}
For $i\in \mathcal{A}_0$, there is $(\mathscr{E}_i={p_b}_*{\pi_i}_*\mathscr{F}_i,\theta_i,Q_i)$, where  $\mathscr{F}_i$ is a maximal Cohen--Macaulay $\mathscr{O}_{Y_i}$-module of generic rank $1$, $\theta_{i}:\mathscr{T}_X\to {p_b}_*\mathscr{O}_{X_{b}}\to {\pi_i}_*\mathscr{O}_{Y_{i}}\to \mathscr{E}nd_{\mathscr{O}_X}(\mathscr{E}_i)$, and $Q_i:\mathscr{E}_i\otimes \mathscr{E}_i\to \mathscr{O}_X$ is a perfect alternating bilinear form such that $\theta_i$ is anti-self-adjoint with respect to $Q_i$. 
\end{lemma}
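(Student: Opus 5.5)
The plan is to build the symplectic form from Grothendieck duality for the finite flat map $\pi:=p_b\circ\pi_i:Y_i\to X$, twisted by the involution $\tau$. Here $Y_i=X_i$, which is $\tau$-stable and has a $\tau$-fixed codimension-one point.

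\textbf{Step 1: the sheaf.} By Lemma~\ref{lem: X_b to C_1bC_2b}, $X_i\cong Z_1\times Z_2$ is a product of two irreducible curves. Each factor is an irreducible component of the spectral curve $C_{jb_j}\subset T^*_{C_j}$, hence a plane curve and Gorenstein, so $X_i$ is Gorenstein. Moreover $p_b|_{X_i}$ is finite and equidimensional over the smooth surface $X$ (Lemma~\ref{lem: X_b eqdim}). As in Lemma~\ref{fomular}, any maximal Cohen--Macaulay $\mathscr{O}_{Y_i}$-module $\mathscr{F}$ pushes forward to a vector bundle on $X$ (Lemmas~\ref{lemma: A to B, E CM} and \ref{lemma: CM iff free}). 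I would take $\mathscr{F}_i$ to be a rank-one reflexive, equivalently MCM, sheaf with
\[
\mathscr{H}om_{\mathscr{O}_{Y_i}}(\mathscr{F}_i,\pi^!\mathscr{O}_X)\cong \tau^*\mathscr{F}_i .
\]
By Lemma~\ref{lemma: dualizing sheaf}(2), $\pi_*\mathscr{H}om(\mathscr{F}_i,\pi^!\mathscr{O}_X)\cong(\pi_*\mathscr{F}_i)^\vee$. Since $\pi\circ\tau=\pi$, we also have $\pi_*\tau^*\mathscr{F}_i\cong\pi_*\mathscr{F}_i$. This gives a perfect pairing $Q_i:\mathscr{E}_i\otimes\mathscr{E}_i\to\mathscr{O}_X$, namely $Q_i(s,t)=\mathrm{tr}(\langle s,\tau^*t\rangle)$.

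\textbf{Step 2: anti-self-adjointness.} The Higgs field acts through multiplication by the tautological coordinates $x_1,x_2$ on $\mathscr{O}_{X_b}$. The trace pairing is $\mathscr{O}_{Y_i}$-bilinear, and $\tau^*x_j=-x_j$. Hence
\[
Q_i(x_js,t)=\mathrm{tr}(x_j\langle s,\tau^*t\rangle)=-Q_i(s,x_jt),
\]
so $\theta_i$ is anti-self-adjoint.

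\textbf{Step 3: existence of $\mathscr{F}_i$ with the alternating property.} This is the main obstacle. We need two things: $\pi^!\mathscr{O}_X$ must be expressible as $\mathscr{L}\otimes\tau^*\mathscr{L}$-type data, i.e.\ a norm, and the sign of the resulting form must be alternating rather than symmetric.

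For existence, write $\omega:=\pi^!\mathscr{O}_X=\omega_{Y_i}\otimes\pi^*\omega_X^{-1}$, which is invertible since $Y_i$ is Gorenstein. I would look for a divisorial sheaf $\mathscr{F}=\mathscr{O}(D)$ with $D+\tau^*D\sim \omega$. I would try to exploit the product structure: $\tau$ acts on $Z_1\times Z_2$ as $\tau_1\times\tau_2$, with $\tau_j$ the involution $x\mapsto -x$ on the curve factors. The canonical classes of these plane curves are restrictions of explicit classes, roughly $p_j^*(K_{C_j}^{\otimes n'})$ times a tautological correction, and these are visibly of the form $M+\tau^*M$. The fixed codimension-one locus, i.e.\ the defining property of $\mathcal{A}_0$, is where the ramification lives. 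There one should be able to split $\omega$ along the fixed curve by taking half of the ramification divisor.

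For the sign, locally the pairing $\langle s,\tau^*t\rangle$ composed with $\tau$ is symmetric up to the action of $\tau$ on the chosen isomorphism $\omega\cong\mathscr{F}\otimes\tau^*\mathscr{F}$. Rescaling that isomorphism by $-1$ converts a symmetric form into an alternating one, provided $\tau$ acts on the fibre of $\omega$ at the fixed locus by $-1$. The fixed curve guarantees that this action is nontrivial, and this is where I would check the sign carefully. This is presumably why case $\mathcal{A}_0$ is separated from $\mathcal{A}_2$.

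Finally, the spectral datum is automatically $b$ restricted to this component, by Lemma~\ref{lem: X_b of ChenNgo}(4) applied to $f_*\mathscr{F}_i$.
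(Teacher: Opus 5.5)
\textbf{Verdict: there is a genuine gap in Step 3.} Your Steps 1 and 2 are sound. Step 3, which you rightly call the whole content of the lemma, is not carried out, and the one concrete mechanism you give for the sign is wrong.

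\emph{The sign cannot be chosen.} Suppose $\phi\colon\tau^*\mathscr{F}_i\to\mathscr{H}om(\mathscr{F}_i,\pi^!\mathscr{O}_X)$ satisfies the compatibility $(\tau^*\phi)^*=\epsilon\,\phi$, up to the canonical identifications in the paper's diagrams. Then $c\phi$ satisfies it with the same $\epsilon$ for every scalar $c$. So rescaling by $-1$ never turns a symmetric form into an alternating one. For fixed $\mathscr{F}_i$, such $\phi$ are unique up to a scalar, and $\tau$ acts on them by a sign. That sign is determined by the canonical $\tau$-linearization of $\pi^!\mathscr{O}_X$ coming from $\pi\circ\tau=\pi$, so it has to be computed.

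\emph{Existence is not shown either.} ``Split $\omega$ by taking half of the ramification divisor'' is not an argument. $X_i$ is in general a non-normal surface, and Weil-divisor manipulations there do not directly give rank-one MCM sheaves with the required self-duality. You never show that $\omega$ has the form $\mathscr{F}\otimes\tau^*\mathscr{F}$ at all.

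\emph{The missing idea} is what membership in $\mathcal{A}_0$ forces; the paper's proof rests on it.
\begin{itemize}
\item On $X_i\cong Z_1\times Z_2$ the involution is $\tau_1\times\tau_2$, with fixed locus $\mathrm{Fix}(\tau_1)\times\mathrm{Fix}(\tau_2)$.
\item A pointwise fixed curve is therefore only possible if one $\tau_j$ fixes its whole factor. That factor, say $Z_2$, is then the zero section of $T^*_{C_2}$, i.e.\ $b_{2i}=0$ in the paper's notation.
\item Hence $X_i\cong Z_1\times C_2$ is the base change of the $\tau_1$-stable spectral curve $q\colon Z_1\to C_1$ along the flat projection $X\to C_1$, with $\tau=\tau_1\times\mathrm{id}$.
\item The paper takes a symplectic Higgs bundle $q_*\mathscr{F}'_i$ on $C_1$ from the curve case and pulls $(\mathscr{F}'_i,\theta'_i,Q'_i)$ back. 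Flat base change preserves MCM, perfectness, alternation and anti-self-adjointness.
\end{itemize}

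\emph{How to finish your own route.} With this reduction your duality construction can be completed directly.
\begin{itemize}
\item $Z_1$ is $\tau_1$-stable and is not the zero section, so its degree $m$ over $C_1$ is even.
\item Then $\omega_q\cong q^*K_{C_1}^{m-1}$, with generator the functional ``coefficient of $\eta^{m-1}$''. This generator is odd under $\eta\mapsto-\eta$.
\item Take $\mathscr{F}=q^*\kappa^{\otimes(m-1)}$ with $\kappa^{\otimes 2}\cong K_{C_1}$. This gives $\mathscr{F}\otimes\tau^*\mathscr{F}\cong\omega_q$ with $\epsilon=-1$, so $\mathrm{tr}(s\cdot\tau^*t)$ is alternating.
\item Pulling back to $Z_1\times C_2$ gives the lemma.
\end{itemize}
The same recipe on $Z_1\times Z_2$ with both factors $\tau$-stable of even degree gives $\epsilon=(-1)^2=+1$, a symmetric form. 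So the sign genuinely depends on the fixed curve, which is why the paper treats $\mathcal{A}_2$ separately.
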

\begin{proof}
By \cite[Lemma 2.4]{hausel2012prym}, the underlying reduced curve of each irreducible component of $C_{jb_j}$ is again a spectral cover for $j=1,2$.
By Lemma \ref{lem: X_b to C_1bC_2b}, we can assume $X_i\cong C_{1b_{1i}}\times_{\mathbb{K}}C_{2b_{2i}}$, where the irreducible component $C_{jb_{ji}}$ of $C_{jb_j}$ is a spectral cover of $C_j$ with spectral data $b_{ji}$ for $j=1,2$.
Since $\tau(X_i)=X_i$ and $X_i$ is irreducible, and there is $x\in X_i$ with $\dim\mathscr{O}_{X_i,x}=1$ such that $\tau(x)=x$, we see that $b_{1i}=0$ or $b_{2i}=0$. We can assume $b_{2i}=0$. Then $b_{1i}\in \oplus_{j=1}^mH^0(C_1,S^{2j}\Omega_{C_1})$ where $m=[K(C_{1b_{1i}}):K(C_1)]$.
Let $q_i:C_{1b_{1i}}\to C_1$ be the natural map.
Since $C_1$ is a smooth projective curve and $C_{1b_{1i}}$ is irreducible, there is $(\mathscr{E}_i={q_i}_*\mathscr{F}_i',\theta_i',Q_i')$, where  $\mathscr{F}_i'$ is a maximal Cohen--Macaulay $\mathscr{O}_{C_{1b_{1i}}}$-module of generic rank $1$, $\theta_{i}':\mathscr{T}_{C_1}\to {q_i}_*\mathscr{O}_{C_{1b_{1i}}}\to \mathscr{E}nd_{\mathscr{O}_{C_1}}(\mathscr{E}_i')$, and $Q_i':\mathscr{E}_i'\otimes \mathscr{E}_i'\to \mathscr{O}_{C_1}$ is a perfect alternating bilinear form such that $\theta_i'$ is anti-self-adjoint with respect to $Q_i'$. 
Since $b_{2i}=0$, there is a cartesian diagram 
\[\begin{tikzcd}
	{X_i} & {C_{1b_{1i}}} \\
	{X} & {C_1}.
	\arrow["{q_i'}", from=1-1, to=1-2]
	\arrow["{p_b\circ \pi_i}", from=1-1, to=2-1]
	\arrow["\square"{description}, draw=none, from=1-1, to=2-2]
	\arrow["{q_i}", from=1-2, to=2-2]
	\arrow["{q}", from=2-1, to=2-2]
\end{tikzcd}\]
Let $\mathscr{F}_i={q_i'}_*\mathscr{F}_i',\mathscr{E}_i={p_b}_*{\pi_i}_*\mathscr{F}_i$, and $\theta_i,Q_i$ be induced by $\theta_i', Q_i'$.
Since $q_i'$ and $q$ are flat,  we see that $\mathscr{F}_i$ is a maximal Cohen--Macaulay $\mathscr{O}_{X_i}$-module of generic rank $1$ and $Q_i:\mathscr{E}_i\otimes \mathscr{E}_i\to \mathscr{O}_X$ is a perfect alternating bilinear form such that $\theta_i$ is anti-self-adjoint with respect to $Q_i$. 

\end{proof}

\begin{lemma}\label{lemma: A_1 case}
For $i\in \mathcal{A}_1$, there is $(\mathscr{E}_i={\pi_i}_*\mathscr{F}_i,\theta_i,Q_i)$, where  $\mathscr{F}_i$ is a maximal Cohen--Macaulay $\mathscr{O}_{Y_i}$-module of generic rank $1$, $\theta_{i}:\mathscr{T}_X\to {p_b}_*\mathscr{O}_{X_{b}}\to {\pi_i}_*\mathscr{O}_{Y_{i}}\to \mathscr{E}nd_{\mathscr{O}_X}(\mathscr{E}_i)$, and $Q_i:\mathscr{E}_i\otimes \mathscr{E}_i\to \mathscr{O}_X$ is a perfect alternating bilinear form such that $\theta_i$ is anti-self-adjoint with respect to $Q_i$. 
\end{lemma}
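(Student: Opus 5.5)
For $i=(i_1,i_2)\in\mathcal{A}_1$ we follow the classical "swapped pair" construction $E\oplus E^\vee$. Here $Y_i=X_{i_1}^{nor}\sqcup X_{i_2}^{nor}$, and $\hat{\tau}_i$ exchanges the two components. Since $X=C_1\times C_2$, Lemma~\ref{lem: Y_i=C times C' sm} shows that both components are smooth projective surfaces. Hence $\pi_i$ restricted to each component is finite flat (Lemma~\ref{lemma: pi flat}), and it is a flat projective local complete intersection, so Lemma~\ref{lemma: dualizing sheaf} applies.

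First choose any line bundle $\mathscr{L}$ on $X_{i_1}^{nor}$, for instance $\mathscr{O}$. Set $\mathscr{F}_i$ equal to $\mathscr{L}$ on $X_{i_1}^{nor}$ and to $\hat{\tau}_i^*\bigl(\mathscr{L}^{-1}\otimes\omega_{\pi_{i_1}}\bigr)$ on $X_{i_2}^{nor}$, where $\omega_{\pi_{i_1}}=\pi_{i_1}^!\mathscr{O}_X$ is the relative dualizing sheaf. Both pieces are invertible on regular surfaces, so $\mathscr{F}_i$ is maximal Cohen--Macaulay of generic rank $1$. Pushing forward through $Y_i\to X_b$ keeps it maximal Cohen--Macaulay by Lemma~\ref{lemma: A to B, E CM}, exactly as in the proof of Lemma~\ref{lem: (E,theta)}. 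Lemma~\ref{lem: X_b of ChenNgo} then makes $\mathscr{E}_i=\pi_{i*}\mathscr{F}_i$ a vector bundle with Higgs field $\theta_i$ induced by the $\mathscr{O}_{X_b}$-module structure. By Lemma~\ref{lemma: dualizing sheaf}(2),
\[
\pi_{i_1*}\mathscr{L}^{\vee}\otimes\ldots\;\cong\;\pi_{i_1*}\mathscr{H}om(\mathscr{L},\omega_{\pi_{i_1}})\cong(\pi_{i_1*}\mathscr{L})^{\vee}.
\]
Also, $\pi_{i_2}\circ\hat{\tau}_i=\pi_{i_1}$. Together these give $\mathscr{E}_i\cong V\oplus V^{\vee}$ with $V=\pi_{i_1*}\mathscr{L}$.

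Next define $Q_i$ as the standard alternating form on $V\oplus V^\vee$:
\[
Q_i\bigl((v,\phi),(w,\psi)\bigr)=\psi(v)-\phi(w).
\]
It is perfect and alternating. The remaining point is anti-self-adjointness. A local section $t$ of $\mathscr{T}_X$ acts on $V$ through multiplication by the function $x_t$ on $X_{i_1}$ given by the tautological coordinate. On $X_{i_2}$ it acts by the corresponding function, and because $\tau$ is induced by $x\mapsto -x$, pulling back along $\hat{\tau}_i$ gives multiplication by $-x_t$. Under the trace duality, the transpose of multiplication by $f$ on $\pi_*\mathscr{L}$ is multiplication by $f$ on $\pi_*\mathscr{H}om(\mathscr{L},\omega)$. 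So the action on $V^\vee$ is $-\theta_V^{t}$, and $\theta_i=\mathrm{diag}(\theta_V,-\theta_V^t)$ satisfies $Q_i(\theta v,w)+Q_i(v,\theta w)=0$. Equivalently, the Higgs field on $V^\vee$ is the dual Higgs field, whose spectral cover is $\tau(X_{i_1})=X_{i_2}$. This is consistent with the spectral datum.

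The main obstacle is this last compatibility: verifying carefully that the Higgs field on $\pi_{i_2*}$ of the pulled-back dual sheaf agrees, under the trace-duality isomorphism of Lemma~\ref{lemma: dualizing sheaf}(2) composed with $\hat{\tau}_i^*$, with $-\theta_V^t$ rather than $\theta_V^t$. I would check this locally on $\mathcal{U}_j$. There $\tau_j'$ is given by $x_{kj}\mapsto -x_{kj}$, and the trace pairing is $\mathscr{O}_{X_{i_1}}$-bilinear, so multiplication operators are self-transpose. The sign then comes entirely from $\tau$.
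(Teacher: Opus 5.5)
Your proposal is correct and is essentially the paper's own argument. The paper takes $\mathscr{F}_i$ to be $\omega_{\pi_i}|_{Z_1}$ on one component and $\mathscr{O}$ on the other, which is exactly your construction with $\mathscr{L}=\omega_{\pi_{i_1}}$. Its sign-twisted isomorphism $\phi:\hat{\tau}_i^*\mathscr{F}_i\to\mathscr{H}om_{\mathscr{O}_{Y_i}}(\mathscr{F}_i,\omega_{\pi_i})$, pushed forward through trace duality, is just the standard symplectic form on $V\oplus V^{\vee}$, and anti-self-adjointness comes from $\tau^*x_t=-x_t$ as you say.

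One small correction: local freeness of $\mathscr{E}_i$ follows directly from $\pi_i$ being finite flat and $\mathscr{F}_i$ invertible. You should not cite Lemma~\ref{lem: X_b of ChenNgo} for it, since that lemma requires a sheaf of generic rank $1$ on all of $X_b$, not only on $X_{i_1}\cup X_{i_2}$.
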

\begin{proof}
Let $i=(i_1,i_2)$ and $Z_j=X_{i_j}^{nor}$ for $j=1,2$.
We may assume that $Z_1=Z_2$ and the action of $\hat{\tau}_i$ on $Y_i=Z_1\sqcup Z_2$ is the involution that swaps the two components. Then $\bar{Y}_i\cong Z_1$.
By Lemma \ref{lem: Y_i=C times C' sm}, $Y_i$ and $\bar{Y}_i$ are smooth projective over $\mathbb{K}$. 
Thus, $\bar{\pi}_i$ is a flat projective local complete intersection and $\hat{\pi}_i$ is \'etale projective. 
By Lemma \ref{lemma: dualizing sheaf},  $\omega_{\bar{\pi}_i}$ is invertible and $\omega_{\pi_i}\cong {\hat{\pi}_i}^*\omega_{\bar{\pi}_i}$. 
Thus, we may assume that $\omega_{\pi_i}={q_1}_*L_1\oplus {q_2}_*L_2$ where $L=L_j$ is an invertible sheaf on $Z_j$ and $q_j:Z_j\to Y_i$ is the natural map for $j=1,2$.
Let $\mathscr{F}_i={q_1}_*L_1\oplus {q_2}_*\mathscr{O}_{Z_2}$ be the invertible sheaf on $Y_i$.
Then $\hat{\tau}_i^*\mathscr{F}_i={q_1}_*\mathscr{O}_{Z_1} \oplus {q_2}_*L_2 $ and $\hat{\tau}_i^*\mathscr{F}_i\otimes\mathscr{F}_i\cong  \omega_{\pi_i}$.
Let $\phi_1:\hat{\tau}_i^*\mathscr{F}_i|_{Z_1}\to \mathscr{H}om_{\mathscr{O}_{Y_i}}(\mathscr{F}_i,\omega_{\pi_i})|_{Z_1}$ be the isomorphism induced by the natural isomorphism $\varphi_1:\mathscr{O}_{Z_1}\to \mathscr{H}om_{\mathscr{O}_{Z_1}}(L_1,L_1)$, and $\phi_2:\hat{\tau}_i^*\mathscr{F}_i|_{Z_2}\to \mathscr{H}om_{\mathscr{O}_{Y_i}}(\mathscr{F}_i,\omega_{\pi_i})|_{Z_2}$ be the isomorphism induced by the isomorphism $\varphi_2:L_2\to \mathscr{H}om_{\mathscr{O}_{Z_2}}(\mathscr{O}_{Z_2},L_2)$ such that following diagrams
\[\begin{tikzcd}
	\mathscr{H}om_{\mathscr{O}_{Z_2}}(\mathscr{H}om_{\mathscr{O}_{Z_2}}(\mathscr{O}_{Z_2},L_2),L_2) & \mathscr{H}om_{\mathscr{O}_{Z_2}}(L_2,L_2) \\
	\mathscr{O}_{Z_2}, &
	\arrow["{\varphi_2^*}", from=1-1, to=1-2]
	\arrow["{\rho_2}", from=1-1, to=2-1]
	\arrow["{-\varphi_1}"', from=2-1, to=1-2]
\end{tikzcd}\]
\[\begin{tikzcd}
	\mathscr{H}om_{\mathscr{O}_{Z_1}}(\mathscr{H}om_{\mathscr{O}_{Z_1}}(L_1,L_1),L_1) & \mathscr{H}om_{\mathscr{O}_{Z_1}}(\mathscr{O}_{Z_1},L_1) \\
	L_1.&
	\arrow["{\varphi_1^*}", from=1-1, to=1-2]
	\arrow["{\rho_1}", from=1-1, to=2-1]
	\arrow["{-\varphi_2}"', from=2-1, to=1-2]
\end{tikzcd}\]
are commutative, where $\varphi_j^*=\mathscr{H}om_{\mathscr{O}_{Z_j}}(-,L_j)(\varphi_j)$ and $\rho_j$ is the natural isomorphism for $j=1,2$.
We obtain the isomorphism $\phi:\hat{\tau}_i^*\mathscr{F}_i\to \mathscr{H}om_{\mathscr{O}_{Y_i}}(\mathscr{F}_i,\omega_{\pi_i})$ such that $\phi|_{Z_1}=\phi_1$ and $\phi|_{Z_2}=\phi_2$.
Hence we have the commutative diagram 
\[\begin{tikzcd}
	{\mathscr{H}om_{\mathscr{O}_{Y_i}}(\hat{\tau}_i^*\mathscr{H}om_{\mathscr{O}_{Y_i}}(\mathscr{F}_i,\omega_{\pi_i}),\omega_{\pi_i})} & {\hat{\tau}_i^*\mathscr{F}_i} \\
	{\mathscr{H}om_{\mathscr{O}_{Y_i}}(\hat{\tau}_i^*\hat{\tau}_i^*\mathscr{F}_i,\omega_{\pi_i})} & { \mathscr{H}om_{\mathscr{O}_{Y_i}}(\mathscr{F}_i,\omega_{\pi_i})}.
	\arrow["{\sim}", from=1-1, to=1-2]
	\arrow["{(\hat{\tau}_i^*\phi)^*:=\mathscr{H}om_{\mathscr{O}_{Y_i}}(-,\omega_{\pi_i})(\hat{\tau}_i^*\phi)}", from=1-1, to=2-1]
	\arrow["{-\phi}", from=1-2, to=2-2]
	\arrow["{\sim}", from=2-1, to=2-2]
\end{tikzcd}\]

Let $\mathscr{E}_i={\pi_i}_*\mathscr{F}_i$ and $\theta_{i}:\mathscr{T}_X\to {p_b}_*\mathscr{O}_{X_{b}}\to {\pi_i}_*\mathscr{O}_{Y_{i}}\to \mathscr{E}nd_{\mathscr{O}_X}(\mathscr{E}_i)$.
Since $Y_i$ is regular, the $\mathscr{O}_{Y_i}$-module $\mathscr{F}_i$ is maximal Cohen--Macaulay of generic rank $1$ and $\mathscr{E}_i$ is locally free. 
By Lemma \ref{lemma: dualizing sheaf}, there is the natural isomorphism
\begin{align*}
    \Phi_i: \ &\mathscr{E}_i={\pi_i}_*\mathscr{F}_i\xrightarrow{\sim}{\pi_i}_*(\hat{\tau}_{i*}\hat{\tau}_i^*\mathscr{F}_i)\xrightarrow{\sim}{\pi_i}_*\hat{\tau}_i^*\mathscr{F}_i\\
    &\xrightarrow{{\pi_i}_*\phi}{\pi_i}_*\mathscr{H}om_{\mathscr{O}_{Y_i}}(\mathscr{F}_i,\omega_{\pi_i})\xrightarrow{\sim}\mathscr{H}om_{\mathscr{O}_{X}}({\pi_i}_*\mathscr{F}_i,\mathscr{O}_{X})=\mathscr{E}_i^{\vee}.\\
\end{align*}
Hence the form $Q_i:\mathscr{E}_i\otimes_{\mathscr{O}_X}\mathscr{E}_i\to \mathscr{O}_X$ induced by $\Phi_i$ is perfect bilinear.
Next, we claim that $Q_i$ is alternating.
Consider the isomorphism
\begin{align*}
 \Phi_i^*&:\mathscr{H}om_{\mathscr{O}_X}(\mathscr{E}_i^{\vee},\mathscr{O}_X)\xrightarrow{\sim}\mathscr{H}om_{\mathscr{O}_X}({\pi_i}_*\mathscr{H}om_{\mathscr{O}_{Y_i}}(\mathscr{F}_i,\omega_{\pi_i}),\mathscr{O}_X)\\
&\xrightarrow{({\pi_i}_*\phi)^*}\mathscr{H}om_{\mathscr{O}_X}({\pi_i}_*\hat{\tau}_i^*\mathscr{F}_i,\mathscr{O}_X)\xrightarrow{\sim}\mathscr{H}om_{\mathscr{O}_X}({\pi_i}_*\hat{\tau}_{i*}\hat{\tau}_i^*\mathscr{F}_i,\mathscr{O}_X)\xrightarrow{\sim}\mathscr{E}_i^{\vee}.
\end{align*}
By Lemma \ref{lemma: dualizing sheaf}, one may verify that the following diagram commutes:
\[\begin{tikzcd}
	{\mathscr{H}om_{\mathscr{O}_X}({\pi_i}_*\mathscr{H}om_{\mathscr{O}_{Y_i}}(\mathscr{F}_i,\omega_{\pi_i}),\mathscr{O}_X)} & {\mathscr{H}om_{\mathscr{O}_X}({\pi_i}_*\hat{\tau}_i^*\mathscr{F}_i,\mathscr{O}_X)} \\
	{\mathscr{H}om_{\mathscr{O}_X}({\pi_i}_*\hat{\tau}_{i*}\hat{\tau}_i^*\mathscr{H}om_{\mathscr{O}_{Y_i}}(\mathscr{F}_i,\omega_{\pi_i}),\mathscr{O}_X)} & { \mathscr{H}om_{\mathscr{O}_{X}}({\pi_i}_*\hat{\tau}_{i*}\hat{\tau}_i^*\hat{\tau}_i^*\mathscr{F}_i,\mathscr{O}_{X})}\\
    {\pi_i}_*{\mathscr{H}om_{\mathscr{O}_{Y_i}}(\hat{\tau}_i^*\mathscr{H}om_{\mathscr{O}_{Y_i}}(\mathscr{F}_i,\omega_{\pi_i}),\omega_{\pi_i})} & { {\pi_i}_*\mathscr{H}om_{\mathscr{O}_{Y_i}}(\hat{\tau}_i^*\hat{\tau}_i^*\mathscr{F}_i,\omega_{\pi_i})}\\
    {\pi_i}_*\hat{\tau}_i^*\mathscr{F}_i
    &{\pi_i}_*\mathscr{H}om_{\mathscr{O}_{Y_i}}(\mathscr{F}_i,\omega_{\pi_i}).
	\arrow["{({\pi_i}_*\phi)^*}", from=1-1, to=1-2]
	\arrow["{\sim}", from=1-1, to=2-1]
	\arrow["{\sim}", from=1-2, to=2-2]
	\arrow["{\sim}", from=2-1, to=2-2]
    \arrow["{\sim}", from=2-1, to=3-1]
    \arrow["{\sim}", from=2-2, to=3-2]
    \arrow["{{\pi_i}_*(\hat{\tau}_i^*\phi)^*}", from=3-1, to=3-2]
    \arrow["{\sim}", from=3-1, to=4-1]
    \arrow["{\sim}", from=3-2, to=4-2]
    \arrow["{-{\pi_i}_*\phi}", from=4-1, to=4-2]
\end{tikzcd}\]
Hence we obtain the the commutative diagram 
\[\begin{tikzcd}
	\mathscr{E}_i & \mathscr{H}om_{\mathscr{O}_X}(\mathscr{E}_i^{\vee},\mathscr{O}_X) \\
	\mathscr{E}_i^{\vee}. & 
	\arrow["{\sim}", from=1-1, to=1-2]
	\arrow["{-\Phi_i}", from=1-1, to=2-1]
	\arrow["{\Phi_i^*}", from=1-2, to=2-1]
\end{tikzcd}\]
So $Q_i$ is alternating.

Finally, we claim that $\theta_i$ is anti-self-adjoint with respect to $Q_i$. 
Note that $\Phi_i$ induces the isomorphism 
\[\Phi_i':{\pi_i}_*\hat{\tau}_i^*\mathscr{F}_i
    \xrightarrow{{\pi_i}_*\phi}{\pi_i}_*\mathscr{H}om_{\mathscr{O}_{Y_i}}(\mathscr{F}_i,\omega_{\pi_i})\xrightarrow{\sim}\mathscr{H}om_{\mathscr{O}_{X}}({\pi_i}_*\mathscr{F}_i,\mathscr{O}_{X})=\mathscr{E}_i^{\vee}
\]
of ${\pi_i}_*\mathscr{O}_{Y_i}$-modules, given ${\pi_i}_*\hat{\tau}_i^*\mathscr{F}_i$ a structure of ${\pi_i}_*\mathscr{O}_{Y_i}$-module by ${\pi_i}_*\hat{\tau}_i^*\mathscr{O}_{Y_i}\cong {\pi_i}_*\mathscr{O}_{Y_i}$ and $\mathscr{E}_i^{\vee}$ a natural structure of ${\pi_i}_*\mathscr{O}_{Y_i}$-module.
Since we have the isomorphism ${\pi_i}_*\hat{\tau}_i^\#:{\pi_i}_*\mathscr{O}_{Y_i}\to {\pi_i}_*\hat{\tau}_{i*}\mathscr{O}_{Y_i}\cong {\pi_i}_*\mathscr{O}_{Y_i}$, this gives $E_i':={\pi_i}_*\mathscr{F}_i$ a structure of ${\pi_i}_*\mathscr{O}_{Y_i}$-module.
One may verify that $E_i'\to {\pi_i}_*\hat{\tau}_i^*\mathscr{F}_i$ is a isomorphism of ${\pi_i}_*\mathscr{O}_{Y_i}$-modules.
Then $\Phi_i:E_i'\to \mathscr{E}_i^{\vee}$ is a isomorphism of ${\pi_i}_*\mathscr{O}_{Y_i}$-modules.
Hence $\theta_i$ is anti-self-adjoint with respect to $Q_i$. 
This completes the proof.
\end{proof}

For $i\in \mathcal{A}_2$, $X_i$ has no fixed prime cycle of $\tau$. Then there is an open subset $U$ of $X$ such that $\dim(X\setminus U)=0$ and $W_i:=\pi_i^{-1}(U)$ has no fixed prime cycle of $\hat{\tau}_i$ and $\phi_i':={\hat{\pi}_i}|_{W_i}$ is \'etale for any $i\in \mathcal{A}_2$. Let $W'_i=\bar{\pi}_i^{-1}(U)=W_i/<\hat{\tau}_i>$ and $\phi_i=\pi_i|_{W}$ for $i\in \mathcal{A}_2$.

\begin{lemma}\label{lemma: varphi}
For $i\in \mathcal{A}_2$, there are an invertible sheaf $\mathscr{L}_i$ on $W_i$ and an isomorphism $\varphi_i:\hat{\tau}_i^*\mathscr{L}_i\to \mathscr{H}om_{\mathscr{O}_{W_i}}(\mathscr{L}_i,{\phi_i'}^*L_i')$ where $L_i'$ is an invertible sheaf on $W_i'$ and $\omega_{\phi_i}\cong {\phi_i'}^*L_i'$, such that there is the commutative diagram 
\[\begin{tikzcd}
	{\mathscr{H}om_{\mathscr{O}_{W_i}}(\hat{\tau}_i^*\mathscr{H}om_{\mathscr{O}_{W_i}}(\mathscr{L}_i,{\phi_i'}^*L_i'),{\phi_i'}^*L_i')} & {\hat{\tau}_i^*\mathscr{L}_i} \\
	{\mathscr{H}om_{\mathscr{O}_{W_i}}(\hat{\tau}_i^*\hat{\tau}_i^*\mathscr{L}_i,{\phi_i'}^*L_i')} & { \mathscr{H}om_{\mathscr{O}_{W_i}}(\mathscr{L}_i,{\phi_i'}^*L_i')}.
	\arrow["{\sim}", from=1-1, to=1-2]
	\arrow["{(\hat{\tau}_i^*\varphi_i)^*:=\mathscr{H}om_{\mathscr{O}_{W_i}}(-,{\phi_i'}^*L_i')(\hat{\tau}_i^*\varphi_i)}", from=1-1, to=2-1]
	\arrow["{-\varphi_i}", from=1-2, to=2-2]
	\arrow["{\sim}", from=2-1, to=2-2]
\end{tikzcd}\]
\end{lemma}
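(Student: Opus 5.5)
The plan is to exploit the product structure of Lemma~\ref{lem: Y_i=C times C' sm} and reduce the required ``antisymmetric square root'' of ${\phi_i'}^*L_i'$ to statements on curves. By Lemma~\ref{lem: Y_i=C times C' sm} and the proof of Lemma~\ref{lem: X_b to C_1bC_2b}, $X_i=Z_1\times_{\mathbb{K}}Z_2$ with $Z_j$ an irreducible component of $C_{jb_j}$, and $Y_i=Z_1'\times Z_2'$ with $Z_j'$ the normalizations. The involution $\tau$ is induced by $x_1\mapsto -x_1$, $x_2\mapsto -x_2$, so it is a product $\tau_1\times\tau_2$; hence $\hat\tau_i=\hat\tau_1\times\hat\tau_2$, with $\hat\tau_j$ an involution of $Z_j'$ over $C_j$. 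Since $i\in\mathcal{A}_2$, no prime divisor of $X_i$ is fixed. The fixed locus $\mathrm{Fix}(\hat\tau_1)\times\mathrm{Fix}(\hat\tau_2)$ can therefore contain no curve, so both $\hat\tau_j$ are nontrivial and the fixed locus is finite. I take $U$ so that $W_i$ avoids this finite set. Then $W_i$ is smooth, $\phi_i'$ is an \'etale double cover, and by Lemma~\ref{lemma: dualizing sheaf}(4) we have $\omega_{\phi_i}\cong{\phi_i'}^*\omega_{\bar\pi_i|_{W_i'}}$. This lets me take $L_i'=\omega_{\bar\pi_i}|_{W_i'}$, modified if necessary by the $2$-torsion line bundle $\eta$ defining the cover, which does not change ${\phi_i'}^*L_i'$.

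Next, write $\omega_{\pi_i}=\omega_{Y_i}\otimes\pi_i^*\omega_X^{-1}$ using Lemma~\ref{lemma: dualizing sheaf}(5). Since $\omega_{Y_i}=\omega_{Z_1'}\boxtimes\omega_{Z_2'}$ and $\omega_X=\omega_{C_1}\boxtimes\omega_{C_2}$, this gives $\omega_{\pi_i}\cong\omega_{q_1}\boxtimes\omega_{q_2}$, where $q_j:Z_j'\to C_j$. I will work with the curves separately. On each curve, with the involution $\hat\tau_j$, I want a line bundle $M_j$ on $Z_j'$ and an isomorphism $\varphi_j:\hat\tau_j^*M_j\xrightarrow{\sim}M_j^{-1}\otimes\omega_{q_j}$ whose ``transpose'' equals $\epsilon_j\varphi_j$ with a prescribed sign $\epsilon_j\in\{\pm1\}$. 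This is the classical curve situation: symmetric and antisymmetric Prym-type line bundles for spectral curves of $\mathrm{Sp}$ and $\mathrm{O}$ Higgs bundles, as in \cite{hausel2012prym}. It can also be done directly by choosing a $\hat\tau_j$-anti-invariant divisor. On a curve, every divisor class of the correct norm lifts, because one may move divisors off the ramification points and split each point orbit.

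Then I set $\mathscr{L}_i=(M_1\boxtimes M_2)|_{W_i}$ and $\varphi_i=(\varphi_1\boxtimes\varphi_2)|_{W_i}$. The transpose sign multiplies, giving $\epsilon_1\epsilon_2$, so I choose $\epsilon_1=-1$ and $\epsilon_2=+1$. With this choice the square in the statement commutes with the required $-\varphi_i$. The remaining check is that $\omega_{q_1}\boxtimes\omega_{q_2}$ restricted to $W_i$ agrees with ${\phi_i'}^*L_i'$ compatibly with the natural $\hat\tau_i$-linearizations. This follows from functoriality of the trace in Lemma~\ref{lemma: dualizing sheaf}.

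The main obstacle is the curve step with a prescribed sign. For the sign $-1$ one may need to twist by the $2$-torsion bundle of the double cover, and when $\hat\tau_j$ has fixed points the sign is forced by the local action at the ramification points. If a fixed sign cannot be arranged on one factor, I would first try swapping the roles of the two factors. Failing that, I would absorb the discrepancy into the choice of $L_i'$ via $\eta$ on $W_i'$, which is legitimate because only ${\phi_i'}^*L_i'$ enters the statement.
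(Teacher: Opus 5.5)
Your reduction to the two curve factors only works if the factor on which you impose $\epsilon_2=+1$ carries a \emph{free} involution, and the proposal never establishes this. You note yourself that at a fixed point $p$ of $\hat\tau_j$ the involution acts by $-1$ on the fibre of $\omega_{q_j}$ (locally $du\mapsto -du$) but by $+1$ on $(\hat\tau_j^*M_j\otimes M_j)_p=M_{j,p}^{\otimes 2}$. So no isomorphism with $\epsilon_j=+1$ exists on a ramified factor.

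You read the $\mathcal{A}_2$ condition pointwise, and so only obtain that $\mathrm{Fix}(\hat\tau_1)\times\mathrm{Fix}(\hat\tau_2)$ is finite. That leaves open the case where both factors are ramified, and this case is a real obstruction, not a technicality. Near a point of $\mathrm{Fix}(\hat\tau_1)\times\mathrm{Fix}(\hat\tau_2)$, take local coordinates with $\hat\tau_i(u_1,u_2)=(-u_1,-u_2)$. Then $\omega_{\pi_i}=\omega_{Y_i}\otimes\pi_i^*\omega_X^{-1}$ has an invariant generator built from $du_1\wedge du_2$, and $\mathscr{L}_i$ is trivial on the punctured neighbourhood. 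An anti-invariant isomorphism would therefore be a function $g$ with $g(-u_1,-u_2)=-g(u_1,u_2)$ that is a unit off the point. By Hartogs it extends to the local ring, where it must be a unit, since a non-unit vanishes along a curve. Yet oddness forces $g$ to vanish at the fixed point. Hence neither swapping the factors nor any sign bookkeeping can help in that case.

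The missing idea is to use the $\mathcal{A}_2$ hypothesis in its set-theoretic form: no prime divisor $V$ with $\tau(V)=V$. This is also what the paper uses to find $D_i$ with ${\phi_i'}_*D_i=D_i'+D_i''$. If $p\in Z_1$ were fixed by $\tau_1$, then $\{p\}\times Z_2\subset X_i$ would be a $\tau$-stable prime divisor, and symmetrically for $Z_2$. Hence both $\hat\tau_j$ are free, so $\hat\tau_i$ is free on all of $Y_i$. Write $\bar q_j:\bar Z_j=Z_j'/\hat\tau_j\to C_j$ and let $\eta_j$ be the $2$-torsion bundle of the \'etale cover $Z_j'\to\bar Z_j$. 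The sign $\epsilon_j=+1$ (resp.\ $-1$) amounts to $\mathrm{Nm}(M_j)\cong\omega_{\bar q_j}$ (resp.\ $\omega_{\bar q_j}\otimes\eta_j$). Both are solvable, since the norm of an \'etale double cover of curves is surjective on Picard groups. With this, your product argument goes through.

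Your last-resort fallback, replacing $L_i'$ by $L_i'\otimes\eta$, is not legitimate. The unlabelled isomorphisms in the diagram use $\hat\tau_i^*{\phi_i'}^*L_i'\cong{\phi_i'}^*L_i'$, i.e.\ the descent datum of $L_i'$. Since $\hat\tau_i$ acts on ${\phi_i'}^*\eta\cong\mathscr{O}_{W_i}$ by $-1$, the twist flips exactly the sign you are trying to control. Read that way, the statement becomes empty and useless for the next step. There, $\omega_{\phi_i}\cong{\phi_i'}^*L_i'$ must be compatible with the trace, hence with the natural $\hat\tau_i$-action, for $\Phi_i$ to give an alternating form. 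This pins down $L_i'=\omega_{\bar\pi_i|_{W_i'}}$ as in the paper; the twisted choice would produce a symmetric form instead.

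Once repaired, your route is a genuine alternative to the paper's. The paper never splits into factors. It uses the absence of $\hat\tau_i$-stable prime divisors to solve ${\phi_i'}_*D_i=D_i'+D_i''$, i.e.\ $\mathrm{Nm}(\mathscr{L}_i)\cong L_i'\otimes\mathscr{F}_i$. An eigenspace computation on $H^0(W_i,{\phi_i'}^*\mathscr{F}_i^{\vee})$ then shows that the one-dimensional space of pairings $\hat\tau_i^*\mathscr{L}_i\otimes\mathscr{L}_i\to{\phi_i'}^*L_i'$ is anti-invariant.
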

\begin{proof}
By Lemma \ref{lemma: pi flat}, $\pi_i$ is finite flat. Then $\pi_i$ and $\bar{\pi}_i$ are finite surjective.
By \cite[III, Ex. 5.7]{hartshorne1977AG}, we can show that
$Y_i$ and $\bar{Y}_i$ are projective over $\mathbb{K}$, then $\pi_i$, $\hat{\pi}_i$ and $\bar{\pi}_i$ are projective.
By Lemma \ref{lem: Y_i=C times C' sm}, $Y_i$ is smooth projective over $\mathbb{K}$. 
Then $\phi_i$ is a flat projective local complete intersection.
Note that $\phi_i'$ is \'etale projective. Then $W_i'$ is regular, then $\bar{\pi}_i|_{W_i'}$ is a flat projective local complete intersection. 
By Lemma \ref{lemma: dualizing sheaf},  $\omega_{\phi_i'}\cong \mathscr{O}_{W_i}$, $L_i':=\omega_{\bar{\pi}_i|_{W_i'}}$ is invertible and $\omega_{\phi_i}\cong {\phi_i'}^*L_i'$. 

By \cite[Exercise 6.4.7]{liu2006algebraic}, there is a homomorphism of $\mathscr{O}_{W_i'}$-modules $\rho:{\phi_i'}_*\mathscr{O}_{W_i}\to {\phi_i'}_*\omega_{\phi_i'}\to \mathscr{O}_{W_i'}$ such that 
if $V=\mathrm{Spec}A'$ is an open subset of $W'_i$, then $\rho(V):A={\phi_i'}_*\mathscr{O}_{W_i'}(V)\to  \mathscr{O}_{W_i'}=A'$ sends $a\in A$ to $\mathrm{Tr}_{A/A'}(a)$.
Since $\phi_i'$ is finite flat of degree $2$, ${\phi_i'}_*\mathscr{O}_{W_i}$ is locally free of rank $2$.
One may verify that $\rho$ induces ${\phi'_i}_*\mathscr{O}_{W_i}\cong \mathscr{O}_{W_i'}\oplus \mathscr{F}_i$ and $\mathscr{F}_i$ is locally free of rank $1$.
By Lemma \ref{lemma: dualizing sheaf}, we have ${\phi'_i}_*\omega_{{\phi'_i}}\cong ({\phi'_i}_*\mathscr{O}_{W_i})^{\vee}$, then $ {\phi'_i}_*\mathscr{O}_{W_i}\cong ({\phi'_i}_*\mathscr{O}_{W_i})^{\vee}$ since  $\omega_{{\phi'_i}}\cong \mathscr{O}_{W_i}$, then $\mathscr{F}_i\cong \det({\phi'_i}_*\mathscr{O}_{W_i})\cong \det({\phi'_i}_*\mathscr{O}_{W_i})^{\vee}$, then $(\mathscr{F}_i)^2\cong \mathscr{O}_{W_i'}$, then ${\phi_i'}^*\mathscr{F}_i\cong \mathscr{O}_{W_i}$ since $\phi'_i$ is \'etale.

We can take a cycle $D_i',D_i''\in Z^1(W_i')$ and a cycle $D_i\in Z^1(W_i)$ such that $L_i'=\mathscr{O}_{W_i'}(D_i')$, $\mathscr{F}_i=\mathscr{O}_{W_i'}(D_i'')$ and ${\phi_i'}_*D_i=D_i'+D_i''$. Since $W_i$ has no fixed prime cycle of $\hat{\tau}_i$, $\hat{\tau}_i^*(D_i)+D_i={\phi_i'}^*(D'_i+D_i'')$.
Let $L_i=\mathscr{O}_{W_i}(D_i)$. 
Then $\hat{\tau}_i^*L_i\otimes L_i\cong{\phi_i'}^*(L_i'\otimes \mathscr{F}_i)\cong{\phi_i'}^*L_i'$. 
Let $\rho_{1\hat{\tau}_i}:\hat{\tau}_i^*(\hat{\tau}_i^*L_i\otimes L_i)\xrightarrow{\sim} \hat{\tau}_i^*L_i\otimes L_i$ and $\rho_{2\hat{\tau}_i}:\hat{\tau}_i^*({\phi_i'}^*L_i')\xrightarrow{\sim} {\phi_i'}^*L_i'$ be the natural morphisms. 
One may verify that $\rho_{1\hat{\tau}_i}\circ\hat{\tau}_i^*\rho_{1\hat{\tau}_i}=Id_{\hat{\tau}_i^*L_i\otimes L_i}$ and $\rho_{2\hat{\tau}_i}\circ\hat{\tau}_i^*\rho_{2\hat{\tau}_i}=Id_{{\phi_i'}^*L_i'}$. Let $H_i=<\hat{\tau}_i>$.
Then $\rho_{1\hat{\tau}_i}$ induces a $H_i$-linearization of $\hat{\tau}_i^*L_i\otimes L_i$ and $\rho_{2\hat{\tau}_i}$ induces a $H_i$-linearization of ${\phi_i'}^*L'_i$.
This induces a $H_i$-linearization $\rho_{\hat{\tau}_i}$ of $\mathscr{H}om_{\mathscr{O}_{W_i}}(\hat{\tau}_i^*L_i\otimes L_i,{\phi_i'}^*L_i')$.
Then it induces a $\mathbb{K}H_i$-module  $Hom_{\mathscr{O}_{W_i}}(\hat{\tau}_i^*L_i\otimes L_i,{\phi_i'}^*L_i')$ such that 
\[
\hat{\tau}_i.f=\rho_{2\hat{\tau}_i}\circ (\hat{\tau}_i^*f)\circ \rho_{1\hat{\tau}_i}^{-1},\ \  f\in Hom_{\mathscr{O}_{W_i}}(\hat{\tau}_i^*L_i\otimes L_i,{\phi'_i}^*L_i').
\]
We can take an isomorphim 
\[\rho_i':\mathscr{H}om_{\mathscr{O}_{W_i}}(\hat{\tau}_i^*L_i\otimes L_i,{\phi_i'}^*L_i')\to {\phi'_i}^*\mathscr{H}om_{\mathscr{O}_{W_i'}}(\mathscr{F}_i\otimes L_i',L_i').
\]
Let $\rho'_{\hat{\tau}_i}:\hat{\tau}_i^*({\phi'_i}^*\mathscr{H}om_{\mathscr{O}_{W_i'}}(\mathscr{F}_i\otimes L_i',L_i'))\xrightarrow{\sim} {\phi'_i}^*\mathscr{H}om_{\mathscr{O}_{W_i'}}(\mathscr{F}_i\otimes L_i',L_i')$ be the natural morphisms.
One may verify that $\rho'_{\hat{\tau}_i}\circ\hat{\tau}_i^*\rho'_{\hat{\tau}_i}=Id_{{\phi'_i}^*\mathscr{H}om_{\mathscr{O}_{W_i'}}(\mathscr{F}_i\otimes L_i',L_i')}$ and we have the commutative diagram 
\[\begin{tikzcd}
	{\hat{\tau}^*\mathscr{H}om_{\mathscr{O}_{W_i}}(\hat{\tau}_i^*L_i\otimes L_i,{\phi_i'}^*L_i')} & {\hat{\tau}^*({\phi'_i}^*\mathscr{H}om_{\mathscr{O}_{W_i'}}(\mathscr{F}_i\otimes L_i',L_i'))} \\
	{\mathscr{H}om_{\mathscr{O}_{W_i}}(\hat{\tau}_i^*L_i\otimes L_i,{\phi_i'}^*L_i')} & {{\phi'_i}^*\mathscr{H}om_{\mathscr{O}_{W_i'}}(\mathscr{F}_i\otimes L_i',L_i')}.
	\arrow["{\hat{\tau}_i^*\rho_i'}", from=1-1, to=1-2]
	\arrow["{\rho_{\hat{\tau}_i}}", from=1-1, to=2-1]
	\arrow["{\rho'_{\hat{\tau}_i}}", from=1-2, to=2-2]
	\arrow["{\rho_i'}", from=2-1, to=2-2]
\end{tikzcd}\]
Then $\rho'_{\hat{\tau}_i}$ induces a $H_i$-linearization of ${\phi'_i}^*\mathscr{H}om_{\mathscr{O}_{W_i'}}(\mathscr{F}_i\otimes L_i',L_i')$.
This induces $\mathbb{K}H_i$-modules $H^0(W_i,{\phi'_i}^*\mathscr{H}om_{\mathscr{O}_{W_i'}}(\mathscr{F}_i\otimes L_i',L_i'))$ and an isomorphism 
\[
\rho_i'(W_i):Hom_{\mathscr{O}_{W_i}}(\hat{\tau}_i^*L_i\otimes L_i,{\phi_i'}^*L_i')\to H^0(W_i,{\phi'_i}^*\mathscr{H}om_{\mathscr{O}_{W_i'}}(\mathscr{F}_i\otimes L_i',L_i'))
\]
of $\mathbb{K}H_i$-modules.
Let 
\[
H_{i}^{-}=\{f\in Hom_{\mathscr{O}_{W_i}}(\hat{\tau}_i^*L_i\otimes L_i,{\phi_i'}^*L_i')\mid \hat{\tau}_i.f=-f\}.
\]
Note that the $\mathbb{K}H_i$-module $H^0(W_i,{\phi'_i}^*\mathscr{H}om_{\mathscr{O}_{W_i'}}(\mathscr{F}_i\otimes L_i',L_i'))$ induces a $\mathbb{K}H_i$-module $H^0(W_i',{\phi_i'}_*{\phi'_i}^*\mathscr{H}om_{\mathscr{O}_{W_i'}}(\mathscr{F}_i\otimes L_i',L_i'))$ and an isomorphism $$H^0(W_i,{\phi'_i}^*\mathscr{H}om_{\mathscr{O}_{W_i'}}(\mathscr{F}_i\otimes L_i',L_i'))\to H^0(W_i',{\phi_i'}_*{\phi'_i}^*\mathscr{H}om_{\mathscr{O}_{W_i'}}(\mathscr{F}_i\otimes L_i',L_i')).$$
Let $H_i^{-'}$ be the image of $\rho_i'(W_i)(H_i^-)$ under this isomorphism.
One may verify that the natural map $\mathscr{H}om_{\mathscr{O}_{W_i'}}(\mathscr{F}_i\otimes L_i',L_i')\to {\phi_i'}_*{\phi_i'}^*\mathscr{H}om_{\mathscr{O}_{W_i'}}(\mathscr{F}_i\otimes L_i',L_i')$ induces an isomorphism $$H^0(W_i',\mathscr{H}om_{\mathscr{O}_{W_i'}}(\mathscr{F}_i\otimes L_i',L_i'))\cong H^0(W_i',{\phi_i'}_*{\phi_i'}^*\mathscr{H}om_{\mathscr{O}_{W_i'}}(\mathscr{F}_i\otimes L_i',L_i'))^H.$$
Since ${\phi_i'}_*\mathscr{O}_{W_i}\cong \mathscr{O}_{W_i'}\oplus \mathscr{F}_i$, we have ${\phi_i'}_*{\phi_i'}^*\mathscr{H}om_{\mathscr{O}_{W_i'}}(\mathscr{F}_i\otimes L_i',L_i')\cong {\phi_i'}_*\mathscr{O}_{W_i}\otimes \mathscr{F}_i^{\vee}\cong \mathscr{O}_{W_i'}\oplus \mathscr{F}_i^{\vee}$, thus, as $\mathbb{K}$-modules, we have
\begin{align*}
    H_i^-\cong H_i^{-'}&\cong \frac{H^0(W_i',{\phi_i'}_*{\phi_i'}^*\mathscr{H}om_{\mathscr{O}_{W_i'}}(\mathscr{F}_i\otimes L_i',L_i'))}{H^0(W_i',{\phi_i'}_*{\phi_i'}^*\mathscr{H}om_{\mathscr{O}_{W_i'}}(\mathscr{F}_i\otimes L_i',L_i'))^{H_i}}\\
   & \cong \frac{H^0(W_i',\mathscr{O}_{W_i'}\oplus \mathscr{F}_i^{\vee})}{H^0(W_i',\mathscr{F}_i^{\vee})}\\
   & \cong H^0(W_i',\mathscr{O}_{W_i'}).
\end{align*}
Since $Y_i,\bar{Y}_i$ are normal projective irreducible surfaces over $\mathbb{K}$ and $\dim(Y_i\setminus W_i)=\dim(\bar{Y}_i\setminus W_i')=0$ and $\hat{\tau}_i^*L_i\otimes L_i\cong {\phi_i'}^*L_i'$, we have 
\[
Hom_{\mathscr{O}_{W_i}}(\hat{\tau}_i^*L_i\otimes L_i,{\phi_i'}^*L_i')\cong H^0(W_i,\mathscr{O}_{W_i})\cong H^0(Y_i,\mathscr{O}_{Y_i})\cong \mathbb{K},
\]\[
H^0(W_i',\mathscr{O}_{W_i'})\cong H^0(\bar{Y}_i,\mathscr{O}_{\bar{Y}_i})\cong \mathbb{K}.
\]
Then $Hom_{\mathscr{O}_{W_i}}(\hat{\tau}_i^*L_i\otimes L_i,{\phi_i'}^*L_i')=H_i^-$.
So we can take an isomorphism $f_i\in H_i^-$. Note that $f_i$ induces an isomorphism $\varphi_i:\hat{\tau}_i^*L_i\to \mathscr{H}om_{\mathscr{O}_{W_i}}(L_i,{\phi_i'}^*L_i')$. Since $\hat{\tau}_i.f_i=-f_i$, we have the commutative diagram 
\[\begin{tikzcd}
	{\mathscr{H}om_{\mathscr{O}_{W_i}}(\hat{\tau}_i^*\mathscr{H}om_{\mathscr{O}_{W_i}}(L_i,{\phi_i'}^*L_i'),{\phi_i'}^*L_i')} & {\hat{\tau}_i^*L_i} \\
	{\mathscr{H}om_{\mathscr{O}_{W_i}}(\hat{\tau}_i^*\hat{\tau}_i^*L_i,{\phi_i'}^*L_i')} & { \mathscr{H}om_{\mathscr{O}_{W_i}}(L_i,{\phi_i'}^*L_i')}.
	\arrow["{\sim}", from=1-1, to=1-2]
	\arrow["{(\hat{\tau}_i^*\varphi_i)^*}", from=1-1, to=2-1]
	\arrow["{-\varphi_i}", from=1-2, to=2-2]
	\arrow["{\sim}", from=2-1, to=2-2]
\end{tikzcd}\]
\end{proof}

\begin{lemma}\label{lemma: L_i to tau_*tau^*L_i}
The canonical morphism $\mathscr{F}\to \hat{\tau}_{i*}\hat{\tau}_i^*\mathscr{F}$ is an isomorphism for $i\in \mathcal{A}_2$ and any coherent sheaf $\mathscr{F}$ on $W_i$.
\end{lemma}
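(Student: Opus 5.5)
The statement is essentially formal: $\hat{\tau}_i$ restricts to an automorphism of $W_i$, and for an isomorphism the unit of the adjunction $\hat{\tau}_i^*\dashv\hat{\tau}_{i*}$ is an isomorphism. First we check that $\hat{\tau}_i(W_i)=W_i$. This holds because $W_i=\pi_i^{-1}(U)$ and $\pi_i\circ\hat{\tau}_i=\pi_i$, so $\hat{\tau}_i^{-1}(W_i)=(\pi_i\circ\hat{\tau}_i)^{-1}(U)=W_i$. Together with $\hat{\tau}_i\circ\hat{\tau}_i=\mathrm{Id}$, this shows that $\hat{\tau}_i|_{W_i}$ is an involutive automorphism of the scheme $W_i$. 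In particular, it is a homeomorphism, and the map on structure sheaves $\hat{\tau}_i^\#:\mathscr{O}_{W_i}\to\hat{\tau}_{i*}\mathscr{O}_{W_i}$ is an isomorphism.

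Next, for an isomorphism of schemes $g:W\to W$ we have $g^*\mathscr{F}=g^{-1}\mathscr{F}\otimes_{g^{-1}\mathscr{O}_W}\mathscr{O}_W$, and $g^{-1}$ is simply transport of sheaves along the homeomorphism $g$. Since $g^\#:g^{-1}\mathscr{O}_W\to\mathscr{O}_W$ is an isomorphism, the tensor product does nothing, so $g^*\mathscr{F}\cong g^{-1}\mathscr{F}$ as sheaves of modules via $g^\#$. On sections over an open $V\subseteq W$ we then get $(g_*g^*\mathscr{F})(V)=(g^{-1}\mathscr{F})(g^{-1}(V))=\mathscr{F}(g(g^{-1}(V)))=\mathscr{F}(V)$. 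Under these identifications the canonical map $\mathscr{F}\to g_*g^*\mathscr{F}$ (the unit of adjunction) becomes the identity on sections, so it is an isomorphism of $\mathscr{O}_W$-modules. Alternatively, and more cleanly, since $g$ is an isomorphism, $g^*$ and $g_*$ are quasi-inverse equivalences of the categories of $\mathscr{O}_W$-modules, namely $g_*\cong(g^{-1})^*$. The unit of an adjoint equivalence is an isomorphism.

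One can also check it locally, and this is the only point requiring any care. Take an affine open $V=\mathrm{Spec}A\subseteq W_i$ together with its translate $\hat{\tau}_i^{-1}(V)=\mathrm{Spec}A'$, with ring isomorphism $t:A\to A'$. For a module $M$ over $A$, the unit is $M\to M\otimes_{A,t}A'$, viewed as an $A$-module via $t$. Since $t$ is an isomorphism, $m\mapsto m\otimes1$ is bijective. The two affines $V$ and $\hat{\tau}_i^{-1}(V)$ need not coincide, but this causes no problem, because the unit is defined globally and bijectivity is a stalkwise statement. Coherence of $\mathscr{F}$ is not even needed; the argument holds for any $\mathscr{O}_{W_i}$-module.
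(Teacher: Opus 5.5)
Your proof is correct and is essentially the paper's argument: the paper works on a $\hat{\tau}_i$-stable affine open $\mathrm{Spec}R\subseteq W_i$ and checks that the unit $M\to M\otimes_{R,f}R$ is bijective for the ring automorphism $f$ induced by $\hat{\tau}_i$, which is exactly your local check. Your version is a bit more complete in two respects. First, you verify $\hat{\tau}_i^{-1}(W_i)=W_i$ from $\pi_i\circ\hat{\tau}_i=\pi_i$, which the statement needs and the paper does not check. Second, your adjoint-equivalence argument avoids the paper's unstated use of a cover of $W_i$ by $\hat{\tau}_i$-stable affines (such a cover exists because $\pi_i$ is finite) and does not need any (quasi-)coherence hypothesis.
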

\begin{proof}
For any affine open subset $\mathrm{Spec}R$ of $W_i$ with action of $\hat{\tau}_i$, let $\mathscr{F}|_{\mathrm{Spec}R}=\tilde{M}$, and $f: R\to R':=R$ be the isomorphism induced by $\hat{\tau}_i$.
Then $\mathscr{F}|_{\mathrm{Spec}R}\to (\hat{\tau}_{i*}\hat{\tau}_i^*\mathscr{F})|_{\mathrm{Spec}R}$ induces a morphism $g:M\to M\otimes_R R'$ of $R$-modules, where $M\otimes_R R'$ has the $R$-module structure induced by $f$. One may verify that $g$ is an isomorphism. Hence $\mathscr{F}\to \hat{\tau}_{i*}\hat{\tau}_i^*\mathscr{F}$ is an isomorphism.
\end{proof}

For $i\in \mathcal{A}_2$, since $\phi_i\circ \hat{\tau}_i=\phi_i$, we have ${\phi_i}_* \circ  \hat{\tau}_{i*}\cong {\phi_i}_*$. 
By Lemma \ref{lemma: dualizing sheaf}, Lemma \ref{lemma: varphi} and Lemma \ref{lemma: L_i to tau_*tau^*L_i}, we have the natural isomorphism
\begin{align*}
    \Phi_i: \ &{\phi_i}_*\mathscr{L}_i\xrightarrow{\sim}{\phi_i}_*(\hat{\tau}_{i*}\hat{\tau}_i^*\mathscr{L}_i)\xrightarrow{\sim}{\phi_i}_*\hat{\tau}_i^*\mathscr{L}_i\\
    &\xrightarrow{{\phi_i}_*\varphi_i}{\phi_i}_*\mathscr{H}om_{\mathscr{O}_{W_i}}(\mathscr{L}_i,\phi_i'^*L_i')\xrightarrow{\sim}\mathscr{H}om_{\mathscr{O}_{U}}({\phi_i}_*\mathscr{L}_i,\mathscr{O}_{U}).
\end{align*}

\begin{lemma}\label{lemma: A_2' case}
For $i\in \mathcal{A}_2$, the form $Q_i':{\phi_i}_*\mathscr{L}_i\otimes_{\mathscr{O}_U}{\phi_i}_*\mathscr{L}_i\to \mathscr{O}_U$ induced by $\Phi_i$ is a perfect alternating bilinear form such that $\theta_i'$ is anti-self-adjoint with respect to $Q_i'$, where $\theta_{i}':\mathscr{T}_U\to ({p_b}_*\mathscr{O}_{X_{b}})|_U\to ({\pi_i}_*\mathscr{O}_{X_i})|_U\to \mathscr{E}nd_{\mathscr{O}_U}({\phi_i}_*\mathscr{L}_i)$. 
\end{lemma}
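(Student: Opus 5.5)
The plan is to run the $\mathcal{A}_1$ argument of Lemma \ref{lemma: A_1 case} verbatim on $U$. Here $W_i$ replaces $Y_i$, $\phi_i$ replaces $\pi_i$, ${\phi_i'}^*L_i'\cong\omega_{\phi_i}$ replaces $\omega_{\pi_i}$, and the isomorphism $\varphi_i$ of Lemma \ref{lemma: varphi} replaces $\phi$. First, perfectness is immediate: $\Phi_i$ is a composition of isomorphisms. These are the isomorphism of Lemma \ref{lemma: L_i to tau_*tau^*L_i}, the identification ${\phi_i}_*\hat{\tau}_{i*}\cong{\phi_i}_*$, the isomorphism ${\phi_i}_*\varphi_i$, and the duality isomorphism ${\phi_i}_*\mathscr{H}om_{\mathscr{O}_{W_i}}(\mathscr{L}_i,\omega_{\phi_i})\cong\mathscr{H}om_{\mathscr{O}_U}({\phi_i}_*\mathscr{L}_i,\mathscr{O}_U)$ from Lemma \ref{lemma: dualizing sheaf}(2). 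Also, ${\phi_i}_*\mathscr{L}_i$ is locally free on $U$, because $\phi_i$ is finite flat (Lemma \ref{lemma: pi flat}) and $\mathscr{L}_i$ is invertible. Hence $Q_i'$ is a perfect pairing.

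Second, I show that $Q_i'$ is alternating. I form the transpose $\Phi_i^*:({\phi_i}_*\mathscr{L}_i)^{\vee\vee}\to({\phi_i}_*\mathscr{L}_i)^{\vee}$ and aim to prove $\Phi_i^*\circ\mathrm{ev}=-\Phi_i$. For this I write the same ladder diagram as in the proof of Lemma \ref{lemma: A_1 case}. Its rows come from the naturality of the trace isomorphism of Lemma \ref{lemma: dualizing sheaf}(2) with respect to $\hat{\tau}_i$, using $\phi_i\circ\hat{\tau}_i=\phi_i$ and the compatibility of $\hat{\tau}_i^*$ with $\omega_{\phi_i}={\phi_i'}^*L_i'$ via the linearization $\rho_{2\hat{\tau}_i}$. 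Its bottom square is exactly the commutative square of Lemma \ref{lemma: varphi}, which contributes the sign $-1$. This gives $Q_i'(s,t)=-Q_i'(t,s)$. Since $\mathrm{char}\,\mathbb{K}=0$, antisymmetry already gives alternating. I expect this step to be the main obstacle: one has to check that the trace $\mathrm{tr}_{\phi_i}$ is $\hat{\tau}_i$-invariant under the chosen identification $\hat{\tau}_i^*\omega_{\phi_i}\cong\omega_{\phi_i}$. The way to see this is that $\omega_{\phi_i}=\mathscr{H}om_{\mathscr{O}_U}({\phi_i}_*\mathscr{O}_{W_i},\mathscr{O}_U)$ carries its canonical $\hat{\tau}_i$-action by precomposition with $\hat{\tau}_i^\#$, and evaluation at $1$ is invariant under it. One must also check that this canonical action agrees with the linearization $\rho_{2\hat{\tau}_i}$ used in Lemma \ref{lemma: varphi}, up to the choice made there. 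If they differed by a sign, that would be absorbed by the choice of $f_i\in H_i^-$.

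Third, I show that $\theta_i'$ is anti-self-adjoint. As in Lemma \ref{lemma: A_1 case}, the local sections of $\mathscr{T}_U$ act on ${\phi_i}_*\mathscr{L}_i$ through ${\phi_i}_*\mathscr{O}_{W_i}$, namely through the coordinate functions $x_1,x_2$ of $T_X^*$. I need $Q_i'(as,t)=Q_i'(s,\hat{\tau}_i^\#(a)t)$ for $a\in{\phi_i}_*\mathscr{O}_{W_i}$. This holds because $\Phi_i$ is an isomorphism of ${\phi_i}_*\mathscr{O}_{W_i}$-modules once the source is given the structure twisted by $\hat{\tau}_i^\#$. The first two maps in $\Phi_i$ intertwine $a$ with $\hat{\tau}_i^\#(a)$ by construction, and the remaining maps are $\mathscr{O}_{W_i}$-linear. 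Finally, $\hat{\tau}_i^\#(x_j)=-x_j$, since $\tau$ is induced by $x_{1i}\mapsto-x_{1i}$, $x_{2i}\mapsto-x_{2i}$. So $Q_i'(\theta_i'(v)s,t)=-Q_i'(s,\theta_i'(v)t)$, which is the required anti-self-adjointness.
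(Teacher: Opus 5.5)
Your proposal is correct and follows the paper's proof essentially step for step. Perfectness comes from $\Phi_i$ being a composite of isomorphisms. Alternation comes from the same ladder diagram, whose bottom square is the sign-reversing square of Lemma~\ref{lemma: varphi}. Anti-self-adjointness comes from $\Phi_i$ being ${\phi_i}_*\mathscr{O}_{W_i}$-linear for the $\hat{\tau}_i^\#$-twisted module structure, together with $\hat{\tau}_i^\#(x_j)=-x_j$. You are in fact more explicit than the paper about the trace compatibility and about using $\mathrm{char}\,\mathbb{K}=0$.

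One small correction to your fallback. A sign discrepancy between the canonical action on $\omega_{\phi_i}$ and $\rho_{2\hat{\tau}_i}$ could not be absorbed by the choice of $f_i$: $\mathrm{Hom}(\hat{\tau}_i^*\mathscr{L}_i\otimes\mathscr{L}_i,{\phi_i'}^*L_i')$ is one-dimensional and equal to $H_i^-$, so a flipped action would force a symmetric form. No such discrepancy arises, however. We have $\omega_{\phi_i}\cong\omega_{\phi_i'}\otimes{\phi_i'}^*L_i'$, and $\omega_{\phi_i'}\cong\mathscr{O}_{W_i}$ is trivialized by the $\hat{\tau}_i$-invariant trace of the \'etale double cover $\phi_i'$. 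Any other identification differs by a constant in $H^0(W_i,\mathscr{O}_{W_i}^*)=\mathbb{K}^*$, which commutes with $\hat{\tau}_i$.
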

\begin{proof}
Since $\Phi_i:{\phi_i}_*\mathscr{L}_i\to \mathscr{H}om_{\mathscr{O}_{U}}({\phi_i}_*\mathscr{L}_i,\mathscr{O}_{U})$ is an isomorphism, $Q_i'$ is a perfect bilinear form.
Next, we claim that $Q_i'$ is alternating. Let $\mathscr{E}_i={\phi_i}_*\mathscr{L}_i$. 
Consider the isomorphism
\begin{align*}
 \Phi_i^*&:\mathscr{H}om_{\mathscr{O}_U}(\mathscr{E}_i^{\vee},\mathscr{O}_U)\xrightarrow{\sim}\mathscr{H}om_{\mathscr{O}_U}({\phi_i}_*\mathscr{H}om_{\mathscr{O}_{W_i}}(\mathscr{L}_i,\phi_i'^*L_i'),\mathscr{O}_U)\\
&\xrightarrow{({\phi_i}_*\varphi_i)^*}\mathscr{H}om_{\mathscr{O}_U}({\phi_i}_*\hat{\tau}_i^*\mathscr{L}_i,\mathscr{O}_U)\xrightarrow{\sim}\mathscr{H}om_{\mathscr{O}_U}({\phi_i}_*\hat{\tau}_{i*}\hat{\tau}_i^*\mathscr{L}_i,\mathscr{O}_U)\xrightarrow{\sim}\mathscr{E}_i^{\vee}.
\end{align*}
By Lemma \ref{lemma: dualizing sheaf}, Lemma \ref{lemma: varphi} and Lemma \ref{lemma: L_i to tau_*tau^*L_i}, one may verify that the following diagram commutes:
\[\begin{tikzcd}
	{\mathscr{H}om_{\mathscr{O}_U}({\phi_i}_*\mathscr{H}om_{\mathscr{O}_{W_i}}(\mathscr{L}_i,\phi_i'^*L_i'),\mathscr{O}_U)} & {\mathscr{H}om_{\mathscr{O}_U}({\phi_i}_*\hat{\tau}_i^*\mathscr{L}_i,\mathscr{O}_U)} \\
	{\mathscr{H}om_{\mathscr{O}_U}({\phi_i}_*\hat{\tau}_{i*}\hat{\tau}_i^*\mathscr{H}om_{\mathscr{O}_{W_i}}(\mathscr{L}_i,\phi_i'^*L_i'),\mathscr{O}_U)} & { \mathscr{H}om_{\mathscr{O}_{U}}({\phi_i}_*\hat{\tau}_{i*}\hat{\tau}_i^*\hat{\tau}_i^*\mathscr{F}_i,\mathscr{O}_{U})}\\
    {\phi_i}_*{\mathscr{H}om_{\mathscr{O}_{W_i}}(\hat{\tau}_i^*\mathscr{H}om_{\mathscr{O}_{W_i}}(\mathscr{L}_i,\phi_i'^*L_i'),\phi_i'^*L_i')} & { {\phi_i}_*\mathscr{H}om_{\mathscr{O}_{W_i}}(\hat{\tau}_i^*\hat{\tau}_i^*\mathscr{L}_i,\omega_{\pi_i})}\\
    {\phi_i}_*\hat{\tau}_i^*\mathscr{L}_i
    &{\phi_i}_*\mathscr{H}om_{\mathscr{O}_{W_i}}(\mathscr{L}_i,\phi_i'^*L_i').
	\arrow["{({\phi_i}_*\varphi_i)^*}", from=1-1, to=1-2]
	\arrow["{\sim}", from=1-1, to=2-1]
	\arrow["{\sim}", from=1-2, to=2-2]
	\arrow["{\sim}", from=2-1, to=2-2]
    \arrow["{\sim}", from=2-1, to=3-1]
    \arrow["{\sim}", from=2-2, to=3-2]
    \arrow["{{\phi_i}_*(\hat{\tau}_i^*\varphi_i)^*}", from=3-1, to=3-2]
    \arrow["{\sim}", from=3-1, to=4-1]
    \arrow["{\sim}", from=3-2, to=4-2]
    \arrow["{-{\phi_i}_*\varphi_i}", from=4-1, to=4-2]
\end{tikzcd}\]
Hence we obtain the the commutative diagram 
\[\begin{tikzcd}
	\mathscr{E}_i & \mathscr{H}om_{\mathscr{O}_U}(\mathscr{E}_i^{\vee},\mathscr{O}_U) \\
	\mathscr{E}_i^{\vee}. & 
	\arrow["{\sim}", from=1-1, to=1-2]
	\arrow["{-\Phi_i}", from=1-1, to=2-1]
	\arrow["{\Phi_i^*}", from=1-2, to=2-1]
\end{tikzcd}\]
So $Q_i'$ is alternating.

Finally, we claim that $\theta_i'$ is anti-self-adjoint with respect to $Q_i'$. 
Note that $\Phi_i$ induces the isomorphism 
\[\Phi_i':{\phi_i}_*\hat{\tau}_i^*\mathscr{L}_i
    \xrightarrow{{\phi_i}_*\varphi_i}{\phi_i}_*\mathscr{H}om_{\mathscr{O}_{W_i}}(\mathscr{L}_i,\phi_i'^*L_i')\xrightarrow{\sim}\mathscr{H}om_{\mathscr{O}_{U}}({\phi_i}_*\mathscr{L}_i,\mathscr{O}_{U})=\mathscr{E}_i^{\vee}
\]
of ${\phi_i}_*\mathscr{O}_{W_i}$-modules, given ${\phi_i}_*\hat{\tau}_i^*\mathscr{L}_i$ a structure of ${\phi_i}_*\mathscr{O}_{W_i}$-module by ${\phi_i}_*\hat{\tau}_i^*\mathscr{O}_{W_i}\cong {\phi_i}_*\mathscr{O}_{W_i}$ and $\mathscr{E}_i^{\vee}$ a natural structure of ${\phi_i}_*\mathscr{O}_{W_i}$-module.
Since we have the isomorphism ${\phi_i}_*\hat{\tau}_i^\#:{\phi_i}_*\mathscr{O}_{W_i}\to {\phi_i}_*\hat{\tau}_{i*}\mathscr{O}_{W_i}\cong {\phi_i}_*\mathscr{O}_{W_i}$, this gives $E_i':={\phi_i}_*\mathscr{L}_i$ a structure of ${\phi_i}_*\mathscr{O}_{W_i}$-module.
One may verify that $E_i'\to {\phi_i}_*\hat{\tau}_i^*\mathscr{L}_i$ is a isomorphism of ${\phi_i}_*\mathscr{O}_{W_i}$-modules.
Then $\Phi_i:E_i'\to \mathscr{E}_i^{\vee}$ is a isomorphism of ${\phi_i}_*\mathscr{O}_{W_i}$-modules.
Hence $\theta_i'$ is anti-self-adjoint with respect to $Q_i'$. 
This completes the proof.
\end{proof}

\begin{lemma}\label{lemma: (E,theta,Q)}
There is $(\mathscr{E}_b={p_b}_*\mathscr{F}_b,\theta_b,Q_b)\in h_{X,Sp_{2n}}^{-1}(b)$, where  $\mathscr{F}_b$ is a maximal Cohen--Macaulay $\mathscr{O}_{X_b}$-module of generic rank $1$, $\theta_{b}:\mathscr{T}_X\to {p_b}_*\mathscr{O}_{X_{b}}\to \mathscr{E}nd_{\mathscr{O}_X}(\mathscr{E}_b)$, and $Q_b:\mathscr{E}_b\otimes \mathscr{E}_b\to \mathscr{O}_X$ is a perfect alternating bilinear form such that $\theta_b$ is anti-self-adjoint with respect to $Q_b$. 
\end{lemma}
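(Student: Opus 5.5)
The plan is to assemble $(\mathscr{E}_b,\theta_b,Q_b)$ as an orthogonal direct sum of the pieces built for the three types of $\tau$-orbits of irreducible components. The index sets $\mathcal{A}_0$, $\mathcal{A}_1$, $\mathcal{A}_2$ partition these orbits. For $i\in\mathcal{A}_0$ we take the triple of Lemma \ref{lemma: A_0 case}, and for $i\in\mathcal{A}_1$ the triple of Lemma \ref{lemma: A_1 case}. For $i\in\mathcal{A}_2$ we so far only have a triple over the big open set $U$, given by Lemma \ref{lemma: A_2' case}. We must extend it across the finite set $X\setminus U$.

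\textbf{Extension step for $\mathcal{A}_2$.} Let $j:W_i\hookrightarrow Y_i$ be the inclusion and set $\mathscr{F}_i:=(j_*\mathscr{L}_i)^{\vee\vee}$, a reflexive rank-one sheaf on the normal surface $Y_i$. In fact $Y_i$ is smooth by Lemma \ref{lem: Y_i=C times C' sm}, so $\mathscr{F}_i$ is invertible, equal to $\mathscr{O}_{Y_i}(\overline{D_i})$. It is therefore maximal Cohen--Macaulay, and by Lemma \ref{fomular} the sheaf $\mathscr{E}_i:={\pi_i}_*\mathscr{F}_i$ is locally free. Since $\operatorname{codim}(X\setminus U)=2$ and all sheaves involved are reflexive, the following extend uniquely by Hartogs:
\begin{enumerate}
\item the isomorphism $\Phi_i:\mathscr{E}_i|_U\to\mathscr{E}_i^\vee|_U$ extends to an isomorphism $\mathscr{E}_i\to\mathscr{E}_i^\vee$, because $\mathscr{H}om(\mathscr{E}_i,\mathscr{E}_i^\vee)$ is locally free and so is its inverse;
\item the Higgs field $\theta_i'$ extends to $\theta_i$, which equals the natural action $\mathscr{T}_X\to{p_b}_*\mathscr{O}_{X_b}\to{\pi_i}_*\mathscr{O}_{Y_i}\to\mathscr{E}nd(\mathscr{E}_i)$, already defined globally.
\end{enumerate}
The identities "$Q_i$ is alternating" and "$\theta_i$ is anti-self-adjoint" are equalities of sections of locally free sheaves that hold on the dense open $U$. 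Hence they hold on $X$.

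\textbf{Direct sum step.} Let $\mathscr{F}_b$ be the sheaf on $X_b$ given by $\bigoplus_i f_{i*}\mathscr{F}_i$, where $f_i:Y_i\to X_b$ (for $\mathcal{A}_0$, $Y_i=X_i$ is a closed subscheme). Set
\[
\mathscr{E}_b={p_b}_*\mathscr{F}_b=\bigoplus_i\mathscr{E}_i,\qquad Q_b=\bigoplus_i Q_i,\qquad \theta_b=\bigoplus_i\theta_i .
\]
The orthogonal sum of perfect alternating forms is perfect and alternating, and anti-self-adjointness holds componentwise. Each $f_{i*}\mathscr{F}_i$ is maximal Cohen--Macaulay on its image by Lemma \ref{lemma: A to B, E CM}. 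Because $X_b$ is generically étale of degree $2n$ over $X$ and the $Y_i$ cover each component exactly once generically, $\mathscr{F}_b$ is maximal Cohen--Macaulay of generic rank $1$ on $X_b$. Its Higgs field is the one induced by ${p_b}_*\mathscr{O}_{X_b}$. Lemma \ref{lem: X_b of ChenNgo} then gives $(\mathscr{E}_b,\theta_b)\in h_{X,GL_{2n}}^{-1}(b)$, so $(\mathscr{E}_b,\theta_b,Q_b)\in h^{-1}_{X,Sp_{2n}}(b)$.

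\textbf{Main obstacle.} The delicate point is the Cohen--Macaulay and generic-rank-one claim for $\mathscr{F}_b$ on the possibly non-normal scheme $X_b$. We need the multiplicity of each component in $X_b$ to be one; this holds because $b(\eta)$ lies in the multiplicity-free stratum, so $X_b$ is generically reduced. Depth is preserved under finite pushforward by Lemma \ref{lemma: A to B, E CM}, and by Lemma \ref{lemma: CM iff free} this is equivalent to ${p_b}_*\mathscr{F}_b$ being locally free. That local freeness is already established summand by summand, so we will reduce the claim to it.
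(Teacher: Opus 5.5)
Your proposal is correct and follows essentially the same route as the paper. In both, the $\mathcal{A}_2$ data is extended from $U$ to $X$ using regularity of $Y_i$ and the codimension-two complement, the resulting triple is orthogonally summed with the $\mathcal{A}_0$ and $\mathcal{A}_1$ pieces, and the conclusion comes from Lemma \ref{lemma: A to B, E CM} and Lemma \ref{lem: X_b of ChenNgo}. Your reflexive-hull and Hartogs argument for extending $\Phi_i$, $\theta_i$ and the alternating and anti-self-adjoint identities just spells out the extension step that the paper compresses into a single sentence.
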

\begin{proof}
For $i\in \mathcal{A}_2$, $\dim(Y_i\setminus W_i)=\dim(X\setminus U)=0$ and $Y_i$ is regular by Lemma \ref{lem: Y_i=C times C' sm}, then there is $(\mathscr{E}_i={\pi_i}_*\mathscr{F}_i,\theta_i,Q_i)$ by Lemma \ref{lemma: A_2' case}, such that $\mathscr{F}_i$ is a maximal Cohen--Macaulay $\mathscr{O}_{X_i}$-module of generic rank $1$,  $\theta_{i}:\mathscr{T}_X\to {p_b}_*\mathscr{O}_{X_{b}}\to {\pi_i}_*\mathscr{O}_{X_{i}}\to \mathscr{E}nd_{\mathscr{O}_X}(\mathscr{E}_i)$,  $Q_i:\mathscr{E}_i\otimes \mathscr{E}_i\to \mathscr{O}_X$ is a perfect alternating bilinear form such that $\theta_i$ is anti-self-adjoint with respect to $Q_i$, and $(\mathscr{F}_i|_{W_i},\mathscr{E}_i|_{U},\theta_i|_U,Q_i|_U)=(\mathscr{L}_i,{\phi_i}_*\mathscr{L}_i,\theta_i',Q_i')$. 
For $i\in \mathcal{A}_0\cup \mathcal{A}_1$, by Lemma \ref{lemma: A_0 case} and Lemma \ref{lemma: A_1 case}, there is $(\mathscr{E}_i={\pi_i}_*\mathscr{F}_i,\theta_i,Q_i)$, where $\mathscr{F}_i$ is a maximal Cohen--Macaulay $\mathscr{O}_{X_i}$-module of generic rank $1$,  $\theta_{i}:\mathscr{T}_X\to {p_b}_*\mathscr{O}_{X_{b}}\to {\pi_i}_*\mathscr{O}_{X_{i}}\to \mathscr{E}nd_{\mathscr{O}_X}(\mathscr{E}_i)$, and  $Q_i:\mathscr{E}_i\otimes \mathscr{E}_i\to \mathscr{O}_X$ is a perfect alternating bilinear form such that $\theta_i$ is anti-self-adjoint with respect to $Q_i$

Let $Z=\sqcup_{i\in \mathcal{A}_0\cup \mathcal{A}_1\cup \mathcal{A}_2}Y_i$, and $q_i:Y_i\to Z$ be the natural morphism for $i\in \mathcal{A}_0\cup \mathcal{A}_1\cup \mathcal{A}_2$.
Let $\mathscr{F}=\oplus_{i\in \mathcal{A}_0\cup \mathcal{A}_1\cup \mathcal{A}_2}{q_i}_*\mathscr{F}_i$. 
Then $\mathscr{F}$ is a maximal Cohen--Macaulay $\mathscr{O}_{Z}$-module of generic rank $1$.
Since the natural morphism $f:Z\to \bigsqcup_{1\leq i\leq r} X_i\to X_b$ is finite surjective, the $\mathscr{O}_{X_b}$-module $\mathscr{F}_b:=f_*\mathscr{F}$ is maximal Cohen--Macaulay of generic rank $1$ by Lemma \ref{lemma: A to B, E CM}.
Since $\pi_i=p_b\circ f\circ q_i$ for $i\in \mathcal{A}_0\cup \mathcal{A}_1\cup \mathcal{A}_2$, we see that $\mathscr{E}_b={\pi_{b}}_*\mathscr{F}_b\cong \oplus_{i\in \mathcal{A}_0\cup \mathcal{A}_1\cup \mathcal{A}_2}\mathscr{E}_i$ and $\theta_{b}:\mathscr{T}_X\to {p_b}_*\mathscr{O}_{X_{b}}\to \mathscr{E}nd_{\mathscr{O}_X}(\mathscr{E}_b)$ is induced by $\theta_i$ for all $i\in \mathcal{A}_0\cup \mathcal{A}_1\cup \mathcal{A}_2$, then $(\mathscr{E}_b,\theta_{b})\in h_{X,GL_{2n}}^{-1}(b)$ by Lemma \ref{lem: X_b of ChenNgo}.
Let $Q_b=\oplus_{i\in \mathcal{A}_0\cup \mathcal{A}_1\cup \mathcal{A}_2}Q_i$.
Then $Q_b:\mathscr{E}_b\otimes \mathscr{E}_b\to \mathscr{O}_X$ is a perfect alternating bilinear form such that $\theta_b$ is anti-self-adjoint with respect to $Q_b$.
Hence $(\mathscr{E}_b,\theta_b,Q_b)\in h_{X,Sp_{2n}}^{-1}(b)$.
\end{proof}

\begin{theorem}\label{thm: sd_Sp(2n) surj for X=C1 times C2}
     The morphism $sd_{X,Sp_{2n}}$ is surjective.
\end{theorem}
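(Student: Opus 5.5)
The plan is to reduce to the generic case already handled by Lemma~\ref{lemma: (E,theta,Q)}, using the decomposition of spectral data and direct sums, as in the proofs of Theorems~\ref{thm: sd_SL(2n+1) surj} and \ref{thm: sd_SL(n) surj for X=C1 times C2}. Take $b\in\mathscr{B}_{X,Sp_{2n}}(\mathbb{K})$. Generically, $b(\eta)$ is a multiset of $2n$ points in the fibre of $T_X^*$ that is stable under $z\mapsto -z$. The point $0$ may occur with some even multiplicity $2m_0$. The other points come in pairs $\{\beta,-\beta\}$ of equal multiplicity. Group the nonzero pairs by multiplicity: for each $\mu$, let $\alpha_\mu$ be the number of pairs $\{\beta,-\beta\}$ whose points each occur with multiplicity $\mu$. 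Following Lemma~\ref{lem: decomposition in SS}, we get spectral data $c_\mu:X\to Sym^{2\alpha_\mu}(T_X^*/X)$ with $c_\mu(\eta)$ of the form $[\beta_1,\dots,\beta_{\alpha_\mu},-\beta_1,\dots,-\beta_{\alpha_\mu}]$ with all entries distinct. We also get the zero datum of multiplicity $2m_0$, and $b=\tau_\mu\circ(\dots)$.

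The first step is to check that each $c_\mu$ lies in $\mathscr{B}_{X,Sp_{2\alpha_\mu}}$. The set of points of $c_\mu$ is an invariant sub-multiset of $b(\eta)$, so its odd polarized symmetric functions vanish at $\eta$. Since $c_\mu$ is a section of $Sym^{2\alpha_\mu}(T_X^*/X)$ defined on all of $X$ and $X$ is integral, they vanish identically. To see that $c_\mu$ is a regular section at all, I would use uniqueness in Lemma~\ref{lem: decomposition in SS}: the involution $-1$ on $T_X^*$ acts on $Sym^{\alpha}$, and each $b_i$ produced by that lemma is carried to itself. Then $c_\mu$ is obtained by splitting $b_i$ further according to the $\pm$-pairing. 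This requires a normalization argument analogous to Lemma~\ref{lem: decomposition in SS}(1), applied to the closure of the stratum with the pairing. I expect this step to be the main obstacle. I would first try to apply the existing lemma to the data with multiplicities, noting that only which points coincide matters.

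With this in hand, Lemma~\ref{lemma: (E,theta,Q)} applies to each $c_\mu$, since $c_\mu(\eta)$ satisfies the genericity hypothesis there. It gives $(\mathscr{E}_\mu,\theta_\mu,Q_\mu)$ with spectral datum $c_\mu$, where $Q_\mu$ is perfect and alternating and $\theta_\mu$ is anti-self-adjoint. For the zero part, take $(\mathscr{O}_X^{2m_0},0,Q_{std})$ with the standard symplectic form. Now form
\[
\mathscr{E}=\mathscr{O}_X^{2m_0}\oplus\bigoplus_\mu \mathscr{E}_\mu^{\oplus\mu},\qquad Q=Q_{std}\oplus\bigoplus_\mu Q_\mu^{\oplus\mu},
\]
with $\theta$ the corresponding direct sum. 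An orthogonal direct sum of perfect alternating forms is perfect and alternating, and anti-self-adjointness is preserved blockwise. By Lemma~\ref{lem: decomposition in SS}(3), the spectral datum of $(\mathscr{E},\theta)$ is the composite through $\tau_\mu$, which is $b$. Hence $(\mathscr{E},\theta,Q)\in h_{X,Sp_{2n}}^{-1}(b)$, and $sd_{X,Sp_{2n}}$ is surjective.
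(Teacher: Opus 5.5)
Your proposal is correct and follows the paper's proof. You decompose $b$ by multiplicity via Lemma~\ref{lem: decomposition in SS}, and you handle the zero eigenvalue, which has even multiplicity, by $(\mathscr{O}_X^{\oplus\mu_m},0,Q_0)$ with a standard symplectic form. You then apply Lemma~\ref{lemma: (E,theta,Q)} to the remaining $\pm$-symmetric parts and take orthogonal direct sums, exactly as the paper does.

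The step you flag as the main obstacle is one the paper disposes of with ``we may verify'', and it needs no new normalization argument. Your $c_\mu$ is either the paper's $b_i$ itself or $b_i$ with its single zero point removed. The latter factorization holds because $[z]\mapsto[z,0]$, $Sym^{\alpha-1}(T_X^*/X)\to Sym^{\alpha}(T_X^*/X)$, is a closed immersion and $X$ is reduced.
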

\begin{proof}
For any $b\in \mathscr{B}_{X,Sp_{2n}}(\mathbb{K})\subseteq \mathscr{B}_{X,GL_{2n}}(\mathbb{K})$, then there is a partition $\mu=(\mu_1^{\alpha_1},\dots,\mu_s^{\alpha_s})$ of $2n$ such that 
$0<\mu_1<\cdots<\mu_s,0<\alpha_i,\sum_{i=1}^s\mu_i\alpha_i=n$ and $b(\eta)\in Sym^{2n}_{\mu}(T_X^*/X)$. 
By Lemma \ref{lem: decomposition in SS}, there exist spectral data $$b_i: X\to Sym^{\alpha_i}(T_X^*/X),1\leq i\leq s,$$ such that $b_i(\eta)\in Sym_{(1^{\alpha_i})}^{\alpha_i}(T_X^*/X)$ for $1\leq i\leq s$ and $b$ is
the following composition of morphisms
\[
X\xrightarrow{(b_1,\dots,b_s)} Sym^{\alpha_1}(T_X^*/X)\times_X\cdots\times_X Sym^{\alpha_s}(T_X^*/X)\xrightarrow{\tau_{\mu}} Sym^{2n}(T_X^*/X).
\]
Since $b\in \mathscr{B}_{X,Sp_{2n}}(\mathbb{K})$, we have $b(X)\subseteq Sym^{2n,Sp}(T_X^*/X)$.
Let $\mathcal{M}=\{i\mid 1\leq i\leq s, 2\nmid \alpha_i\}$.

If $\mathcal{M}=\emptyset$, then we may verify that $b_i\in \mathscr{B}_{X,Sp_{\alpha_i}}(\mathbb{K})$. Thus, there is $(\mathscr{E}_{b_i},\theta_{b_i},Q_{b_i})\in h_{X,Sp_{\alpha_i}}^{-1}(b_i)$ for any $1\leq i\leq s$ by Lemma \ref{lemma: (E,theta,Q)}. 
Let $\mathscr{E}_b=\oplus_{1\leq i\leq s}\mathscr{E}_{b_i}^{\oplus\mu_i}$, $\theta_b=\oplus_{1\leq i\leq s}\theta_{b_i}^{\oplus\mu_i}$ and $Q_b=\oplus_{1\leq i\leq s}Q_{b_i}^{\oplus\mu_i}$.
By Lemma \ref{lem: decomposition in SS}, $(\mathscr{E}_b,\theta_b)\in h_{X,GL_{2n}}^{-1}(b)$.
We may verify that $Q_b:\mathscr{E}_b\otimes \mathscr{E}_b\to \mathscr{O}_X$ is a perfect alternating bilinear form such that $\theta_b$ is anti-self-adjoint with respect to $Q_b$. Hence $(\mathscr{E}_b,\theta_b,Q_b)\in h_{X,Sp_{2n}}^{-1}(b)$.

If $m\in \mathcal{M}\neq \emptyset$, then  $\#\mathcal{M}= 1$ and $2\mid \mu_m$ by the definitions of $b$ and $\mu$.
We may verify that $b_i\in \mathscr{B}_{X,Sp_{\alpha_i}}(\mathbb{K})$ for any $1\leq i\neq m\leq s$, and there is $b_m'\in \mathscr{B}_{X,Sp_{\alpha_m-1}}(\mathbb{K})$ such that $b_m'(\eta)\in Sym_{(1^{\alpha_m-1})}^{\alpha_m-1}(T_X^*/X)$ and $b_m$ is
the following composition of morphisms
\[
X\xrightarrow{(b_m',0)} Sym^{\alpha_m-1}(T_X^*/X)\times_X Sym^{1}(T_X^*/X)\to Sym^{\alpha_m}(T_X^*/X).
\]
By Lemma \ref{lemma: (E,theta,Q)}, there is $(\mathscr{E}_{b_i},\theta_{b_i},Q_{b_i})\in h_{X,Sp_{\alpha_i}}^{-1}(b_i)$ for any $1\leq i\neq m\leq s$, and there is $(\mathscr{E}_{b_m'},\theta_{b_m'},Q_{b_m'})\in h_{X,Sp_{\alpha_m-1}}^{-1}(b_m')$.
Since $2\mid \mu_m$, there is a perfect alternating bilinear form $Q_0$ on $\mathscr{F}:=\mathscr{O}_X^{\oplus\mu_m}$.  
Let $\mathscr{E}_b=(\oplus_{1\leq i\neq m\leq s}\mathscr{E}_{b_i}^{\oplus\mu_i})\oplus(\mathscr{E}_{b_m'}^{\oplus\mu_m})\oplus\mathscr{F}$, $\theta_b=(\oplus_{1\leq i\neq m\leq s}\theta_{b_i}^{\oplus\mu_i})\oplus(\theta_{b_m'}^{\oplus\mu_m})\oplus 0$ and $Q_b=(\oplus_{1\leq i\neq m\leq s}Q_{b_i}^{\oplus\mu_i})\oplus(Q_{b_m'}^{\oplus\mu_m})\oplus Q_0$.
By Lemma \ref{lem: decomposition in SS}, $(\mathscr{E}_b,\theta_b)\in h_{X,GL_{2n}}^{-1}(b)$.
We may verify that $Q_b:\mathscr{E}_b\otimes \mathscr{E}_b\to \mathscr{O}_X$ is a perfect alternating bilinear form such that $\theta_b$ is anti-self-adjoint with respect to $Q_b$. Hence $(\mathscr{E}_b,\theta_b,Q_b)\in h_{X,Sp_{2n}}^{-1}(b)$.

\end{proof}

\bibliographystyle{amsalpha}\bibliography{Math}

@article{BNR89,
  title={Spectral curves and the generalised theta divisor.},
  author={Ramanan, S and Narasimhan, MS and Beauville, A},
  journal={Journal f{\"u}r die reine und angewandte Mathematik},
  volume={394},
  pages={169--179},
  year={1989}
}

@book{serre2000local,
  title={Local algebra},
  author={Serre, Jean-Pierre},
  year={2000},
  publisher={Springer Science \& Business Media}
}

@book{fulton2013intersection,
  title={Intersection theory},
  author={Fulton, William},
  volume={2},
  year={2013},
  publisher={Springer Science \& Business Media}
}

@article{chen2020hitchin,
author = {T. H. Chen and B. C. Ng{\^o}},
title = {{On the Hitchin morphism for higher-dimensional varieties}},
volume = {169},
journal = {Duke Mathematical Journal},
number = {10},
publisher = {Duke University Press},
pages = {1971 -- 2004},
year = {2020},
doi = {10.1215/00127094-2019-0085},
URL = {https://doi.org/10.1215/00127094-2019-0085}
}

@article{song2023image,
    author = {Song, Lei and Sun, Hao},
    title = {{On the image of Hitchin morphism for algebraic surfaces: The case ${\rm GL}_n$}},
    journal = {International Mathematics Research Notices},
    volume = {2024},
    number = {1},
    pages = {492-514},
    year = {2023},
    month = {03},
    doi = {10.1093/imrn/rnad043},
    url = {https://doi.org/10.1093/imrn/rnad043},
}

@article{li2024functions,
  title={Functions on the commuting stack via {Langlands} duality},
  author={Li, Penghui and Nadler, David and Yun, Zhiwei},
  journal={Annals of Mathematics},
  volume={200},
  number={2},
  pages={609--748},
  year={2024},
  publisher={Department of Mathematics, Princeton University Princeton, New Jersey, USA}
}

@BOOK{hartshorne1977AG,
    author={Hartshorne, Robin},
    title = {Algebraic Geometry},
    publisher = {Springer-Verlag},
    year = {1977},
    volume = {52},
    series= {Graduate Texts in Mathematics}
}

@article{Hit87,
  title={Stable bundles and integrable systems},
  author={Hitchin, N},
  journal={Duke mathematical journal},
  volume={54},
  number={1},
  pages={91--114},
  year={1987}
}

@book{gortz2020algebraic,
  title={{Algebraic geometry I: schemes}},
  author={G{\"o}rtz, Ulrich and Wedhorn, Torsten},
  year={2020},
  publisher={Springer}
}

@article{hartshorne1980stable,
  title={Stable reflexive sheaves},
  author={Hartshorne, Robin},
  journal={Mathematische annalen},
  volume={254},
  number={2},
  pages={121--176},
  year={1980},
  publisher={Springer}
}

@article{HL24,
  title={On the spectral variety for rank two {H}iggs bundles},
  author={He, Siqi and Liu, Jie},
  journal={Proceedings of the London Mathematical Society},
  volume={129},
  number={5},
  pages={e70004},
  year={2024},
  publisher={Wiley Online Library}
}

@book{liu2006algebraic,
  title={Algebraic Geometry and Arithmetic Curves},
  author={Liu, Qing and Erne, Reinie},
  volume={6},
  year={2006},
  publisher={Oxford University Press}
}

@article{hausel2012prym,
  title={Prym varieties of spectral covers},
  author={Hausel, Tam{\'a}s and Pauly, Christian},
  journal={Geometry \& Topology},
  volume={16},
  number={3},
  pages={1609--1638},
  year={2012},
  publisher={Mathematical Sciences Publishers}
}

@article{Nit91,
  title={Moduli space of semistable pairs on a curve},
  author={Nitsure, Nitin},
  journal={Proceedings of the London Mathematical Society},
  volume={3},
  number={2},
  pages={275--300},
  year={1991},
  publisher={Oxford University Press}
}

@article{Sim94,
  title={{Moduli of representations of the fundamental group of a smooth projective variety. II}},
  author={Simpson, Carlos T},
  journal={Publications math{\'e}matiques de l'IH{\'E}S},
  volume={80},
  number={1},
  pages={5--79},
  year={1994},
  publisher={Springer Science and Business Media LLC}
}

@article{huynh2026hitchin,
  title={The {H}itchin morphism for certain surfaces fibered over a curve},
  author={Huynh, Matthew},
  journal={Pure and Applied Mathematics Quarterly},
  volume={21},
  number={6},
  pages={2257--2286},
  year={2026},
  publisher={International Press of Boston}
}

@article{patel2026stratifying,
  title={Stratifying moduli spaces of {H}iggs bundles and the {H}itchin morphism},
  author={Patel, Aryaman and Weissmann, Dario},
  journal={arXiv preprint arXiv:2601.08597},
  year={2026}
}

@article{patel2026hitchin,
  title={The {H}itchin morphism for {K}-trivial varieties},
  author={Patel, Aryaman and Weissmann, Dario},
  journal={arXiv preprint arXiv:2604.03217},
  year={2026}
}

\end{document}